\newtheorem{theorem}{Theorem}[section]
\newtheorem{lemma}[theorem]{Lemma}
\newtheorem{proposition}[theorem]{Proposition}
\newtheorem{definition}[theorem]{Definition}
\newtheorem{remark}[theorem]{Remark}
\newtheorem{corollary}[theorem]{Corollary}
\title{Polynomial \(D(4)\)-quadruples over Gaussian Integers}
\author{Marija Bliznac Trebje\v{s}anin \and Sanda Buja\v{c}i\'{c} Babi\'{c}}
\date{}
\begin{document}
\maketitle

\abstract{A set $\{a, b, c, d\}$ of four non-zero distinct polynomials in $\mathbb{Z}[i][X]$ is said to be a Diophantine $D(4)$-quadruple if the product of any two of its distinct elements increased by 4 is a square of some polynomial in $\mathbb{Z}[i][X]$.
In this paper we prove that every $D(4)$-quadruple in $\mathbb{Z}[i][X]$ is regular, or equivalently that the equation $$(a+b-c-d)^2=(ab+4)(cd+4)$$ holds for every $D(4)$-quadruple in $\mathbb{Z}[i][X]$.}

\medskip

\noindent Keywords:{ Diophantine $m$-tuples, Polynomials, Regular quadruples}

\noindent  {2020  Mathematics Subject Classification:} { 11D09, 11D45}



\section{Introduction}\label{intro}

\noindent Let $n$ be a nonzero integer. A set of $m$ distinct positive integers $\{a_1, a_2, \dots, a_m\}$ is called a Diophantine $m$-tuple with property $D(n)$, or simply a Diophantine $D(n)$-$m$-tuple, if $$a_i a_j +n$$ is a perfect square for all $1\leq i < j \leq m$. 

The Diophantine $D(1)$-$m$-tuple is simply called Diophantine $m$-tuple. Diophantus of Alexandria was the first  mathematician to deal with the problem of finding a set consisting of four distinct positive rational numbers such that the product of any two of them increased by $1$ is a square. The Diophantine quadruple $\{1, 3, 8, 120\}$ found by Fermat was the first such set of integers.  The most studied case is for $n=1$, but besides this case, the cases $n=-1$ and $n=4$ have also been studied in recent years. It is proved that there is no $D(-1)$-quadruple \cite{cipu}. It is useful to point out that the non-existence of $D(-1)$-quadruples implies that there are no $D(-4)$-quadruples \cite{duj}, too. Moreover, similar conjectures and observations can be made for the $n=4$ case if they hold for the $n=1$ case. 

A $D(4)$-pair $\{a,b\}$ can be extended with a larger element $c$ to form a $D(4)$-triple. The smallest such $c$ has the form $c=a+b+2r$, for $r=\sqrt{ab+4}$, and such a triple is often called a regular triple. It is easy to note that there are infinitely many extensions of a $D(4)$-pair to a $D(4)$-triple, and they can be studied by finding solutions to a Pellian equation $$bs^2-at^2=4(b-a),$$ where $s, t$ are positive integers given by $ac+4 = s^2, \ bc + 4 = t^2$, respectively.

One way of generalizing the presented concept is to introduce a polynomial $m$-tuple with property $D(n)$ or a polynomial $D(n)$-$m$-tuple. In this paper, we deal with the polynomial variant of the introduced problem for $n=4$ and polynomials in $\mathbb{Z}[i][X]$.
A similar problem was first studied by Jones for the set of the polynomials with integer coefficients and $n=1$ \cite{jones1, jones2}. 

For a start, we present some basic definitions, remarks and results that we use in our work.

\begin{definition}
Let $m\geq 2$ and let $R$ be a commutative ring with unity. Let $n\in R$ be a non-zero element and $\{a_1,a_2,\dots,a_m\}$ a set of $m$ distinct non-zero elements in $R$ such that $a_ia_j+n$ is a square of an element in $R$ for $1\leq i<j\leq m$. The set $\{a_1,a_2,\dots,a_m\}$ is called a Diophantine $m$-tuple with the property $D(n)$ or simply a $D(n)$-$m$-tuple in $R$.
\end{definition}

In the case where $R$ is a polynomial ring and $n$ is a constant polynomial, it is usually assumed that not all polynomials in such a $D(n)$-tuple are constant.

Let $\{a,b,c\}$ be a $D(4)$-triple in $\mathbb{Z}[i][X]$ such that 
\begin{equation}\label{jdbe_osnovne}
    ab+4=r^2,\ ac+4=s^2,\ bc+4=t^2,
\end{equation}
where $r,s,t\in \mathbb{Z}[i][X]$. 

\begin{definition}\label{expc}
A $D(4)$-triple $\{a,b,c\}$ in $\mathbb{Z}[i][X]$ is called regular if
$$(c-b-a)^2=4(ab+4).$$
Or, more explicitly, if
\begin{align}
    c&=c_{\pm}=a+b\pm 2r,\label{jdba_c_regularni}\\
    ac_{\pm}+4&=(a\pm r)^2,\ bc_{\pm}+4=(b\pm r)^2.\label{jdba_c_regularni1}
\end{align}
\end{definition}


As said before, besides regular extensions of a pair to a triple, it is not hard to find non-regular triples.

Similarly, there are extensions of a $D(4)$-triple $\{a,b,c\}$ to a $D(4)$-quadruple $\{a,b,c,d\}$ given by an explicit expression. 
\begin{definition}
A $D(4)$-quadruple $\{a,b,c,d\}$ in $\mathbb{Z}[i][X]$ is called regular if 
$$(a+b-c-d)^2=(ab+4)(cd+4),$$
or equivalently if 
\begin{equation}\label{jdba_d_pm}
    d=d_{\pm}=a+b+c+\frac{1}{2}(abc\pm rst).
\end{equation}
In this case, we have 
\begin{align}\label{uvw}
&ad_{\pm}+4 = \left(\frac{rs \pm at}{2}\right)^2 = u_{\pm}^2, \ \ bd_{\pm}+4 =\left( \frac{rt \pm bs}{2}\right)^2=v_{\pm}^2,\\  &cd_{\pm}+4 = \left(\frac{st \pm cr}{2}\right)^2=w_{\pm}^2.\nonumber
\end{align}

An irregular $D(4)$-quadruple in $\mathbb{Z}[i][X]$ is one that is not regular.
\end{definition}

We denote by $d_+$ the polynomial with the higher degree and by $d_-$ the polynomial with a lower degree among the two polynomials $d_{\pm}$. 

It is easy to check that the polynomials $abc \pm rst$ are divisible by $2$ in $\mathbb{Z}[i][X]$ to see that such polynomials exist and are well-defined by equation (\ref{jdba_d_pm}) for any $D(4)$-triple $\{a,b,c\}$.

\begin{remark}
There always exist regular $D(4)$-quadruples $\{a, b, c, d_{\pm}\}$, where $d_{\pm}$ is defined by (\ref{jdba_d_pm}).
Specifically, any $D(4)$-pair $\{a,b\}$ in $\mathbb{Z}[i][X]$ can be extended to regular $D(4)$-quadruples
\begin{equation}\label{jdba_par_do_cetvorke}
\{a,b,a+b\pm 2r,r(r\pm a)(b\pm r) \}.\end{equation}
\end{remark}
\medskip

If $\{a,b\}$ is a $D(4)$-pair in $\mathbb{C}[X]$ then $\{\frac{a}{2},\frac{b}{2}\}$ is a $D(1)$-pair in $\mathbb{C}[X]$. An important result now follows from \cite[Lemma 1]{dl_17}.

\begin{lemma}\label{lema_samo_jedan_konstantan}
Let $\{a_1,a_2,\dots,a_m\}$ be a $D(4)$-$m$-tuple  in $\mathbb{C}[X]$, such that not all $a_i$'s are constant polynomials. Then $a_i\neq a_j$ for $i\neq j$ and at most one of the polynomials $a_i$, $i=1,\dots,m$, is constant.  
\end{lemma}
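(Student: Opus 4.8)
The plan is to reduce the statement to its well-known $D(1)$ analogue, exactly as the paragraph preceding the lemma anticipates. Given a $D(4)$-$m$-tuple $\{a_1,\dots,a_m\}$ in $\mathbb{C}[X]$, I would pass to the scaled tuple $\{a_1/2,\dots,a_m/2\}$. For any $i\neq j$ one has $(a_i/2)(a_j/2)+1=\tfrac14(a_ia_j+4)$, and since $a_ia_j+4=p_{ij}^2$ for some $p_{ij}\in\mathbb{C}[X]$, this equals $(p_{ij}/2)^2$, again a square in $\mathbb{C}[X]$. Hence $\{a_1/2,\dots,a_m/2\}$ is a $D(1)$-$m$-tuple in $\mathbb{C}[X]$, and it contains a non-constant polynomial precisely because $\{a_1,\dots,a_m\}$ does. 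Applying \cite[Lemma 1]{dl_17} to this $D(1)$-$m$-tuple yields that its elements are pairwise distinct and that at most one of them is constant; multiplying back by $2$ — a bijection of $\mathbb{C}[X]$ that preserves both distinctness and the property of being constant, since $2$ is a unit in $\mathbb{C}$ — transfers the conclusion to $\{a_1,\dots,a_m\}$.

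There is essentially no obstacle here: the only things to verify are that scaling by a nonzero constant preserves squares and the (non-)constancy of polynomials, which is immediate. The sole point requiring a sentence of care is that \cite[Lemma 1]{dl_17} be available in the stated generality (over $\mathbb{C}[X]$, equivalently over an algebraically closed field of characteristic $0$), which is the setting in which it is proved; all of the actual substance of the lemma lies in that cited result.

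For a self-contained argument one could instead proceed directly, and I would sketch it as follows. If two of the polynomials coincide, say $a_i=a_j$ with $a_i$ non-constant, then $a_i^2+4=p^2$ gives $(p-a_i)(p+a_i)=4$, so both factors are constant and hence $a_i$ is constant, a contradiction. If $a_i$ and $a_j$ were distinct constants $\alpha,\beta$, pick a non-constant element $a_k$ (which exists by hypothesis); from $\alpha a_k+4=s^2$ and $\beta a_k+4=t^2$ one gets $(\sqrt{\beta}\,s-\sqrt{\alpha}\,t)(\sqrt{\beta}\,s+\sqrt{\alpha}\,t)=4(\beta-\alpha)$, a nonzero constant, forcing $s$ (and likewise $t$) to be constant and therefore $a_k$ constant — again a contradiction. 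The degenerate situations ($m=2$, or a repeated constant, for which the pairwise product condition is not imposed) reduce to the same computations or are ruled out directly by the assumption that not all $a_i$ are constant. In the write-up, however, I would simply invoke \cite[Lemma 1]{dl_17} via the scaling reduction above, as the text already sets up.
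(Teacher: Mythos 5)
Your proposal is correct and follows exactly the route the paper takes: it rescales the $D(4)$-tuple by $\tfrac12$ to obtain a $D(1)$-tuple in $\mathbb{C}[X]$ and then invokes \cite[Lemma 1]{dl_17}, which is precisely what the sentence preceding the lemma in the paper sets up. The additional self-contained sketch via the factorizations $(p-a_i)(p+a_i)=4$ and $(\sqrt{\beta}\,s-\sqrt{\alpha}\,t)(\sqrt{\beta}\,s+\sqrt{\alpha}\,t)=4(\beta-\alpha)$ is a correct bonus but not needed.
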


The main result of our work is the following theorem. 

\begin{theorem}\label{tm_glavni}
Every $D(4)$-quadruple in $\mathbb{Z}[i][X]$ is regular. 
\end{theorem}

To prove Theorem \ref{tm_glavni}, we use some methods introduced in \cite{glavni} and \cite{dl_17}, but some different approaches and strategies were needed to prove the main theorem in all possible cases. Since this topic has not been extensively studied for the case $n=4$, we also proved and joined some essential results from \cite{dujjur} and \cite{dl_17}.

As usual, we first deal with the system of simultaneous Pellian equations and then find the intersection of the binary recurrent sequences thus generated. The proof is done by using congruence relations and the gap principle.


A related problem of the regularity of the polynomial $D(-4;4)$-quadruple over $\mathbb{Z}[X]$ can be considered because Theorem \ref{tm_glavni} allows us to prove the analogous result of \cite{bfj} without additional conditions. Before we can highlight this important consequence of our main result, we need the following definition. 

\begin{definition}
A set $\{a,b,c,d\}$ of four non-zero distinct polynomials in $\mathbb{Z}[X]$ is said to have $D(-4;4)$ property, or that it is a polynomial $D(-4;4)$-quadruple if $\{a,b,c\}$ is a $D(-4)$-triple in $\mathbb{Z}[X]$ and $$ad+4, \quad bd+4, \quad  cd+4$$ are all squares of some polynomials in $\mathbb{Z}[X]$. 
\end{definition}

An analogous problem on $D(-1;1)$-quadruples was considered in \cite{bfj} and in \cite{glavni} the authors showed that the assertion about the regularity of $D(-1;1)$-quadruples follows as a corollary of their main result. We carry out the corresponding result in our case.

\begin{corollary}\label{kor1}
Every polynomial $D(-4;4)$-quadruple in $\mathbb{Z}[X]$ is regular. More precisely, any $D(-4)$-triple $\{a,b,c\}$ in $\mathbb{Z}[X]$ can be extended to a $D(-4;4)$-quadruple $\{a,b,c,d\}$ in $\mathbb{Z}[X]$ only with
$$d=d_{\pm}=-(a+b+c)+\frac{1}{2}(abc\pm r's't'),$$
where $(r')^2=ab-4$, $(s')^2=ac-4$ and $(t')^2=bc-4$.
\end{corollary}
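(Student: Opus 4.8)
The plan is to reduce the $D(-4;4)$ statement over $\mathbb{Z}[X]$ to the $D(4)$ statement over $\mathbb{Z}[i][X]$ established in Theorem \ref{tm_glavni}, via the substitution $X \mapsto iX$ (or, more precisely, by multiplying the triple elements by $i$). Concretely, suppose $\{a,b,c\}$ is a $D(-4)$-triple in $\mathbb{Z}[X]$, so that $ab-4=(r')^2$, $ac-4=(s')^2$, $bc-4=(t')^2$ for some $r',s',t'\in\mathbb{Z}[X]$, and suppose $d\in\mathbb{Z}[X]$ extends it to a $D(-4;4)$-quadruple, i.e. $ad+4$, $bd+4$, $cd+4$ are squares in $\mathbb{Z}[X]$. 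First I would consider the polynomials $a^* = ia$, $b^* = ib$, $c^* = ic$, $d^* = -id$ in $\mathbb{Z}[i][X]$. Then $a^*b^* + 4 = -ab+4 = -(r')^2 = (ir')^2$, and likewise for the other two products among $a^*,b^*,c^*$, so $\{a^*,b^*,c^*\}$ is a $D(4)$-triple in $\mathbb{Z}[i][X]$; and $a^*d^* + 4 = ad+4$, $b^*d^*+4 = bd+4$, $c^*d^*+4 = cd+4$ are squares in $\mathbb{Z}[X]\subseteq\mathbb{Z}[i][X]$. Hence $\{a^*,b^*,c^*,d^*\}$ is a $D(4)$-quadruple in $\mathbb{Z}[i][X]$, provided the four polynomials are non-zero and distinct; distinctness and non-vanishing follow from the corresponding property of $\{a,b,c,d\}$ together with Lemma \ref{lema_samo_jedan_konstantan} when not all are constant, and the all-constant case is handled separately (or excluded by hypothesis as usual).

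Next I would invoke Theorem \ref{tm_glavni}: the quadruple $\{a^*,b^*,c^*,d^*\}$ is regular, so
\[
(a^* + b^* - c^* - d^*)^2 = (a^*b^* + 4)(c^*d^* + 4).
\]
Substituting $a^* = ia$, $b^*=ib$, $c^*=ic$, $d^* = -id$ gives $(i(a+b-c) + id)^2 = (ia+ib-ic-(-id))^2 = -\,(a+b-c+d)^2$ on the left, and $(a^*b^*+4)(c^*d^*+4) = (-ab+4)(cd+4)$ on the right; but we want this in terms of $d_{\pm} = -(a+b+c) + \tfrac12(abc \pm r's't')$. The cleaner route is to use the equivalent explicit form of regularity. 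From (\ref{jdba_d_pm}), $d^* = a^* + b^* + c^* + \tfrac12(a^*b^*c^* \pm r^*s^*t^*)$ where $r^* = ir'$, $s^* = is'$, $t^* = it'$ are the square roots chosen for the starred triple (after fixing signs consistently). Then $a^*+b^*+c^* = i(a+b+c)$, $a^*b^*c^* = i^3 abc = -i\,abc$, and $r^*s^*t^* = i^3 r's't' = -i\,r's't'$, so $d^* = i\big(a+b+c - \tfrac12(abc \pm r's't')\big) = -i\big(-(a+b+c) + \tfrac12(abc\pm r's't')\big)$. Since $d^* = -id$, this yields exactly $d = d_{\pm} = -(a+b+c) + \tfrac12(abc \pm r's't')$, as claimed.

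The only genuine subtlety — and the step I would treat most carefully — is the bookkeeping of square-root sign choices: Theorem \ref{tm_glavni} guarantees regularity for \emph{some} choice of signs in (\ref{jdba_d_pm}) relative to \emph{some} choice of $r,s,t$, and I must check that replacing $r,s,t$ by the compatible choices $r^* = \pm ir'$, etc., does not spoil the algebra. This is routine: changing the sign of, say, $r^*$ swaps the roles of $s^*t^*+c^*r^*$-type expressions and correspondingly permutes the $d_+ \leftrightarrow d_-$ labels, so after absorbing signs into the $\pm$ in the final formula the conclusion is unaffected. I would also remark, to match the phrasing of the corollary, that conversely any $d = d_\pm$ of the stated form does extend $\{a,b,c\}$ to a $D(-4;4)$-quadruple — this is just the image under $d \mapsto -id$ of the regular extension (\ref{uvw})--(\ref{jdba_d_pm}) and requires no new work. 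Thus Corollary \ref{kor1} follows, with no additional hypotheses beyond those already present in Theorem \ref{tm_glavni}.
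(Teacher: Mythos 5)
Your proposal is correct and follows essentially the same route as the paper: pass to $\{ia,ib,ic,-id\}$, observe this is a $D(4)$-quadruple in $\mathbb{Z}[i][X]$, apply Theorem \ref{tm_glavni}, and unwind the factors of $i$ to recover $d=-(a+b+c)+\frac{1}{2}(abc\pm r's't')$. The extra care you take with distinctness and sign bookkeeping is harmless and only makes explicit what the paper leaves implicit.
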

\begin{proof}
Let $\{a, b, c\}$ be a $D(-4)$-triple in $\mathbb{Z}[X]$. It is easy to see that in that case $\{ai,bi,ci\}$ is a $D(4)$-triple in $\mathbb{Z}[i][X]$. If there exists $d \in \mathbb{Z}[X]$ such that $ad+4=x^2$, for some $x\in\mathbb{Z}[X]$, then $(ai)\cdot (-di)+4=x^2$. So, $\{ai,bi,ci, -di\}$ is a $D(4)$-quadruple in $\mathbb{Z}[i][X]$. By Theorem \ref{tm_glavni} we must have
$$-di=ai+bi+ci+\frac{1}{2}(ai\cdot bi\cdot ci\pm r'i\cdot s'i\cdot t'i),$$
which yields
$
d=-(a+b+c)+\frac{1}{2}(abc\pm r's't').\hfill\qedhere
$
\end{proof}

\section{General properties and Pellian equations} 
\label{general_pell}
Let us deal with an arbitrary extension of a $D(4)$-triple $\{a, b, c\}$ in $\mathbb{Z}[i][X]$ to a $D(4)$-quadruple $\{a, b, c, d\}$ in $\mathbb{Z}[i][X]$. 
For a $D(4)$-quadruple $\{a,b,c,d\}$, there exist polynomials $x, y, z\in\mathbb{Z}[i][X]$ such that
\begin{equation}\label{jdb_jednakosti_za_d}
ad+4=x^2,\ bd+4=y^2,\ cd+4=z^2.
\end{equation}

Eliminating $d$ from (\ref{jdb_jednakosti_za_d}), we obtain the system of simultaneous Pellian equations
\begin{align}
    az^2-cx^2&=4(a-c),\label{jdba_pellova_prva}\\
    bz^2-cy^2&=4(b-c).\label{jdba_pellova_druga}
\end{align}

Let us denote by $\alpha,\ \beta,\ \gamma$ the degrees of the polynomials $a,\ b,\ c$, respectively. Without loss of generality, we will assume $0\leq \alpha \leq \beta \leq \gamma$ and $\beta,\gamma>0$, since by Lemma \ref{lema_samo_jedan_konstantan} only $a$ can be a constant polynomial. 

A modified version of \cite[Lemma 2.1]{fj_19} appears in the following Lemma \ref{lema_rj_pellove_jedn}. More precisely, after dealing with certain algebraic transformations for each situation, these two lemmas follow directly from \cite[Lemma 4]{dujjur}.

\begin{lemma}\label{lema_rj_pellove_jedn}
There exist solutions $(z_0,x_0)$ and $(z_1,y_1)$, $z_0,z_1,x_0,y_1\in\mathbb{Z}[i][X]$, of (\ref{jdba_pellova_prva}) and (\ref{jdba_pellova_druga}), respectively, such that
\begin{enumerate}[i)]
    \item \begin{equation}\label{deg1}    \deg(z_0)\leq \frac{3\gamma-\alpha}{4},   \quad \deg(x_0)\leq \frac{\alpha+\gamma}{4},
\end{equation}
\begin{equation}\label{ineq20}
    \deg(z_1)\leq \frac{3\gamma-\beta}{4}, \quad \deg(y_1)\leq \frac{\beta+\gamma}{4}. 
\end{equation}

\item There exist non-negative integers $m$ and $n$ such that
\begin{equation}
        \label{jdba_opce_rjesenje_PRVE}
        z\sqrt{a}+x\sqrt{c}=(z_0\sqrt{a}+x_0\sqrt{c})\left(\frac{s+\sqrt{ac}}{2}\right)^{2m},
\end{equation}
\begin{equation}
        \label{jdba_opce_rjesenje_DRUGE}
        z\sqrt{b}+y\sqrt{c}=(z_1\sqrt{b}+y_1\sqrt{c})\left(\frac{t+\sqrt{bc}}{2}\right)^{2n}.
\end{equation}
\end{enumerate}
\end{lemma}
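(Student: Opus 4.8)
The plan is to reduce both statements to the general theory of Pellian equations over $\mathbb{Z}[i][X]$ developed in \cite{dujjur}, after first rewriting each equation in a normalized form. Consider the first Pellian equation (\ref{jdba_pellova_prva}), namely $az^2 - cx^2 = 4(a-c)$. Multiplying through by $a$ and using $ac+4 = s^2$, one can complete the square: writing $Z = az$ and keeping $x$, the equation becomes $Z^2 - (ac)x^2 = 4a(a-c)$, i.e. $Z^2 - (s^2-4)x^2 = 4a(a-c)$. This is a Pell-type equation in $\mathbb{Z}[i][X]$ whose associated ``fundamental'' unit is $\frac{s+\sqrt{ac}}{2}$ (note $\left(\frac{s+\sqrt{ac}}{2}\right)\left(\frac{s-\sqrt{ac}}{2}\right) = \frac{s^2-ac}{4} = 1$, so this is genuinely a unit in the relevant quadratic extension). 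First I would verify that $\{a,c,\,a-c,\,s\}$ satisfies the hypotheses of \cite[Lemma 4]{dujjur}, reading off the degree bounds for a minimal solution $(z_0,x_0)$ from that lemma; an identical computation with $b$ in place of $a$ handles (\ref{jdba_pellova_druga}) and produces $(z_1,y_1)$. The degree inequalities (\ref{deg1}) and (\ref{ineq20}) should then be exactly what \cite[Lemma 4]{dujjur} outputs once the degrees $\alpha,\gamma$ (resp. $\beta,\gamma$) and $\deg s = \tfrac{\alpha+\gamma}{2}$ (resp. $\deg t = \tfrac{\beta+\gamma}{2}$) are substituted.

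For part (ii), the structure theorem for solutions of a Pellian equation over a polynomial ring says that every solution $(z,x)$ is obtained from one in a finite set of ``classes'' by multiplying by a power of the fundamental unit; so $z\sqrt{a}+x\sqrt{c} = (z^*\sqrt{a}+x^*\sqrt{c})\left(\frac{s+\sqrt{ac}}{2}\right)^{k}$ for some class representative $(z^*,x^*)$ and some integer $k$. The two points that need care are: (a) that the exponent can be taken \emph{even}, $k = 2m$ with $m\ge 0$ — this follows because replacing $(z,x)$ by the companion solution obtained from the conjugate unit, together with sign changes $x\mapsto -x$, lets one absorb one factor of $\frac{s+\sqrt{ac}}{2}$ and flip the sign of the exponent, so every solution class is represented with an even, non-negative power; and (b) that the minimal class representative $(z_0,x_0)$ satisfying the degree bound from (i) is the one that appears, which is where the degree estimate from \cite[Lemma 4]{dujjur} is used to pin down $(z^*,x^*) = (z_0,x_0)$. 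The same argument applied to (\ref{jdba_pellova_druga}) with the unit $\frac{t+\sqrt{bc}}{2}$ gives (\ref{jdba_opce_rjesenje_DRUGE}) with exponent $2n$, $n\ge 0$.

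The main obstacle I anticipate is bookkeeping rather than conceptual: one must carefully match the normalization in \cite[Lemma 4]{dujjur} (which is presumably stated for an equation of a particular shape, perhaps over $\mathbb{Z}[X]$ or $\mathbb{C}[X]$) to our equations over $\mathbb{Z}[i][X]$, checking that the ring-theoretic hypotheses (the relevant square roots generating a genuine quadratic extension, the coefficient $a-c$ being nonzero of the expected degree, no spurious common factors) all hold in the Gaussian-integer polynomial setting. A secondary subtlety is ensuring that $\deg(a-c) = \gamma$ and $\deg(b-c) = \gamma$ in the generic case $\alpha<\gamma$, $\beta<\gamma$, and separately handling the borderline degenerate cases (e.g. if leading coefficients of $c$ and $a$ or $b$ coincide so that cancellation occurs) — though since we only need upper bounds on $\deg z_0, \deg x_0$ etc., a drop in $\deg(a-c)$ only improves the estimates. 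Once the reduction to \cite[Lemma 4]{dujjur} is set up cleanly, both (i) and (ii) are immediate consequences.
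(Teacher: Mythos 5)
Your proposal is correct and follows essentially the same route as the paper, which gives no detailed proof but states that the lemma "follows directly from [Lemma 4, dujjur] after certain algebraic transformations" — precisely the normalization-and-citation strategy you describe, including the standard argument for choosing a class representative so that the exponent is even and non-negative.
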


According to Lemma \ref{lema_rj_pellove_jedn}, there exist $d_0, d_1\in\mathbb{Z}[i][X]$ such that 
\begin{equation}\label{d0}
    ad_0+4 = x_0^2 \ \ \textrm{and} \ \ cd_0+4 = z_0^2,
\end{equation}
and
\begin{equation}\label{d1}
    bd_1+4 = y_1^2 \ \ \textrm{and} \ \ cd_1+4 = z_1^2.
\end{equation}

From (\ref{jdba_opce_rjesenje_PRVE}) we generally have 
$$z_{m+1}\sqrt{a}+x_{m+1}\sqrt{c} = (z_m\sqrt{a}+x_m\sqrt{c})\frac{ac-2+s\sqrt{ac}}{2}, \ \ m\geq 0.$$

After some elementary transformations 
$$z_{m+1} = \frac{z_m(ac-2)+x_mcs}{2}, \ \ x_{m+1} = \frac{x_m(ac-2)+z_mas}{2}$$
are obtained. For
$z=v_m=w_n$ for some $m,n\geq 0$, binary recurrence sequences $(v_m)_{m\geq0}$ and $(w_n)_{n\geq0}$ are  given by
\begin{align}
    &v_0=z_0,\quad v_1=\frac{1}{2}(sz_0+cx_0),\quad v_{m+2}=sv_{m+1}-v_m,\label{rekurzija_vm}\\
    &w_0=z_1,\quad w_1=\frac{1}{2}(tz_1+cy_1),\quad w_{n+2}=tw_{n+1}-w_n.\label{rekurzija_wn}
\end{align}

Because we are dealing with polynomials over Gaussian integers, it remains to check if $v_1, w_1 \in\mathbb{Z}[i][X]$.
By (\ref{jdbe_osnovne}) and (\ref{d0}), we get that $4|(z_0(ac-2)+x_0cs)(z_0(ac-2)-x_0cs)$ and then we conclude that $2$ divides each of the factors, so $v_1 \in\mathbb{Z}[i][X]$. The proof is analogous for $w_1 \in\mathbb{Z}[i][X]$.

Since the proofs of the next two lemmas are similar to those in \cite[Lemma 5]{dl_17}, we omit them.

\begin{lemma}\label{kongruencije_rj}Assume that $(z, x), (z', y')$ are solutions of (\ref{jdba_pellova_prva}) and (\ref{jdba_pellova_druga}), respectively. If $z^2 \equiv 4 \pmod{c}$, then $x^2 \equiv 4 \pmod{a}$ and the same is satisfied for $(z_0, x_0)$ solution of (\ref{jdba_pellova_prva}). If $z'^2 \equiv 4 \pmod{c}$ holds, then $y'^2 \equiv 4 \pmod{b}$ and the same is satisfied for $(z_1, y_1)$ solution of (\ref{jdba_pellova_druga}).  Additionally, if $z_0, z_1$ are not constant polynomials, then $\deg(z_0), \deg(z_1) \geq \frac{\gamma}{2}$.
\end{lemma}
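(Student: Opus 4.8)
The statement to prove is Lemma \ref{kongruencije_rj}, which asserts congruence relations linking solutions of the two Pellian equations modulo $c$, $a$, $b$, plus a degree lower bound on $z_0, z_1$.

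\medskip

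\noindent\textbf{Proof proposal.}

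The plan is to work directly from the Pellian equations (\ref{jdba_pellova_prva})--(\ref{jdba_pellova_druga}) and the fundamental relations (\ref{jdbe_osnovne}), exploiting the fact that $\mathbb{Z}[i][X]$ is a UFD so that divisibility and congruences behave well. First I would handle the congruence statements. Suppose $(z,x)$ solves $az^2 - cx^2 = 4(a-c)$ and assume $z^2 \equiv 4 \pmod c$. Rewriting the Pellian equation as $az^2 - cx^2 = 4a - 4c$, i.e. $a(z^2-4) = c(x^2-4)$, we see that $c \mid a(z^2-4)$; since $z^2 \equiv 4 \pmod c$ this is automatic, but more usefully the same identity gives $a \mid c(x^2-4)$, hence $a(z^2-4) = c(x^2-4)$ shows $a \mid c(x^2-4)$. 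To pass from this to $x^2 \equiv 4 \pmod a$ I would use $\gcd(a,c)$: from $ac + 4 = s^2$ we get that any common prime factor $p$ of $a$ and $c$ would divide $4 = s^2 - ac$, and then $p \mid s$, forcing $p^2 \mid 4$, which bounds the possible common factors; combined with the identity $a(z^2-4)=c(x^2-4)$ one deduces $a \mid (x^2-4)$ after accounting for the (bounded, unit-like) common factor. The cleanest route is: since $ac+4=s^2$, $a$ and $c$ are coprime up to the divisors of $4$; writing everything out, $a \mid c(x^2-4)$ together with near-coprimality yields $x^2 \equiv 4 \pmod a$. The same argument applied to (\ref{jdba_pellova_druga}) and $bc+4=t^2$ gives the $(z',y')$ statement. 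Applying this to the specific fundamental solutions $(z_0,x_0)$ and $(z_1,y_1)$ is then immediate once one checks $z_0^2 \equiv 4 \pmod c$, which follows from (\ref{d0}): indeed $cd_0 + 4 = z_0^2$ gives $z_0^2 \equiv 4 \pmod c$ outright, and similarly $z_1^2 \equiv 4 \pmod c$ from (\ref{d1}).

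\medskip

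For the degree bound, suppose $z_0$ is not constant. From $cd_0 + 4 = z_0^2$ in (\ref{d0}) we have $z_0^2 - 4 = c d_0$, so $c \mid z_0^2 - 4 = (z_0-2)(z_0+2)$. I would argue that $c$ cannot divide $z_0 - 2$ alone nor $z_0 + 2$ alone unless $z_0$ has degree at least $\gamma$ (since a nonconstant $z_0$ with $\deg z_0 < \gamma$ cannot have $c$ dividing $z_0 \pm 2$), and in the mixed case where $c$ splits between the two factors, the shared part is again controlled by $\gcd(z_0-2,z_0+2)$, which divides $4$ and hence has degree $0$. Therefore, up to the degree-$0$ part, $c$ divides a single one of $z_0 \pm 2$, forcing $\deg(z_0 \pm 2) \geq \gamma$ or else a more careful split; in any event $\deg(z_0^2 - 4) \geq \gamma$, so $2\deg z_0 \geq \gamma$, i.e. $\deg z_0 \geq \gamma/2$. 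The identical computation with (\ref{d1}) handles $z_1$. One should double-check the borderline possibility that $z_0^2 = 4$ identically (then $z_0$ constant, excluded by hypothesis) or $d_0 = 0$; these degenerate cases are ruled out by the standing assumptions on the quadruple being genuine and Lemma \ref{lema_samo_jedan_konstantan}.

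\medskip

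The main obstacle I anticipate is the book-keeping around the "near-coprimality" of $a$ and $c$ (and of $b$ and $c$): over $\mathbb{Z}[i][X]$ a common divisor of $a$ and $c$ must divide $4$, but $4 = -(1+i)^4 \cdot (\text{unit})$ is not a unit, so $a$ and $c$ can share a constant factor dividing $4$. One must track this carefully to ensure it does not break the divisibility deduction $a \mid (x^2-4)$ or spoil the degree estimate — though since all such shared factors are constants (degree $0$), they affect none of the degree inequalities and only force minor care in the exact-divisibility step of the congruences. This is exactly the kind of subtlety the paper defers to \cite[Lemma 5]{dl_17}, which is why the authors state the proof is "similar" and omit it; the argument over $\mathbb{Z}[i][X]$ differs from $\mathbb{Z}[X]$ only in replacing "$a,c$ coprime" by "$a,c$ coprime up to constants dividing $4$," and none of the degree-based conclusions are sensitive to that replacement.
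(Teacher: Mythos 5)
Your plan is the standard one behind the omitted proof, and most of it is sound, but the key step deriving $x^2 \equiv 4 \pmod{a}$ is carried out by a route that does not actually close. From $a(z^2-4)=c(x^2-4)$ you pass to $a \mid c(x^2-4)$ and then invoke "near-coprimality" of $a$ and $c$. Writing $d=\gcd(a,c)$, which divides $4$ and is a constant, this argument only yields $(a/d)\mid (x^2-4)$; since $d$ may be a non-unit constant of $\mathbb{Z}[i]$ (for instance $2$ or $1+i$), that is strictly weaker than $a\mid(x^2-4)$, and no amount of bookkeeping about divisors of $4$ recovers the missing constant factor from this direction. The correct step is immediate from the identity you already wrote down and bypasses gcd considerations entirely: the hypothesis $z^2\equiv 4\pmod{c}$ means $z^2-4=ce$ for some $e\in\mathbb{Z}[i][X]$, and substituting into $a(z^2-4)=c(x^2-4)$ gives $ace=c(x^2-4)$; cancelling the nonzero polynomial $c$ in the integral domain $\mathbb{Z}[i][X]$ yields $x^2-4=ae$, hence $x^2\equiv 4\pmod{a}$ exactly, with the same $e$ playing the role of $d_0$. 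The identical one-line argument with (\ref{jdba_pellova_druga}) and $b(z'^2-4)=c(y'^2-4)$ handles the second assertion.

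The remainder of your proposal is fine. Applying the statement to $(z_0,x_0)$ and $(z_1,y_1)$ via (\ref{d0}) and (\ref{d1}) is consistent with the paper's own logical ordering, since those relations are presented as consequences of Lemma \ref{lema_rj_pellove_jedn}. For the degree bound, the detour about whether $c$ divides $z_0-2$ or $z_0+2$ individually is unnecessary (and your claim that $c$ must essentially divide a single factor is not needed): the only facts required are that a nonconstant $z_0$ has $z_0^2-4\neq 0$ and $\deg(z_0^2-4)=2\deg(z_0)$, so $c\mid z_0^2-4$ forces $2\deg(z_0)\geq\gamma$ directly, and likewise for $z_1$. With the cancellation step substituted for the coprimality step, the proof is complete and coincides with the argument of the cited lemma of Dujella and Luca that the authors omit.
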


\begin{lemma}\label{lema_stupnjevi}
Let $\{a,b,c,d\}$ be a $D(4)$-quadruple in $\mathbb{Z}[i][X]$ and $(v_m)_{m\geq0}$ and $(w_n)_{n\geq0}$ be sequences defined as in (\ref{rekurzija_vm}) and (\ref{rekurzija_wn}), respectively. Then for $m\geq 1$ and $n\geq 1$ the following holds
\begin{equation}\label{degvm}
    \deg(v_m)=(m-1)\frac{\alpha+\gamma}{2}+\deg(v_1),
\end{equation}
\begin{equation}\label{degwn}
    \deg(w_n)=(n-1)\frac{\beta+\gamma}{2}+\deg(w_1).
\end{equation}
Also, 
\begin{equation}\label{degv1}
\frac{\gamma}{2}\leq \deg(v_1)\leq \frac{\alpha+5\gamma}{4},
\end{equation}
\begin{equation}\label{degw1}
    \frac{\gamma}{2}\leq \deg(w_1)\leq \frac{\beta+5\gamma}{4}.
\end{equation}
\end{lemma}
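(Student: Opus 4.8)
The plan is to prove the statements about $(v_m)_{m\ge0}$; those about $(w_n)_{n\ge0}$ follow by the symmetric argument, with $a,s,x_0,z_0,d_0$ replaced by $b,t,y_1,z_1,d_1$ and (\ref{jdba_pellova_druga}), (\ref{d1}), (\ref{ineq20}) used in place of (\ref{jdba_pellova_prva}), (\ref{d0}), (\ref{deg1}). I would first establish the bounds in (\ref{degv1}), since the lower one is needed to start the induction for (\ref{degvm}). Because $\deg(ac+4)=\alpha+\gamma>0$ we have $\deg(s)=\frac{\alpha+\gamma}{2}$; combining $v_1=\frac12(sz_0+cx_0)$ with the degree bounds of Lemma \ref{lema_rj_pellove_jedn} gives $\deg(sz_0)\le\frac{\alpha+\gamma}{2}+\frac{3\gamma-\alpha}{4}=\frac{\alpha+5\gamma}{4}$ and $\deg(cx_0)\le\gamma+\frac{\alpha+\gamma}{4}=\frac{\alpha+5\gamma}{4}$, so $\deg(v_1)\le\frac{\alpha+5\gamma}{4}$.

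For the lower bound I would use $s^2=ac+4$, the relations (\ref{d0}), and the fact that $(z_0,x_0)$ solves (\ref{jdba_pellova_prva}) to derive the identity
$$v_1^2-4=c\bigl(d_0+a-c+x_0v_1\bigr),$$
so that $c\mid v_1^2-4$ in $\mathbb{Z}[i][X]$. If $v_1$ is non-constant, then $v_1^2-4\neq0$, hence $2\deg(v_1)=\deg(v_1^2-4)\ge\deg(c)=\gamma$, which is the claimed inequality. It remains to exclude $v_1$ constant: then the identity forces $v_1^2=4$ (otherwise the non-constant $c$ would divide a nonzero constant) and $d_0+a-c+x_0v_1=0$, namely $v_1=\pm2$ and $d_0=c-a\mp2x_0$; substituting into $ad_0+4=x_0^2$ yields $(x_0\pm a)^2=s^2$, so $\deg(x_0)=\deg(s\mp a)$. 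When $\gamma>\alpha$ this is $\deg(s)=\frac{\alpha+\gamma}{2}>\frac{\alpha+\gamma}{4}$, contradicting (\ref{deg1}); the only remaining case $\alpha=\beta=\gamma$ I would exclude by a finer analysis of the leading coefficients of $a,c,s$ (using also the squareness of $ab+4$ and $bc+4$), as in \cite[Lemma 5]{dl_17}.

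With $\deg(v_1)\ge\frac{\gamma}{2}$ in hand I would prove (\ref{degvm}) by induction on $m$ using $v_{m+2}=sv_{m+1}-v_m$. Since $\deg(sv_1)=\frac{\alpha+\gamma}{2}+\deg(v_1)\ge\frac{\alpha+2\gamma}{2}>\frac{3\gamma-\alpha}{4}\ge\deg(v_0)$, the leading term of $sv_1$ is not cancelled in $v_2=sv_1-v_0$, so $\deg(v_2)=\frac{\alpha+\gamma}{2}+\deg(v_1)$ --- this is (\ref{degvm}) for $m=2$ and also shows $\deg(v_2)>\deg(v_1)$. Assuming (\ref{degvm}) for $m$ and $m+1$ with $m\ge1$, we get $\deg(sv_{m+1})=(m+1)\frac{\alpha+\gamma}{2}+\deg(v_1)>\deg(v_m)$, so no cancellation occurs in $v_{m+2}=sv_{m+1}-v_m$ and $\deg(v_{m+2})=(m+1)\frac{\alpha+\gamma}{2}+\deg(v_1)$, completing the induction; the same computation with $\deg(t)=\frac{\beta+\gamma}{2}$ and (\ref{rekurzija_wn}) gives (\ref{degwn}) and (\ref{degw1}).

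I expect the non-constancy of $v_1$ (and $w_1$) to be the main obstacle. Once $c\mid v_1^2-4$ is established the degree bound is immediate, but ruling out the degenerate value $v_1=\pm2$ is delicate exactly in the boundary regime $\alpha=\beta=\gamma$, where the crude estimate on $\deg(s\mp a)$ no longer beats the bound of Lemma \ref{lema_rj_pellove_jedn} and one must trace the forced cancellations through the leading coefficients; everything else is routine degree bookkeeping in the polynomial domain $\mathbb{Z}[i][X]$.
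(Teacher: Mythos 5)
The paper itself omits the proof of this lemma (deferring to \cite[Lemma 5]{dl_17}), and your reconstruction follows the expected route. The upper bounds, the identity $v_1^2-4=c\bigl(d_0+a-c+x_0v_1\bigr)$ (which checks out), the resulting divisibility $c\mid v_1^2-4$, the elimination of $v_1=\pm2$ when $\alpha<\gamma$ via $x_0=\pm s\mp a$ against (\ref{deg1}), and the induction on the recurrence (\ref{rekurzija_vm}) are all sound.

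The step you defer --- excluding $v_1=\pm2$ when $\alpha=\beta=\gamma$ ``by a finer analysis of the leading coefficients of $a,c,s$'' --- is a genuine gap, and it cannot be closed by leading-coefficient bookkeeping, because the degenerate situation really occurs for initial pairs satisfying the bounds of Lemma \ref{lema_rj_pellove_jedn}. Take $a=X^2$, $b=4X^2+8$, $c=X^2+4$, a $D(4)$-triple with $\alpha=\beta=\gamma=2$ and $r=2X^2+2$, $s=X^2+2$, $t=2X^2+6$. The pair $(z_0,x_0)=(2,-2)$ solves (\ref{jdba_pellova_prva}) and satisfies (\ref{deg1}), yet $v_1=\tfrac12(sz_0+cx_0)=s-c=-2$, so $\deg(v_1)=0<\gamma/2$ and (\ref{degv1}) fails for this sequence. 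Hence the lower bound in (\ref{degv1}) is not a property of an arbitrary admissible initial pair: to prove it one must use that the sequence in question is the one attached to an actual fourth element $d\notin\{0,a,b,c\}$ of the quadruple (i.e., that the solution $z$ with $cd+4=z^2$ sits at an even index $\geq 2$ of precisely this sequence), or else admit $v_1=\pm2$ as a separate case and re-verify the places where (\ref{degv1}) is invoked downstream (e.g.\ Lemma \ref{gap0}). In the regime $\alpha=\beta=\gamma$ Lemma \ref{degsc} forces $c=a+b\pm2r$, and the exclusion has to exploit this structure together with the companion equation for $b$; your promised analysis of the leading coefficients of $a,c,s$ alone is provably insufficient. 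Note that (\ref{degvm}) itself survives in the example --- no cancellation occurs in $v_2=sv_1-v_0$ since $\deg(sv_1)=2>0=\deg(v_0)$ --- so the damage is confined to, but real for, the lower bound on $\deg(v_1)$.
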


\noindent The proof of the next lemma is conducted similarly as in \cite[Lemma 1]{dujjur}.

\begin{lemma}\label{degineq}
For $v_m = w_n$, where $(v_m)_{m\geq0}$ and $(w_n)_{n\geq0}$ are defined by (\ref{rekurzija_vm}) and (\ref{rekurzija_wn}), we have $$n-1 \leq m \leq 2n+1.$$
\end{lemma}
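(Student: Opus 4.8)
The plan is to compare the degrees of $v_m$ and $w_n$ under the assumption $v_m = w_n$, using the exact degree formulas from Lemma \ref{lema_stupnjevi}. First I would record that, since $v_m = w_n$, taking degrees gives
\[
(m-1)\frac{\alpha+\gamma}{2}+\deg(v_1) = (n-1)\frac{\beta+\gamma}{2}+\deg(w_1),
\]
valid for $m,n\geq 1$; the small cases $m=0$ or $n=0$ should be handled separately and are easy because then one of the sequences contributes only $z_0$ or $z_1$, whose degree is controlled by Lemma \ref{lema_rj_pellove_jedn} and Lemma \ref{kongruencije_rj}. From the displayed identity, and using the bounds $\frac{\gamma}{2}\leq \deg(v_1)\leq \frac{\alpha+5\gamma}{4}$ and $\frac{\gamma}{2}\leq \deg(w_1)\leq \frac{\beta+5\gamma}{4}$ together with $0\leq\alpha\leq\beta\leq\gamma$, one derives a two-sided linear inequality relating $m$ and $n$.

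Concretely, for the upper bound $m\leq 2n+1$: rearrange the degree equality to isolate $(m-1)\frac{\alpha+\gamma}{2}$, bound $\deg(w_1)$ above by $\frac{\beta+5\gamma}{4}$ and $\deg(v_1)$ below by $\frac{\gamma}{2}$, so that
\[
(m-1)\frac{\alpha+\gamma}{2} \leq (n-1)\frac{\beta+\gamma}{2} + \frac{\beta+5\gamma}{4} - \frac{\gamma}{2}
= (n-1)\frac{\beta+\gamma}{2} + \frac{\beta+3\gamma}{4}.
\]
Since $\beta\leq\gamma$, the right side is at most $(n-1)\frac{\gamma+\gamma}{2}+\frac{\gamma+3\gamma}{4} = (n-1)\gamma+\gamma = n\gamma$, and also $\frac{\beta+\gamma}{2}\leq\gamma$. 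On the other hand $\frac{\alpha+\gamma}{2}\geq\frac{\gamma}{2}$ since $\alpha\geq 0$, so $(m-1)\frac{\gamma}{2}\leq n\gamma$, giving $m-1\leq 2n$, i.e. $m\leq 2n+1$ (I would double-check whether the cruder estimate $\frac{\alpha+\gamma}{2}\geq\frac{\gamma}{2}$ suffices or whether one needs $\beta\geq\alpha$ to tighten it; in either case the bound $2n+1$ comes out). For the lower bound $n-1\leq m$: symmetrically, rearrange to isolate $(n-1)\frac{\beta+\gamma}{2}$, bound $\deg(v_1)$ above by $\frac{\alpha+5\gamma}{4}$ and $\deg(w_1)$ below by $\frac{\gamma}{2}$, obtaining
\[
(n-1)\frac{\beta+\gamma}{2} \leq (m-1)\frac{\alpha+\gamma}{2} + \frac{\alpha+3\gamma}{4}
\leq (m-1)\frac{\beta+\gamma}{2} + \frac{\beta+\gamma}{2},
\]
where the last step uses $\alpha\leq\beta$ (hence $\frac{\alpha+\gamma}{2}\leq\frac{\beta+\gamma}{2}$ and $\frac{\alpha+3\gamma}{4}\leq\frac{\beta+3\gamma}{4}\leq\frac{\beta+\gamma}{2}+\frac{\gamma-\beta}{4}$; I would instead just bound $\frac{\alpha+3\gamma}{4}\leq\gamma\leq\frac{\beta+\gamma}{2}+\frac{\gamma-\beta}{2}$ cleanly). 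Dividing by $\frac{\beta+\gamma}{2}>0$ yields $n-1\leq m$.

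The main obstacle is bookkeeping with the inequalities: because the bounds on $\deg(v_1)$ and $\deg(w_1)$ in Lemma \ref{lema_stupnjevi} have a gap of width $\frac{\alpha+3\gamma}{4}$ and $\frac{\beta+3\gamma}{4}$ respectively, one has to be careful that the slack does not blow up the constant in $m\leq 2n+1$; the inequalities $0\leq\alpha\leq\beta\leq\gamma$ are exactly what keeps everything in check, and the degenerate possibility $\alpha=0$ (constant $a$) or $\beta=\gamma$ needs a quick separate glance to be sure no division by zero or reversal of inequality occurs. I would also separately dispose of the edge cases $m=0$ or $n=0$: if $m=0$ then $v_0=z_0=w_n$, and comparing $\deg(z_0)\leq\frac{3\gamma-\alpha}{4}$ with $\deg(w_n)$ forces $n\leq 1$, consistent with $n-1\leq 0\leq 2\cdot 0+1$; similarly for $n=0$. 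Collecting the two bounds gives $n-1\leq m\leq 2n+1$, which is the claim.
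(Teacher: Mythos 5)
Your approach --- equating the degree formulas of Lemma \ref{lema_stupnjevi} for $v_m$ and $w_n$ and bounding $\deg(v_1)$, $\deg(w_1)$ via (\ref{degv1}) and (\ref{degw1}) --- is exactly the degree-comparison argument the paper defers to (\cite[Lemma 1]{dujjur}); your upper bound $m\le 2n+1$ and your disposal of the edge cases $m=0$ or $n=0$ are correct. One step in your lower-bound chain is false as written: $\frac{\alpha+3\gamma}{4}\le\frac{\beta+\gamma}{2}$ is equivalent to $\alpha+\gamma\le 2\beta$, which fails for instance when $\alpha=\beta$ and $\gamma$ is large, so the displayed chain does not give $n-1\le m$ directly. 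The estimate that actually follows from $\alpha\le\beta\le\gamma$ and $\beta>0$ is $(n-m)\frac{\beta+\gamma}{2}\le\frac{\beta+3\gamma}{4}$, i.e. $n-m\le\frac{1}{2}\bigl(1+\frac{2\gamma}{\beta+\gamma}\bigr)<\frac{3}{2}$, and $n-1\le m$ then follows from the integrality of $m$ and $n$ --- a step your write-up gestures at in the parenthetical remark but never actually invokes. With that small repair the proof is complete and matches the intended argument.
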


Now we will state some congruence relations that hold for $v_{m}$ and $w_{n}$ and which will be essential for the final proof of Theorem \ref{tm_glavni}.

The following lemma is easily proved by induction so the proof is omitted.

\begin{lemma}\label{initial1}
The sequences $(v_m)_{m\geq0}$ and $(w_n)_{n\geq0}$, given by (\ref{rekurzija_vm}) and (\ref{rekurzija_wn}), respectively satisfy the following congruences
\begin{equation}\label{v2mc}
    v_{2m} \equiv z_0 \pmod{c}, \ \ v_{2m+1} \equiv v_1 \pmod{c},
\end{equation}
\begin{equation}\label{w2nc}
    w_{2n} \equiv z_1 \pmod{c}, \ \ w_{2n+1} \equiv w_1 \pmod{c}.
\end{equation}
\end{lemma}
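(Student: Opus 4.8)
The plan is to prove the four congruences by induction on $m$ (resp.\ $n$), using the recurrences (\ref{rekurzija_vm}) and (\ref{rekurzija_wn}) together with the fact that $s^2 = ac+4$, so that $s^2 \equiv 4 \pmod c$. By symmetry it suffices to treat the two congruences for $(v_m)$; the argument for $(w_n)$ is identical with $s,z_0,v_1$ replaced by $t,z_1,w_1$ and $ac+4$ by $bc+4$. I would actually prove a slightly stronger simultaneous statement, namely that modulo $c$ one has $v_{2m} \equiv z_0$ and $v_{2m+1} \equiv v_1$ \emph{and} $v_{2m+2} \equiv z_0$, so that a clean two-step induction closes up.

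First I would record the base cases: $v_0 = z_0$ and $v_1 = v_1$ trivially satisfy (\ref{v2mc}) for $m=0$. Next, the inductive step. Assume $v_{2m}\equiv z_0$ and $v_{2m+1}\equiv v_1 \pmod c$. From $v_{m+2} = s v_{m+1} - v_m$ we get $v_{2m+2} = s v_{2m+1} - v_{2m}$ and $v_{2m+3} = s v_{2m+2} - v_{2m+1}$. Reducing $v_{2m+2}$ directly modulo $c$ only gives $v_{2m+2} \equiv s v_1 - z_0$, which is not obviously $z_0$; the point is that one must iterate once more. Using $v_{2m+3} = s v_{2m+2} - v_{2m+1} = s(s v_{2m+1} - v_{2m}) - v_{2m+1} = (s^2-1)v_{2m+1} - s v_{2m}$ and $s^2 \equiv 4 \pmod c$, this becomes $v_{2m+3} \equiv 3 v_1 - s z_0 \pmod c$. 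To identify this I would compute $v_1$ and the relevant low-index terms explicitly from (\ref{rekurzija_vm}): $v_1 = \tfrac12(s z_0 + c x_0) \equiv \tfrac{1}{2} s z_0$ is not directly usable over $\mathbb{Z}[i][X]$, so instead I would argue at the level of the sequence, noting $v_2 = s v_1 - v_0 \equiv s v_1 - z_0$ and $v_3 = s v_2 - v_1 = (s^2-1)v_1 - s v_0 \equiv 3 v_1 - s z_0 \pmod c$, and then check that this two-term pattern $(z_0, v_1, s v_1 - z_0, 3 v_1 - s z_0, \dots)$ is governed purely by the linear recurrence $v_{k+2} \equiv s v_{k+1} - v_k$ together with $s^2 \equiv 4$, which forces periodicity of the parities exactly as claimed. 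Concretely: the map $(v_{k}, v_{k+1}) \mapsto (v_{k+1}, s v_{k+1} - v_k)$ modulo $c$, restricted to the subspace relations implied by $s^2\equiv 4$, sends $(z_0,v_1)$ after two applications to $(z_0', v_1')$ with $z_0' \equiv z_0$ and $v_1' \equiv v_1$; this is the computation $v_{2m+2} \equiv z_0$, $v_{2m+3}\equiv v_1 \pmod c$ that I need, and it follows by substituting $s^2 = ac+4$ and simplifying, since the $ac$ terms are $\equiv 0 \pmod c$.

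The only mild subtlety — and the step I would be most careful about — is that we work over $\mathbb{Z}[i][X]$ rather than a field, so I cannot divide by $2$ when reducing $v_1 = \tfrac12(sz_0 + cx_0)$; every manipulation must stay polynomial. This is handled by never reducing $v_1$ further: we treat $v_1$ as a fixed element of $\mathbb{Z}[i][X]$ (its membership there having been established in the paragraph following (\ref{rekurzija_vm})) and only ever reduce the \emph{combinations} $v_{2m+2} = s v_{2m+1} - v_{2m}$ and $v_{2m+3} = s v_{2m+2} - v_{2m+1}$, in which the coefficient $s^2 = ac+4 \equiv 4 \pmod c$ appears with integer coefficients. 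No other obstacle arises, and the induction is otherwise entirely routine, which is why the authors (and we) may legitimately omit the full computation.
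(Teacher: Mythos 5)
The paper itself omits the proof (declaring it "easily proved by induction"), and your two\-/step induction with $s^2\equiv 4\pmod c$ is the right skeleton; your odd\-/index step is in fact clean, since $v_{2m+3}\equiv (s^2-1)v_1-sz_0\equiv 3v_1-sz_0=v_1+(2v_1-sz_0)=v_1+cx_0\equiv v_1\pmod c$ uses only polynomial identities. However, there is a genuine gap at the even\-/index step, exactly where you claim the subtlety is "handled." You need $v_{2m+2}\equiv sv_1-z_0\equiv z_0\pmod c$, i.e.\ $c\mid sv_1-2z_0$. Computing with $2v_1=sz_0+cx_0$ gives
\begin{equation*}
2(sv_1-2z_0)=s^2z_0+scx_0-4z_0=c(az_0+sx_0),\qquad\text{i.e.}\qquad sv_1-2z_0=\tfrac12\,c\,(az_0+sx_0).
\end{equation*}
Knowing that the left side lies in $\mathbb{Z}[i][X]$ does not give divisibility by $c$: since $2$ is not a unit in $\mathbb{Z}[i][X]$ and $c$ itself may have even content (the paper's own example has $c=2i(X^2+2X+2)$), the relation $c\mid 2P$ does not imply $c\mid P$. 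Your stated remedy --- "only ever reduce combinations in which $s^2\equiv 4$ appears with integer coefficients" --- does not address this, because the problematic combination $sv_1-2z_0$ is precisely such a combination and the obstruction sits in the denominator $2$ multiplying $c$, not in the $s^2$ substitution.

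What is actually needed is the integrality of the companion half\-/step $x_1=\tfrac12(az_0+sx_0)$, i.e.\ that $2\mid az_0+sx_0$ in $\mathbb{Z}[i][X]$. This is true but requires an argument of the same kind the paper uses to show $v_1\in\mathbb{Z}[i][X]$: from $(az_0+sx_0)(az_0-sx_0)=a^2z_0^2-s^2x_0^2=4(a^2-ac-ad_0-4)$ one sees that $(1+i)^4$ divides a product of two factors congruent modulo $(1+i)^2$, whence $(1+i)^2=2i$ divides each factor. With $x_1\in\mathbb{Z}[i][X]$ one gets $v_{m+2}-v_m=c\,x^{(v)}_{m+1}$ for the companion sequence $x^{(v)}_{m+2}=sx^{(v)}_{m+1}-x^{(v)}_m$ (all of whose terms are then in $\mathbb{Z}[i][X]$ by the integer recurrence), and the lemma follows at once. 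You should add this divisibility step; without it the even\-/index congruence is not established over $\mathbb{Z}[i][X]$.
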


The next lemma is proven in \cite[Lemma 6]{ff} for polynomials in $\mathbb{Z}[X]$, but the same arguments are true for $D(4)$-quadruples in $\mathbb{Z}[i][X]$.
\begin{lemma}\label{lemma 5.1}
Let the sequences $(v_m)_{m\geq0}$ and $(v_n)_{n\geq0}$ be given by (\ref{rekurzija_vm}) and (\ref{rekurzija_wn}). Then,
\begin{align*}
    v_{2m}&\equiv z_0+\frac{1}{2}c(az_0m^2+sx_0m)\ (\bmod \ c^2),\\
    2v_{2m+1}&\equiv sz_0+c\left(\frac{1}{2}asz_0m(m+1)+x_0(2m+1)\right)\ (\bmod \ c^2),
    \\
    w_{2n}&\equiv z_1+\frac{1}{2}c(bz_1n^2+ty_1n)\ (\bmod \ c^2),\\
    2w_{2n+1}&\equiv tz_1+c\left(\frac{1}{2}btz_1n(n+1)+y_1(2n+1)\right)\ (\bmod \ c^2).
\end{align*}

\end{lemma}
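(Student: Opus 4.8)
The plan is to prove the four congruences by induction (on $m$, and symmetrically on $n$), deriving the two statements about $(v_m)$ together and then obtaining the two about $(w_n)$ by the substitution $(v,s,z_0,x_0,a)\leftrightarrow(w,t,z_1,y_1,b)$. The base case $m=0$ is immediate: the first $v$-formula reduces to $v_0=z_0$ and the second to $2v_1=sz_0+cx_0$, which is the definition in (\ref{rekurzija_vm}); the $w$-base cases are identical with (\ref{rekurzija_wn}).

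For the inductive step I would apply the recurrence $v_{m+2}=sv_{m+1}-v_m$ twice, writing first $v_{2m+2}=sv_{2m+1}-v_{2m}$ and then $v_{2m+3}=sv_{2m+2}-v_{2m+1}$. Substituting the two induction hypotheses and, crucially, replacing every occurrence of $s^2$ by $ac+4$ from (\ref{jdbe_osnovne}), one sees that the ``$ac$'' part of each such replacement always carries an extra factor $c$ and hence vanishes modulo $c^2$, while the ``$+4$'' part either merges with the subtracted term (restoring the leading coefficient $z_0$, respectively $sz_0$) or feeds into the coefficient of $c$. Collecting the coefficient of $c$ and simplifying with the elementary identities $1+2m(m+1)-m^2=(m+1)^2$ and $(m+1)^2-\tfrac12 m(m+1)=\tfrac12(m+1)(m+2)$, it rearranges exactly into $\tfrac12\bigl(az_0(m+1)^2+sx_0(m+1)\bigr)$, respectively $\tfrac12 asz_0(m+1)(m+2)+x_0\bigl(2(m+1)+1\bigr)$ --- that is, the asserted formulas with $m$ replaced by $m+1$. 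The computation for $(w_n)$ is word-for-word the same, using $t^2=bc+4$ in place of $s^2=ac+4$.

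The one point I would treat with a little care is the passage from these ``doubled'' identities to the statements for $v_{2m}$ and $w_{2n}$ themselves, since $2$ need not be cancellable modulo $c^2$ in $\mathbb{Z}[i][X]$; I would handle this exactly as in the $\mathbb{Z}[X]$ setting of \cite{ff}, invoking Lemma \ref{initial1} (which already gives $c\mid v_{2m}-z_0$ and $c\mid 2v_{2m+1}-sz_0$, and similarly for $w$) to reduce everything to a congruence modulo $c$ for the corresponding quotients. I do not expect a genuine obstacle here: the whole argument rests only on the linear recurrences (\ref{rekurzija_vm})--(\ref{rekurzija_wn}) together with the single relations $s^2=ac+4$ and $t^2=bc+4$, which is exactly why the statement can safely be quoted from \cite{ff}; the only real work is the routine bookkeeping of the degree-at-most-two polynomials in $m$ and $n$ appearing as the coefficients of $c$.
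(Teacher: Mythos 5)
Your induction is exactly the argument the paper relies on: it gives no proof of this lemma, simply citing \cite[Lemma 6]{ff}, whose proof is precisely this double induction using the recurrences together with $s^2=ac+4$ and $t^2=bc+4$. Your algebra (including the identities $1+2m(m+1)-m^2=(m+1)^2$ and $(m+1)^2-\tfrac12 m(m+1)=\tfrac12(m+1)(m+2)$) checks out, and your flagged caveat about cancelling $2$ modulo $c^2$ in $\mathbb{Z}[i][X]$ is the only delicate point, which the paper likewise leaves to the cited reference.
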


It is also easy to check that the next lemma holds.

\begin{lemma}\label{lemma 5.2}
  Let $\{a,b,c\}$ be a $D(4)$-triple from $\mathbb{Z}[i][X]$ for which (\ref{jdbe_osnovne}) holds. Then
\begin{equation}\label{novo}
rst\equiv 2a+2b-2d_-\ (\bmod \ c).
\end{equation}
\end{lemma}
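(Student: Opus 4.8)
The plan is to obtain (\ref{novo}) directly from the closed form (\ref{jdba_d_pm}) of the regular extension $d_-$, reducing the resulting polynomial identity modulo $c$; nothing beyond the definitions is needed.

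First I would pin down which of the two roots of (\ref{jdba_d_pm}) is $d_-$. Squaring (\ref{jdbe_osnovne}) yields $(rst)^2=(ab+4)(ac+4)(bc+4)$, whose leading coefficient is $(\mathrm{lc}(a)\mathrm{lc}(b)\mathrm{lc}(c))^2=\mathrm{lc}(abc)^2$, so $\mathrm{lc}(rst)=\pm\mathrm{lc}(abc)$; after replacing $r$ by $-r$ if necessary we may assume $\mathrm{lc}(rst)=\mathrm{lc}(abc)$. Under the standing assumption $0\le\alpha\le\beta\le\gamma$, $\beta>0$, we have $\deg(abc)=\alpha+\beta+\gamma>\gamma\ge\deg(a+b+c)$, so the top-degree terms cancel in $a+b+c+\tfrac12(abc-rst)$ but not in $a+b+c+\tfrac12(abc+rst)$; hence the lower-degree root is $d_-=a+b+c+\tfrac12(abc-rst)$ (recall that $abc-rst$ is divisible by $2$ in $\mathbb{Z}[i][X]$, so this is well defined).

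Then the rest is a one-line computation: rearranging gives $rst=2(a+b+c)+abc-2d_-=(2a+2b-2d_-)+c(ab+2)$, and reducing modulo $c$ removes the term $c(ab+2)$ together with the $2c$ hidden in $2(a+b+c)$, leaving $rst\equiv 2a+2b-2d_-\pmod c$. I expect no genuine obstacle; the only subtlety is that the identity depends on the sign of $rst$ being the one compatible with the label $d_-$ (the opposite choice would give instead $rst\equiv 2d_--2a-2b\pmod c$), and this compatibility is exactly what the normalization in the first step secures and what the $\pm$-notation in (\ref{jdba_d_pm}) already presupposes.
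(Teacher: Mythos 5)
Your proof is correct: the paper omits the argument entirely (``It is also easy to check\dots''), and your derivation --- normalizing the sign of $rst$ so that $d_-=a+b+c+\tfrac12(abc-rst)$, then rewriting $rst=2a+2b-2d_-+c(ab+2)$ and reducing modulo $c$ --- is exactly the intended computation. Your remark that the congruence presupposes the sign of $rst$ compatible with the label $d_-$ is the only real subtlety, and you handle it properly.
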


Now, we introduce Lemma \ref{alpha0} as a useful tool for later results, i.e. for the proof of Proposition \ref{proposition_1_jurasic} and Theorem \ref{tm_glavni}.

\begin{lemma}\label{alpha0}
If $\alpha =0$, $x_0$ is a constant and $z_0$ is not a constant, then $x_0^2 = a^2+4$ and $z_0 = \pm s$. Also, $x_0=0$ and $a=\pm 2i.$
\end{lemma}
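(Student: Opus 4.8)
The plan is to exploit the defining relations for $(z_0,x_0)$ together with the degree constraints from Lemma \ref{lema_rj_pellove_jedn} and Lemma \ref{kongruencije_rj}. We are given $\alpha = 0$, so $a$ is a constant, and we know $ad_0 + 4 = x_0^2$, $cd_0 + 4 = z_0^2$ from (\ref{d0}), where $d_0 \in \mathbb{Z}[i][X]$. We also have the Pellian identity (\ref{jdba_pellova_prva}) specialised to the base solution: $az_0^2 - cx_0^2 = 4(a-c)$. Since $x_0$ is assumed constant while $z_0$ is not, I would first look at this last identity. Rearranging gives $az_0^2 = cx_0^2 - 4c + 4a = c(x_0^2 - 4) + 4a$, so $c \mid a z_0^2 - 4a = a(z_0^2 - 4)$. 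Since $a$ is a nonzero constant, $c \mid z_0^2 - 4$ in $\mathbb{Z}[i][X]$, i.e. $z_0^2 \equiv 4 \pmod c$. (This is exactly the hypothesis feeding into Lemma \ref{kongruencije_rj}, and it forces $x_0^2 \equiv 4 \pmod a$, which is automatic here since $a$ is constant.)

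Next I would pin down $d_0$. From $az_0^2 = c(x_0^2-4) + 4a$ and $cd_0 = z_0^2 - 4$ we get $a(cd_0 + 4) = c(x_0^2 - 4) + 4a$, hence $acd_0 = c(x_0^2-4)$, so (as $c$ is nonconstant, in particular nonzero) $ad_0 = x_0^2 - 4$. Comparing with $ad_0 + 4 = x_0^2$ this is consistent and tells us nothing new directly; the real leverage is degrees. By (\ref{deg1}), $\deg(z_0) \le \frac{3\gamma - \alpha}{4} = \frac{3\gamma}{4}$ and $\deg(x_0) \le \frac{\alpha + \gamma}{4} = \frac{\gamma}{4}$. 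By Lemma \ref{kongruencije_rj}, since $z_0$ is not constant, $\deg(z_0) \ge \frac{\gamma}{2}$. Now look at $cd_0 = z_0^2 - 4$: taking degrees, $\gamma + \deg(d_0) = 2\deg(z_0)$, so $\deg(d_0) = 2\deg(z_0) - \gamma$, which lies between $0$ and $\frac{\gamma}{2}$. Meanwhile $ad_0 = x_0^2 - 4$ with $a$ constant gives $\deg(d_0) = \deg(x_0^2 - 4) \le 2\deg(x_0) \le \frac{\gamma}{2}$; but if $x_0$ is a nonzero constant then $x_0^2 - 4$ is constant, forcing $\deg(d_0) = 0$ and hence $\deg(z_0) = \frac{\gamma}{2}$. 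The borderline case $x_0^2 - 4 = 0$ (i.e. $x_0 = \pm 2$) would give $ad_0 = 0$, so $d_0 = 0$, contradicting $cd_0 = z_0^2 - 4$ with $z_0$ nonconstant. So $x_0$ is a nonzero constant with $x_0^2 \ne 4$, $d_0$ is a nonzero constant, and $ad_0 = x_0^2 - 4$.

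To finish I would return to the Pellian identity $az_0^2 - cx_0^2 = 4(a - c)$ and substitute what we now know. Write $z_0^2 - 4 = cd_0$, so $z_0^2 = cd_0 + 4$; then $a(cd_0 + 4) - cx_0^2 = 4a - 4c$, giving $acd_0 - cx_0^2 = -4c$, i.e. $ad_0 - x_0^2 = -4$, which we already had. The missing relation must come from a second equation: I would use that $\{a,b,c\}$ is genuinely a triple, or more cleanly, compare $d_0$ against the regular extension. Actually the cleanest route: since $d_0$ is constant and $\{a, c, d_0\}$ is a $D(4)$-triple (from $ad_0 + 4 = x_0^2$, $cd_0 + 4 = z_0^2$, and $ac + 4 = s^2$), with $a, d_0$ both constant and $c$ nonconstant, Lemma \ref{lema_samo_jedan_konstantan} forbids two constants in a nonconstant tuple unless $d_0 = a$ or the triple degenerates — so in fact $d_0 = a$, whence $x_0^2 = a^2 + 4$ and $z_0^2 = ac + 4 = s^2$, giving $z_0 = \pm s$. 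Finally, $x_0$ constant with $x_0^2 = a^2 + 4$: I would argue that over $\mathbb{Z}[i]$ the only way $a^2 + 4$ is a perfect square is the trivial degenerate one — $x_0^2 - a^2 = 4$ factors as $(x_0 - a)(x_0 + a) = 4$ in $\mathbb{Z}[i]$, and running through the finitely many factorizations of $4$ in $\mathbb{Z}[i]$ (up to units $\pm 1, \pm i$), the only solutions with $a \ne 0$ give $\{x_0 - a, x_0 + a\}$ with both factors of the same parity, forcing $x_0 = 0$ and $a^2 = -4$, i.e. $a = \pm 2i$. Hence $x_0 = 0$ and $a = \pm 2i$, as claimed.

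The main obstacle I anticipate is the last step: justifying that $x_0 = 0$ is forced, i.e. the Gaussian-integer Diophantine analysis of $x_0^2 = a^2 + 4$. One must be careful that $a$ need not be rational — it is an arbitrary nonzero constant in $\mathbb{Z}[i]$ — so the factorization argument over $\mathbb{Z}[i]$ must be done honestly, checking divisor pairs of $4 = -i(1+i)^4$ and matching parities; this is where a genuinely ring-specific (as opposed to $\mathbb{Z}$-specific) argument is needed. A secondary subtlety is cleanly invoking Lemma \ref{lema_samo_jedan_konstantan} to conclude $d_0 = a$ rather than merely "a contradiction", since the lemma as stated rules out repeated and multiple-constant entries; one should phrase it as: $\{a, c, d_0\}$ with $a, d_0$ constant and $c$ nonconstant cannot be a $D(4)$-triple of distinct elements, so $d_0 \in \{a, c\}$, and $d_0 = c$ is impossible by degree, leaving $d_0 = a$.
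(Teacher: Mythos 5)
Your proposal is correct and follows essentially the same route as the paper: the paper defines $e=(x_0^2-4)/a$ (your $d_0$), notes $ce+4=z_0^2$, invokes Lemma \ref{lema_samo_jedan_konstantan} on the triple $\{a,c,e\}$ to force $e=a$, and then reads off $x_0=0$, $a=\pm 2i$ from $(x_0-a)(x_0+a)=4$ in $\mathbb{Z}[i]$. Your extra degree bookkeeping is not needed, but your explicit treatment of the borderline case $x_0=\pm 2$ and of the divisor pairs of $4$ in $\mathbb{Z}[i]$ fills in details the paper leaves implicit.
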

\begin{proof}
Let $\alpha =0$ and $x_0$ be a constant. 
 Define
$$e=\frac{x_0^2-4}{a}\in\mathbb{C}.$$
It is easy to see from (\ref{jdba_pellova_prva}) that $ce+4=z_0^2$. Since $z_0$ and $c$ are not constants,  $\{a, c, e\}$ is a polynomial $D(4)$-triple over $\mathbb{C}$, so by Lemma \ref{lema_samo_jedan_konstantan} we must have $a=e$. Now, $x_0^2 = a^2+4$ and inserting that in (\ref{jdba_pellova_prva}) implies $z_0=\pm s$. Also, $(x_0-a)(x_0+a)=4$ imply $a=\pm 2i$ and $x_0=0$ are the only possibilities.
\end{proof}

\section{Gap principle for degrees}\label{gap_degrees} 

In this section, we obtain all the possible inequalities for the degrees of 
polynomials in a $D(4)$-triple $\{a, b, c\}$, i.~e.~we deal with the inequalities that degrees $\alpha, \beta, \gamma$, satisfy. 

\begin{lemma}
Let $\{a, b, c\}$ be a $D(4)$-triple in $\mathbb{Z}[i][X]$ such that (\ref{jdbe_osnovne}) holds. For $d_{\pm}$ defined by (\ref{jdba_d_pm}),
where $u_{\pm}, v_{\pm}$, and $w_{\pm}$ are introduced in (\ref{uvw}),
we get
\begin{equation}\label{izrazc}
    c = a + b + d_{\pm} + \frac{1}{2}abd_{\pm}  \mp \frac{1}{2}ru_{\pm}v_{\pm}
\end{equation}
 \begin{equation}\label{rw}
     d_{\pm} = a+b-c+rw_{\pm},
 \end{equation}
  \begin{equation}\label{sw}
     d_{\pm} = a-b+c+sv_{\pm},
 \end{equation}
  \begin{equation}\label{tu}
     d_{\pm} = -a+b+c+tu_{\pm}.
 \end{equation}
\end{lemma}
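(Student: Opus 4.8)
The statement is a bundle of four algebraic identities relating the elements of a $D(4)$-triple $\{a,b,c\}$, the regular extensions $d_\pm$, and the square roots $r,s,t,u_\pm,v_\pm,w_\pm$ introduced in (\ref{jdbe_osnovne}) and (\ref{uvw}). Since all of $d_\pm,u_\pm,v_\pm,w_\pm$ are given by explicit closed-form expressions in $a,b,c,r,s,t$, the plan is simply to verify each identity by substitution. I would start from the definition $d_\pm=a+b+c+\tfrac12(abc\pm rst)$ in (\ref{jdba_d_pm}) and from $u_\pm=\tfrac12(rs\pm at)$, $v_\pm=\tfrac12(rt\pm bs)$, $w_\pm=\tfrac12(st\pm cr)$, treating these as formal polynomial expressions subject only to the three relations $r^2=ab+4$, $s^2=ac+4$, $t^2=bc+4$.

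The cleanest route is to prove (\ref{rw}), (\ref{sw}), (\ref{tu}) first, as they are the most symmetric, and then derive (\ref{izrazc}) from one of them. For (\ref{rw}) I would compute $rw_\pm=r\cdot\tfrac12(st\pm cr)=\tfrac12(rst\pm cr^2)=\tfrac12 rst\pm\tfrac12 c(ab+4)$, and then add $a+b-c$; comparing with $d_\pm-(a+b-c)=\tfrac12(abc\pm rst)$ reduces the claim to the identity $\pm\tfrac12 c(ab+4)\mp c = \pm\tfrac12 abc$ after cancelling the common $\tfrac12 rst$, i.e. to $\tfrac12 abc+2c-c=\tfrac12 abc$, which is false as written — so in fact one must be more careful and pick the correct sign pairing, using $r^2-4=ab$ rather than $r^2=ab+4$; the honest computation is $a+b-c+rw_\pm = a+b-c\pm\tfrac12 c r^2+\tfrac12 rst = a+b-c\pm\tfrac12 c(ab+4)+\tfrac12 rst$, and one checks this equals $d_\pm$ only after correctly tracking that the "$+$" in $w_+$ forces the "$+$" branch $d_+$ and noting $\pm\tfrac12 c(ab)+(-c\pm 2c)=\pm\tfrac12 abc$ exactly when the $\pm$ in front of $c$ is "$+$". (The sign bookkeeping is where care is needed; each of (\ref{rw}), (\ref{sw}), (\ref{tu}) follows by the same cyclic pattern $r\leftrightarrow s\leftrightarrow t$, $ab+4=r^2$ etc.) I would then verify (\ref{jdba_d_pm})'s consistency with (\ref{uvw}) — that is, that $ad_\pm+4=u_\pm^2$ holds with these definitions — which the excerpt already asserts, so it can be cited rather than reproved.

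For (\ref{izrazc}) I would substitute (\ref{rw}) into the right-hand side: writing $d:=d_\pm$, the claim $c=a+b+d+\tfrac12 abd\mp\tfrac12 ru_\pm v_\pm$ can be rearranged to $c-a-b-d-\tfrac12 abd=\mp\tfrac12 ru_\pm v_\pm$. Using $rw_\pm=d-(a+b-c)$ from (\ref{rw}) gives $c-a-b-d=-rw_\pm$, so the left side becomes $-rw_\pm-\tfrac12 abd$, and one must show $rw_\pm+\tfrac12 abd=\pm\tfrac12 ru_\pm v_\pm$, equivalently (dividing by $r$, which is legitimate in the polynomial ring since $r\neq0$) $w_\pm+\tfrac{abd}{2r}=\pm\tfrac12 u_\pm v_\pm$; expanding $u_\pm v_\pm=\tfrac14(rs\pm at)(rt\pm bs)=\tfrac14(r^2st\pm br s^2\pm a r t^2+ab st)$ and using $r^2=ab+4$, $s^2=ac+4$, $t^2=bc+4$, $rst\equiv\cdots$ should collapse everything to an identity. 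The main obstacle I anticipate is purely clerical: keeping the $\pm$ signs coherent across all four identities and across the three-fold cyclic symmetry, and correctly using $r^2=ab+4$ (not $ab$) at each step. There is no conceptual difficulty — this lemma is a formal corollary of the definitions of $d_\pm$ and of (\ref{uvw}) — so the proof is a direct computation, and I would present it by doing (\ref{rw}) in full and then remarking that (\ref{sw}), (\ref{tu}) follow by the analogous cyclic substitution while (\ref{izrazc}) follows by combining (\ref{rw}) with the expansion of $u_\pm v_\pm$.
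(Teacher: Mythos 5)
Your proposal is correct and is essentially the paper's argument: the paper disposes of the lemma in one line by citing the analogous computation in \cite[Lemma 1]{dft}, and what you write out is exactly that direct verification by substituting the closed forms of $d_{\pm}$ and $u_{\pm},v_{\pm},w_{\pm}$ and using $r^2=ab+4$, $s^2=ac+4$, $t^2=bc+4$. The only loose end in your write-up --- the sign clash you notice in the minus branch of (\ref{rw}) --- is not a failure of the identity but of sign normalization: (\ref{uvw}) defines $u_{\pm},v_{\pm},w_{\pm}$ only through their squares, and the identities (\ref{rw})--(\ref{tu}) hold once one fixes $w_{-}=\tfrac12(cr-st)$, $v_{-}=\tfrac12(bs-rt)$, $u_{-}=\tfrac12(at-rs)$, after which $rw_{-}=\tfrac12(abc-rst)+2c$ and $a+b-c+rw_{-}=d_{-}$ exactly as in the plus branch.
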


\begin{proof}
Analogously as in \cite[Lemma 1]{dft} we get 
(\ref{izrazc}) and (\ref{rw}) and, by (\ref{rw}), the expressions (\ref{sw}) and (\ref{tu}) are obtained. 
\end{proof}
Similarly as in \cite[Lemma 1]{dfl} (and \cite[Lemma 2]{djint}) we can prove a gap in degrees of polynomials in the polynomial $D(4)$-triple $\{a,b,c\}$, where $a, b, c \in\mathbb{Z}[i][X]$. Also, if we observe an extension of a $D(4)$-pair $\{a,b\}$ to a $D(4)$-triple $\{a,b,c\}$, we can describe in more details the element $c$ that has the lowest degree.

\begin{lemma}\label{degsc}
Let $\{a, b, c\}$ be a polynomial Diophantine $D(4)$-triple such that $\alpha, \beta, \gamma$ are degrees of the polynomials $a, b, c$, respectively, and $\alpha \leq \beta \leq \gamma$. Then $$c = a + b \pm 2r \ \ \text{or} \ \ \gamma \geq \alpha + \beta.$$
\end{lemma}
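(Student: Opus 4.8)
The plan is to argue by contradiction: suppose $c \neq a+b\pm 2r$, i.e.\ $\{a,b,c\}$ is not a regular triple over $\mathbb{Z}[i][X]$, and suppose further that $\gamma < \alpha+\beta$; then derive a contradiction. The starting point is the Pellian description of extensions of the pair $\{a,b\}$: write $ac+4=s^2$, $bc+4=t^2$ with $s,t\in\mathbb{Z}[i][X]$, so that $bs^2-at^2 = 4(b-a)$. I would then invoke Lemma~\ref{lema_rj_pellove_jedn} (applied to the pair $\{a,b\}$ in place of $\{a,c\}$, $\{b,c\}$) to obtain a \emph{fundamental} solution $(s_0,t_0)$ of this Pellian equation with controlled degree, and express $(s,t)$ via the recurrence generated by $\left(\tfrac{r+\sqrt{ab}}{2}\right)^{2k}$. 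The regular triple $c=a+b\pm 2r$ corresponds precisely to $k\le 1$ (or to the trivial fundamental solution), so the assumption that $\{a,b,c\}$ is irregular forces $k\ge 1$ with a nontrivial fundamental solution, hence a degree lower bound on $s$ (equivalently on $\gamma$) coming from one application of the recursion step.

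The key computation is to compare two degree estimates for $c$. On one hand, from $ac+4=s^2$ we get $\gamma+\alpha = 2\deg(s)$, and the recurrence/fundamental-solution analysis gives $\deg(s)\ge \tfrac{\alpha+\beta}{2} + (\text{something nonnegative})$ when the triple is irregular, which already pushes $\gamma$ up toward $\alpha+\beta$. To get the sharp bound I would use the congruence relation $s^2\equiv 4\pmod{a}$ together with the analogue $t^2\equiv 4\pmod b$ (these follow in the same way as Lemma~\ref{kongruencije_rj}), and the fact established in that lemma that a non-constant $s_0$ has $\deg(s_0)\ge \tfrac{\beta}{2}$ while a non-constant $t_0$ has $\deg(t_0)\ge\tfrac{\alpha}{2}$ --- wait, more precisely I would track the fundamental solution of $bs^2 - at^2 = 4(b-a)$ and show its degree is at least $\tfrac{\alpha+\beta}{2}$ unless it is the trivial one $(s_0,t_0)=(\pm 2,\pm 2)$ or $c$ is regular. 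Feeding $\deg(s)=\deg(s_0)+k\cdot\tfrac{\alpha+\beta}{2}$ with $k\ge 1$ into $\gamma = 2\deg(s)-\alpha$ yields $\gamma \ge \alpha+\beta$ (actually $\gamma\ge 2\beta\ge\alpha+\beta$ in the relevant subcase), contradicting $\gamma<\alpha+\beta$.

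The main obstacle I anticipate is the bookkeeping around the \emph{fundamental solution} of the Pellian equation $bs^2-at^2 = 4(b-a)$ and cleanly identifying which small solutions correspond to $c=a+b\pm2r$: one must rule out, by degree counting, that a non-regular $c$ of small degree could arise from a fundamental solution that is itself already of large degree but with $c$ small --- i.e.\ one needs the lower bound ``non-constant fundamental solution $\Rightarrow \deg \ge \tfrac{\alpha+\beta}{2}$'' to be genuinely sharp, and one must handle separately the degenerate case $\alpha=0$ (where $a$ is constant), using Lemma~\ref{alpha0} to control the constant fundamental solutions. Once those edge cases are dispatched, the remaining inequality is a short degree arithmetic as in \cite[Lemma 1]{dfl}, so I would model the write-up closely on that reference, adapting the constant multiple $4$ (rather than $1$) and the Gaussian-integer divisibility-by-$2$ checks that already appeared above for $v_1,w_1$.
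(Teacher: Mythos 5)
Your plan takes a genuinely different route from the paper. The paper's proof is purely algebraic: it forms $d_{1,2}=a+b+c+\tfrac12(abc\pm rst)$, uses the product identity $d_1d_2=a^2+b^2+c^2-2ab-2ac-2bc-16$ to conclude $\deg(d_-)<\gamma$, then rewrites $c$ as $a+b+d_-+\tfrac12(abd_-+ru_{\pm}v_{\pm})$ and reads off the dichotomy directly: $d_-=0$ forces $c=a+b\pm2r$, while $d_-\neq0$ forces $\gamma\geq\deg(abd_-)\geq\alpha+\beta$. You instead propose to descend along the Pellian equation $bs^2-at^2=4(b-a)$ to a fundamental solution.

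There is, however, a genuine gap at exactly the point where the content of the lemma lies. Your descent handles the case $k\geq1$ well enough (a nontrivial power of $\tfrac{r+\sqrt{ab}}{2}$ adds at least $\alpha+\beta$ to $\deg(s)$, and the trivial fundamental solution at $k=1$ is the regular $c=a+b\pm2r$), but it does not handle $k=0$, where $c=\tfrac{s_0^2-4}{a}$ is the extension attached to the fundamental solution itself. For that case you would need: no fundamental solution satisfies $\beta\leq 2\deg(s_0)-\alpha<\alpha+\beta$ except the one giving $c=a+b\pm2r$. This is precisely the lemma restated for ``fundamental'' values of $c$, and the only tool you bring to bear on it is the upper bound $\deg(s_0)\leq\tfrac{3\beta-\alpha}{4}$, which leaves a nonempty window $\tfrac{\alpha+\beta}{2}<\deg(s_0)<\alpha+\tfrac{\beta}{2}$ whenever $\beta>3\alpha>0$. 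So the reduction is circular where it matters. Your cited model, \cite[Lemma 1]{dfl}, closes this not by ``short degree arithmetic'' on the Pellian but by the same $d_-$/product-identity device the paper uses; without importing that device (or an equivalent structural input) your argument does not go through. The case $\alpha=0$ is indeed harmless, since then $\alpha+\beta=\beta\leq\gamma$ holds trivially.
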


\begin{proof}
For (\ref{jdbe_osnovne}) we consider the polynomials
 of the form
 $$d_{1, 2}= a + b+ c + \frac{1}{2}(abc\pm rst).$$
Because $$d_1\cdot d_2 = a^2 +b^2 + c^2 - 2ab-2ac-2bc-16, $$ we notice that $\deg(d_1)+\deg(d_2)\leq 2\gamma$ and because at most one element of the Diophantine $D(4)$-triple $\{a, b, c\}$ is a constant polynomial, we conclude that $\deg(d_1) \neq \deg(d_2)$. We denote by $d_{-}$ the polynomial with the lower degree among $d_1, d_2$. In this case, $\deg(d_-)<\gamma.$ 

We set $$c_{\pm} = a+b+d_-+\frac{1}{2}(abd_-+ru_{\pm}v_{\pm}),$$ for $u_{\pm}, v_{\pm}$ that are already introduced in (\ref{uvw}).

After some elementary calculations, we can notice that
$$ac_+ + 4 = \left(\frac{1}{2}(av_{\pm}+ru_{\pm})\right)^2 = \left(\frac{1}{4}(abs\pm art + art \pm abs \pm 4s)\right)^2,$$
$$ac_- + 4 = \left(\frac{1}{2}(av_{\pm}-ru_{\pm})\right)^2 = \left(\frac{1}{4}(abs\pm art - art \mp abs \mp 4s)\right)^2.$$
There exists an index $i \in\{+, -\}$ such that $ac_i +4 = s^2 = ac+4$, so $c=c_i$. Let $c' = c_j$ such that $j\neq i$, $ j \in \{+, -\}$. For this set up we have $cc' = c_+ c_- = a^2 + b^2 + d_-^2 -2ab-2bd-2bd-16$, so $\deg(c) + \deg(c') \leq 2\gamma$. Hence, $\deg(c)\geq \deg(c')$. We have two possibilities, if $d_- = 0$, then $u_{\pm} = \pm 2$, $v_{\pm} = \pm 2$, and $c_{\pm} = a + b \pm 2r$,
and if $d_- \neq 0$, then $\gamma \geq \deg(abd_-) \geq \alpha + \beta$.
\hfill\qedhere
\end{proof}
Now we deal with $D(4)$-quadruples $\{a, b, c, d\}$ for which $d_-$ is a possible $d$ and obtain all possibilities for $d_-$. Additionally, we use obtained relations to draw some conclusions about the degrees of the polynomials $a, b, c$.
\begin{remark}\label{degdplusminus}
By (\ref{jdba_d_pm}), it is easy to conclude that 
\begin{equation}\label{degdplus}
\deg(d_+) = \alpha + \beta + \gamma > \gamma.
\end{equation}
The next lemma shows that for $d_{-}\neq0$ it holds 
\begin{equation}\label{degdminus}
    0\leq \deg(d_{-})\leq \gamma - \alpha - \beta,
\end{equation}
or, specifically, $\gamma \geq \alpha + \beta.$
\end{remark}

The proof of the next lemma is analogous to \cite[Lemma 2]{dujjur}.

\begin{lemma}\label{degdminuslemma}
Let $\{a, b, c\}$ be a $D(4)$-triple in $\mathbb{Z}[i][X]$ and let $d_{-}$ be defined by (\ref{jdba_d_pm}). Then, $d_{-} = 0$ or $\deg(d_{-}) = \gamma - \alpha - \beta<\gamma$.
\end{lemma}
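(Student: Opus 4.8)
The plan is to compare degrees in the product identity relating $d_+$ and $d_-$. Writing $\{d_1,d_2\}=\{d_+,d_-\}$, the computation already carried out in the proof of Lemma~\ref{degsc} gives
\[
d_+d_- \;=\; a^2+b^2+c^2-2ab-2ac-2bc-16 \;=\; (a+b-c)^2-4(ab+4)\;=\;(a+b-c)^2-4r^2 .
\]
If $d_-=0$ the claim holds, so I would assume $d_-\neq 0$. Since $\deg(d_+)=\alpha+\beta+\gamma>0$ by (\ref{degdplus}), also $d_+\neq 0$, hence $d_+d_-\neq0$; thus $(a+b-c)^2\neq 4r^2$, i.e.\ $c\neq a+b\pm 2r$, and Lemma~\ref{degsc} then forces $\gamma\geq\alpha+\beta$.

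Next I would determine $\deg(a+b-c)$. As $\deg c=\gamma$ and $\alpha\le\beta\le\gamma$, the leading term of $c$ dominates and $\deg(a+b-c)=\gamma$, unless $\beta=\gamma$ and the leading terms of $c$ and of $a+b$ happen to cancel. But $\beta=\gamma$ together with $\gamma\ge\alpha+\beta$ forces $\alpha=0$, and in that situation such a cancellation would give $\deg\bigl((a+b-c)^2-4r^2\bigr)<2\gamma$ (using $\deg(4r^2)=\deg\bigl(4(ab+4)\bigr)=\alpha+\beta=\gamma<2\gamma$), which contradicts
\[
\deg(d_+d_-)=\deg(d_+)+\deg(d_-)=(\alpha+\beta+\gamma)+\deg(d_-)=2\gamma+\deg(d_-)\ge 2\gamma .
\]
Hence $\deg(a+b-c)=\gamma$ in every case, so $\deg\bigl((a+b-c)^2\bigr)=2\gamma$.

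Finally, from $\gamma\ge\alpha+\beta$, $\gamma\ge1$ and $\alpha\le\beta\le\gamma$ one checks $\alpha+\beta<2\gamma$ (equality would force $\alpha=\beta=\gamma=0$). Consequently $\deg(4r^2)=\alpha+\beta<2\gamma=\deg\bigl((a+b-c)^2\bigr)$, so that $\deg(d_+d_-)=\deg\bigl((a+b-c)^2-4r^2\bigr)=2\gamma$. Comparing this with $\deg(d_+d_-)=\deg(d_+)+\deg(d_-)=(\alpha+\beta+\gamma)+\deg(d_-)$ yields $\deg(d_-)=\gamma-\alpha-\beta$, which is $<\gamma$ because $\alpha+\beta\ge\beta>0$.

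The only delicate point I foresee, and the step I would single out as the main obstacle, is ruling out a leading-term cancellation in $a+b-c$ when $\beta=\gamma$; the rest is routine degree bookkeeping. As indicated above, that cancellation is excluded by invoking the already-known value $\deg(d_+)=\alpha+\beta+\gamma$ together with the inequality $\gamma\ge\alpha+\beta$ obtained from Lemma~\ref{degsc}.
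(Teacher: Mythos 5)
Your proof is correct, and it follows essentially the same route as the paper's (omitted, cited) argument: the product identity $d_+d_-=a^2+b^2+c^2-2ab-2ac-2bc-16=(a+b-c)^2-4r^2$, which the paper already records in the proof of Lemma~\ref{degsc}, combined with $\deg(d_+)=\alpha+\beta+\gamma$ and a degree comparison. Your explicit exclusion of a leading-term cancellation in $a+b-c$ when $\beta=\gamma$ is a point the standard write-ups gloss over, and you handle it correctly.
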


\begin{remark}\label{Rem_dminus}
From previous Lemma \ref{degdminuslemma}, in case $\beta = \gamma$ we get $d_{-}=0$ or $\deg(d_{-})=0$, so $d_{-}=a=\pm 2i.$\par
If $d_{-}=0$, then $c=a+b\pm 2r$ and, by (\ref{jdba_c_regularni1}), $s=\pm(a\pm r)$ and $t=\pm(b\pm r)$. More precisely, if $c=a+b+2r$ then $s=\pm(a+r)$ and $t=\pm(b+r)$ and if $c=a+b-2r$ then $s=\pm(a-r)$ and $t=\pm(b-r)$.\par
If $d_{-}=a=\pm 2i$, then $c=-b+2a=-b\pm 4i$ and, by (\ref{jdba_c_regularni1}), $s=\pm ir$ and $t=\pm i(b-a)$.
\end{remark}

\begin{remark}
Using Remark \ref{Rem_dminus} we can easily determine some examples of polynomial $D(4)$-triples over Gaussian integers.

Let $d_{-} = a = \pm2i$. From $ab + 4 = r^2$ and  $bd_{-}+4 = v^2$ we get $$\pm 2bi + 4 = r^2 = v^2.$$  
So, $v = \pm r$, or $\pm r = \frac{1}{2}(bs \pm rt).$ 
For constructing our example, we will observe the case when $a=2i$ and $r=\frac{1}{2}(bs + rt)$.
Hence,
\begin{equation}\label{bs}
    r( 2- t) = bs.
\end{equation}
Moreover, we will assume $s|r$. Then there exists $p\in\mathbb{Z}[i][X]$ such that $r=ps$. Now (\ref{bs}) implies
\begin{equation}\label{pt2p}
    b =  2p - pt.
\end{equation} 
From $c=-b+2a= -b + 4i$, we get $C = -B$, where $B, C$ are leading coefficients of $b, c\in\mathbb{Z}[i][X]$, respectively. By (\ref{pt2p}) and (\ref{jdbe_osnovne}), $B =- p \sqrt{BC}$. So, we can choose $p = i$, so we observe a $D(4)$-triple
$$\{a,b,c\}=\{2i,-ti+2i,ti+2i\}.$$
It is easy to check that $r^2=2t$ and $s^2=-2t$. So we can choose any polynomial $t\in\mathbb{Z}[i][X]$ such that $2t$ is a perfect square. For example, we can take $t=2(X+1)^2$ and get
$$\{a,b,c\}=\{2i,-2X^2i-4Xi,2X^2i+4Xi+4i\}.$$
\end{remark}


If $d=d_-$ then we can easily prove what values $m$ and $n$ can have. 

\begin{lemma}\label{gap0}
Let $\{a, b, c\}$ be a $D(4)$-triple in $\mathbb{Z}[i][X]$. Let $v_m=w_n$ and let $d = \dfrac{v_m^2-4}{c}$.
If $d = d_{-}$, then $m, n \in \{0, 1\}$.
\end{lemma}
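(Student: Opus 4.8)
The plan is to argue entirely by comparing degrees. Write $z := v_m = w_n$; since $d = d_-$ and $d = (v_m^2-4)/c$, the polynomial $z$ satisfies $cd_- + 4 = z^2$, so I would first pin down $\deg(z)$ from the shape of $d_-$ and then play it off against the growth of the recurrences $(v_m)$ and $(w_n)$. By Lemma~\ref{degdminuslemma} there are exactly two possibilities, $d_- = 0$ or $\deg(d_-) = \gamma - \alpha - \beta$ (which in particular forces $\gamma \geq \alpha + \beta$), and I would treat them separately.

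Assume first $d_- \neq 0$. Then $\gamma + \deg(d_-) = 2\gamma - \alpha - \beta \geq \gamma > 0$, so inside $z^2 = cd_- + 4$ the term $cd_-$ dominates and $\deg(z) = \gamma - \tfrac{\alpha+\beta}{2}$. Now suppose $m \geq 2$. By~(\ref{degvm}) together with the lower bound in~(\ref{degv1}),
\[
\deg(v_m) = (m-1)\tfrac{\alpha+\gamma}{2} + \deg(v_1) \geq \tfrac{\alpha+\gamma}{2} + \tfrac{\gamma}{2} = \gamma + \tfrac{\alpha}{2} > \gamma - \tfrac{\alpha+\beta}{2} = \deg(z),
\]
where the strict inequality uses $\alpha + \tfrac{\beta}{2} > 0$ (recall $\beta > 0$); since $v_m = z$ this is impossible, so $m \leq 1$. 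The same computation with~(\ref{degwn}) and~(\ref{degw1}) handles $n$: if $n \geq 2$ then $\deg(w_n) \geq \gamma + \tfrac{\beta}{2} > \gamma - \tfrac{\alpha+\beta}{2} = \deg(z)$, again a contradiction, so $n \leq 1$.

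Now assume $d_- = 0$. Then $z^2 = 4$, so $z$ is constant and $\deg(z) = 0$; but Lemma~\ref{lema_stupnjevi} gives $\deg(v_m) \geq \deg(v_1) \geq \tfrac{\gamma}{2} > 0$ for every $m \geq 1$ (using $\gamma > 0$), and likewise $\deg(w_n) > 0$ for every $n \geq 1$, so $v_m = z = w_n$ forces $m = n = 0$. In either case $m, n \in \{0,1\}$, as claimed. I do not expect a genuine obstacle here: the argument is bookkeeping with the degree formulas of Lemmas~\ref{lema_stupnjevi} and~\ref{degdminuslemma}, and the only point worth a second look is checking that $cd_-$ is really the dominant term of $z^2$ when $d_- \neq 0$, which is immediate once $\gamma \geq \alpha + \beta$ is available.
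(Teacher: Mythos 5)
Your proof is correct and follows essentially the same route as the paper: compute $\deg(z)$ from $cd_-+4=z^2$ using Lemma~\ref{degdminuslemma}, then rule out $m,n\geq 2$ by the degree growth formulas of Lemma~\ref{lema_stupnjevi}. Your treatment of the $d_-=0$ case even yields the slightly stronger conclusion $m=n=0$, consistent with how the paper later uses this lemma.
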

\begin{proof}
If $d_{-} = 0$, from $cd_{-}+4 = w_{-}^2$ we simply obtain $w_{-} = \pm 2$, so $\deg(w_{-}) = 0$. If $d_{-} \neq 0$, by Lemma \ref{degdminuslemma} and $cd_{-}+4 = w_{-}^2$ we obtain $$\deg(w_{-}) = \gamma - \frac{\alpha+\beta}{2} < \gamma.$$ By (\ref{degvm}) and (\ref{degwn}) we have $\deg(v_m), \ \deg(w_n) \geq \gamma$ for $m, n\geq 2$, respectively. So, $d_{-}$ must arise from $v_m = w_n $ for $m, n \in \{0, 1\}$.
\end{proof}

The next lemma
considers all possibilities for $d_-$, so we get all the possible relations between degrees $\alpha, \beta$ and $\gamma$. A similar gap principle is very well known in the classical case and in the polynomial variants of the problem of Diophantus (see \cite{fj_19}, \cite{df}).

\begin{lemma}\label{degreesd}
Let $\{a, b, c\}$ be a $D(4)$-triple in $\mathbb{Z}[i][X]$ for which (\ref{jdbe_osnovne}) holds. 
Let $A, B, C$ denote the leading coefficient of $a, b, c,$ respectively.
Then:
\begin{enumerate}
\item If $d_{-} = 0$, then $z_0 = z_1 = \pm 2$. In this case, $c = a + b \pm 2r$ and $\beta = \gamma$. Additionally, if $\alpha <\beta=\gamma$, then $B = C$. If $\alpha = \beta=\gamma$, then $C = A + B \pm 2\sqrt{AB}$.

\item For $\deg(d_{-})=0$, the following options can occur
\begin{enumerate}

   \item If $d_{-}=a =\pm 2i$, then $z_0= z_1 =\pm s$, for $\alpha =0, \ \beta = \gamma$ and $c = -b \pm 4i.$ 
   
\item If $d_{-}\in\mathbb{Z}[i]\backslash\{0, a\}$, then $z_0 = z_1 = \pm \frac{1}{2}(cr\pm st),$ for $\alpha>0, \gamma = \alpha + \beta.$
 
\end{enumerate}

\item For $\deg(d_{-})>0$, the following options can occur

\begin{enumerate}
    \item $z_0 = z_1 = \pm \frac{1}{2} (cr \pm st)$ with $\alpha >0$, $\deg(d_{-})\leq \alpha$ and $\alpha + \beta <\gamma \leq 2\alpha + \beta$,
    \item $(z_0, z_1) = (\pm \frac{1}{2} (cr \pm st), \pm s)$ with $\alpha \leq \deg(d_{-}) \leq \beta$, $\alpha \geq 0$ and $2\alpha + \beta \leq \gamma \leq \alpha + 2\beta$,
    \item $(z_0, z_1) = (\pm t, \pm \frac{1}{2} (cr \pm st))$ with $\deg(d_{-})=\alpha$, $\alpha=\beta$ and $\gamma = 3\alpha$,
    \item $(z_0, z_1) = (\pm t, \pm s)$ with $\beta \leq \deg(d_{-}) <\gamma$, $\alpha \geq0$ and $\gamma \geq \alpha+2\beta.$
\end{enumerate}

\end{enumerate}
\end{lemma}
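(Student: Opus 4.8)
The plan is to work through the cases systematically, using the expression $d_{-}=a+b+c+\tfrac12(abc-rst)$ together with the auxiliary identities (\ref{rw}), (\ref{sw}), (\ref{tu}) and the degree bounds from Remark \ref{degdplusminus}, Lemma \ref{degdminuslemma} and Remark \ref{Rem_dminus}. The guiding principle is that $d_{-}$ arises from $v_m=w_n$ only for $m,n\in\{0,1\}$ by Lemma \ref{gap0}, so $z_0\in\{z_0\text{ from }v_0=z_0\}$ matches $w_n$ and similarly $z_1$ matches $v_m$; concretely one compares the polynomials $z_0,z_1$ against $\pm s$, $\pm t$, and $\pm\tfrac12(cr\pm st)$ (the values of $v_0=z_0$, $v_1=\tfrac12(sz_0+cx_0)$ and the corresponding $w$-terms, using that $cd_{-}+4=w_{-}^2$ forces $\deg w_{-}=\gamma-\tfrac{\alpha+\beta}{2}$ when $d_{-}\neq0$). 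First I would dispose of $d_{-}=0$: then $cd_{-}+4=4$ gives $w_{-}=\pm2$, hence $z_0=v_0$ and $z_1=w_0$ are (up to sign) the constant $2$, and Remark \ref{Rem_dminus} already records $c=a+b\pm2r$; the leading-coefficient assertions then follow by reading off the top-degree term of $c=a+b\pm2r$ according to whether $\alpha<\beta=\gamma$ or $\alpha=\beta=\gamma$ (noting $\beta=\gamma$ is forced since $\deg(a+b\pm 2r)\le\beta$ with equality in the leading coefficient unless cancellation, which Lemma \ref{degdminuslemma} rules out except in the degenerate $d_-=a$ subcase).

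Next, for $\deg(d_{-})=0$ with $d_{-}\neq 0$, Lemma \ref{degdminuslemma} forces $\gamma=\alpha+\beta$; I would split on whether $d_{-}=a$. If $d_{-}=a=\pm2i$ then necessarily $\alpha=0$ (constant), and by Remark \ref{Rem_dminus} we get $\beta=\gamma$, $c=-b\pm4i$, $s=\pm ir$; here $z_0$ cannot be constant (else $\deg z_0\ge\gamma/2$ by Lemma \ref{kongruencije_rj} would fail), so Lemma \ref{alpha0} applies and gives $z_0=\pm s$, and symmetrically $z_1=\pm s$. If instead $d_{-}\in\mathbb{Z}[i]\setminus\{0,a\}$, then $\alpha>0$ (if $\alpha=0$, $d_-$ nonconstant-compatibility via Lemma \ref{alpha0} would force $d_-=0$ or $d_-=a$), and $\deg w_{-}=\gamma-\tfrac{\alpha+\beta}{2}=\tfrac{\alpha+\beta}{2}$; matching this against the recurrence values shows $z_0=z_1=\pm\tfrac12(cr\pm st)$ (the $v_1$-type term), since this is the unique option of degree $\tfrac{\alpha+\beta}{2}\cdot\!$—more precisely $\deg\bigl(\tfrac12(cr\pm st)\bigr)$ computed from (\ref{jdbe_osnovne}) equals $\tfrac{3\gamma-\alpha}{4}$ or its $\beta$-analogue and one checks consistency with $\deg w_-$.

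For $\deg(d_{-})>0$, write $\delta:=\deg(d_{-})=\gamma-\alpha-\beta$ by Lemma \ref{degdminuslemma}, so $\deg w_{-}=\gamma-\tfrac{\alpha+\beta}{2}=\delta+\tfrac{\alpha+\beta}{2}$. The four subcases (a)–(d) are distinguished by which of $\{z_0,z_1\}$ equals the ``small'' solution ($\pm s$ with $\deg=\tfrac{\alpha+\gamma}{2}$, resp. $\pm t$ with $\deg=\tfrac{\beta+\gamma}{2}$) versus the ``$v_1$-type'' solution $\pm\tfrac12(cr\pm st)$; for each choice, matching the degree of $w_{-}$ (obtained from $v_m=w_n$ with $m,n\in\{0,1\}$, using Lemma \ref{lema_stupnjevi}'s formulas for $\deg v_1,\deg w_1$) against $\delta+\tfrac{\alpha+\beta}{2}$ pins down the inequality between $\gamma$ and $\alpha,\beta$: e.g. $z_0=z_1$ both of $v_1$-type forces $\delta\le\alpha$, i.e. $\gamma\le2\alpha+\beta$, giving (a); one of $v_1$-type and one $=\pm s$ (so $\deg z_1=\tfrac{\alpha+\gamma}{2}$, which must equal $\deg w_{-}$ up to the $v_0$/$v_1$ ambiguity) yields $\alpha\le\delta\le\beta$ and $2\alpha+\beta\le\gamma\le\alpha+2\beta$, giving (b); the edge case where both small solutions are forced equal gives $\alpha=\beta$, $\gamma=3\alpha$, which is (c); and $z_0=\pm t$, $z_1=\pm s$ gives $\beta\le\delta$, i.e. $\gamma\ge\alpha+2\beta$, which is (d). The main obstacle I anticipate is the bookkeeping in case 3: one must carefully use Lemma \ref{lema_stupnjevi} together with Lemma \ref{kongruencije_rj} (the $\deg z_0,\deg z_1\ge\gamma/2$ constraint) to rule out the spurious combinations and to show the stated inequalities are simultaneously necessary, and the boundary overlaps between (a)/(b)/(c)/(d) need to be handled so that every $D(4)$-triple with $\deg d_->0$ falls into at least one listed option — this is where the argument is most delicate rather than merely computational.
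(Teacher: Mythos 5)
Your overall strategy coincides with the paper's: use Lemma \ref{gap0} to force $w_-$ to occur as $v_m=w_n$ with $m,n\in\{0,1\}$, then run a case analysis on $(m,n)$ and match degrees using (\ref{deg1}), (\ref{ineq20}) and $\deg(w_-)=\gamma-\frac{\alpha+\beta}{2}$. However, two substantive steps are missing. First, whenever $w_-$ equals $v_1=\frac{1}{2}(sz_0+cx_0)$ or $w_1=\frac{1}{2}(tz_1+cy_1)$ rather than $v_0$ or $w_0$, knowing the value of $v_1$ (or $w_1$) does not by itself tell you what the fundamental solution $z_0$ (or $z_1$) is; your degree-matching heuristic identifies which of $\pm s$, $\pm t$, $\pm\frac{1}{2}(cr\pm st)$ can equal $w_-$, but not what $z_0,z_1$ then are. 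The paper extracts this by rewriting, e.g., $\frac{1}{2}(tz_1+cy_1)=\pm\frac{1}{2}(cr\pm st)$ as $c(\pm r-y_1)=t(z_1\mp s)$, observing that $\gcd(c,t)$ is a unit because $bc+4=t^2$, and then using the degree bounds (\ref{ineq20}) to force $y_1=\pm r$ and $z_1=\pm s$. This divisibility argument is precisely what produces the pairs in 3(b)--(d) and what eliminates the spurious $(m,n)$ combinations in 2(a) and 2(b); without it the lemma's identifications of $z_0,z_1$ are not established.

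Second, in case 2(a) your claim that $z_1=\pm s$ follows ``symmetrically'' from Lemma \ref{alpha0} is not valid: that lemma requires the coefficient of $z^2$ in the relevant Pellian equation to be constant, which holds for (\ref{jdba_pellova_prva}) (since $\alpha=0$) but not for (\ref{jdba_pellova_druga}) (since $\beta=\gamma>0$), so there is no symmetry to invoke. The paper obtains $z_1=\pm s$ here only through the $(m,n)$ case analysis together with the gcd argument above. (Also, your parenthetical reason that $z_0$ cannot be constant is circular as written; the correct argument is that $z_0=\pm2$ would give $v_0=\pm2$ and $\deg(v_1)=\gamma$, neither of which can equal $\pm s$ of degree $\gamma/2$.) These are gaps in the derivation rather than in the statement: the conclusions you assert are the correct ones, but the route you describe does not reach them.
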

\begin{proof}
 \emph{1.} For $d_{-} = 0$ from (\ref{izrazc}) we get $c_{} = a + b \mp \frac{1}{2}ruv$ and because in this case we have $u = v = \pm 2$, obviously we obtain $c = c_{\pm} = a + b \mp 2r$. Hence, we get $\gamma = \beta.$ In case $\alpha < \beta$, we get $C = B$ and in case $\alpha = \beta$, we get $C = A + B \mp 2\sqrt{AB}$.

From $cd_{-} + 4 = w^2$ we get $w = \pm 2$. Since Lemma \ref{lema_stupnjevi} implies $\deg(w_n)\geq\gamma/2$ for $n\geq 1$, we have $w=w_0 =z_1= \pm 2$ and, similarly, $v=v_0=z_0=\pm 2$.
\smallskip 

    \emph{2.(a)}
    First, we assume that $\alpha=0$ and, as we earlier saw, because we cannot have two different constants in a $D(4)$-quadruple in $\mathbb{Z}[i][X]$, we conclude $d_{-}=a\in\mathbb{Z}[i]$. By (\ref{uvw}) we get $$(a+u)(a-u)=-4, \ \ a, u \in\mathbb{Z}[i].$$ This is possible only for $a=\pm 2i, \ u=0$. Furthermore, we easily conclude
    \begin{equation}\label{lemac}
        c =-b+2a= -b \pm 4i
    \end{equation}
    by (\ref{tu}). So, in this case, we have $\beta = \gamma$.
 The equation $cd_{-} + 4 = w^2$ becomes $\pm 2ci + 4 = w^2$ and, because $\pm 2ci + 4 = s^2$, we get $w = \pm s$. Using  Lemma \ref{gap0} the equation $v_m = w_n = \pm s$ for $m, n\in\{ 0, 1\}$ is obtained.
Now we deal with each case separately.

If we have $(m, n)=(0, 0)$, by (\ref{rekurzija_vm}) and (\ref{rekurzija_wn}), we get $z_0 = z_1 = \pm s$.\\
For $(m, n) = (0, 1)$, by (\ref{rekurzija_vm}) and (\ref{rekurzija_wn}), we get 
\begin{equation}\label{z0z1}
z_0 = \frac{1}{2}(tz_1 + cy_1) = \pm s.
\end{equation}
By (\ref{uvw}) and (\ref{z0z1}) we easily get $\pm \frac{1}{2}(cr \pm st) = \frac{1}{2}(tz_1+cy_1)$, or equivalently,
\begin{equation}\label{lemma2a}
 c(\pm r - y_1) = t(z_1 \mp s).   
\end{equation}
Let $g=\gcd(c,t)$. Because $bc+4=t^2$, we conclude $g\in\mathbb{Z}[i]$. Then $c=gc_1$, $t=gt_1$ for some polynomials $c_1,t_1\in\mathbb{Z}[i][X]$ and $\deg(c_1)=\gamma=\deg(t_1)$. Since $\gcd(c_1,t_1)=1$, we have  $$t_1\mid (\pm r - y_1).$$ From $ab+4=r^2$ we know $\deg (r)=\frac{\gamma}{2}$ and from (\ref{ineq20}) we have $\deg (y_1)\leq \frac{\gamma}{2}$, so, the only possibility is $y_1 = \pm r$ and $z_1 = \pm s$. \\
If $(m, n)=(1, 0)$, we get the equation $\frac{1}{2}(sz_0+cx_0) = z_1 = \pm s,$ or equivalently, $cx_0 = s(\pm 2 - z_0).$
Similarly as before, let $g=\gcd(c,s)$. From $ac+4=s^2$ we know that $g\in\mathbb{Z}[i]$ and there exist $c_1,s_1\in\mathbb{Z}[i][X]$, $\deg(c_1)=\gamma$, $c=gc_1$, $s=gs_1$. Then
$$c_1\mid (\pm2-z_0).$$
From (\ref{deg1}) we get $\deg (z_0)<\gamma$, so we must have $z_0=\pm 2$. But then $cx_0=0$, which implies $x_0=0$ and that is a contradiction to (\ref{jdba_pellova_prva}).\\
Finally, if we set $(m, n) = (1, 1)$, we obtain $\frac{1}{2}(sz_0+cx_0) = \frac{1}{2}(tz_1+cy_1) = \pm s$. Similarly as in case $(m, n) = (1, 0)$, this is not possible.
\smallskip 

\emph{2.(b)}\ Let $d_{-}\in\mathbb{Z}[i]\backslash \{0, a\}$. Because $d_{-}$ is a constant polynomial and we cannot have two or more constants in a $D(4)$-quadruple, we conclude $\alpha>0$.  By Lemma \ref{degdminuslemma}, we know $\gamma = \alpha + \beta$. Using (\ref{uvw}), we get $w = \pm \frac{1}{2}(cr\pm st)\neq s,$ so from Lemma \ref{gap0} we conclude $v_m = w_n = \pm \frac{1}{2}(cr\pm st),$ for some $m, n\in\{0, 1\}$. 

Setting $(m, n)=(0, 0)$, we obtain $z_0 = z_1 = \pm \frac{1}{2}(cr\pm st)$.\\
In the case $(m, n) = (0, 1)$, we get $z_0 = \frac{1}{2}(tz_1 + cy_1) = \pm \frac{1}{2}(cr\pm st)$, which again implies (\ref{lemma2a}), or $y_1 = \pm r$ and $z_1 = \pm s$. From $\gamma = \alpha + \beta$ and (\ref{ineq20}) we easily obtain a contradiction.\\
For $(m, n) = (1, 0)$ we deal with $\frac{1}{2}(sz_0+cx_0) = z_1 = \pm \frac{1}{2}(cr\pm st)$ and get $x_0 = \pm r, \ z_0 = \pm t$, which leads again to contradiction using (\ref{deg1}). \\
Finally, by analogous reasoning as in the cases $(m, n)=(0, 1)$ and $(m, n)=(1, 0)$, we obtain a contradiction in the last case for $(m, n) = (1, 1)$.

\smallskip
 \emph{3.} For $\deg(d_{-})>0$ by Lemma \ref{degdminuslemma} we have $\gamma > \alpha + \beta$. Also, by Lemma \ref{gap0}, $d_{-}$ occurs from $v_m = w_n = \pm\frac{1}{2}(cr \pm st)$ for some $m, n\in \{0, 1\}$. From $\deg (d_-)=\gamma-\alpha-\beta$ and $cd_-+4=w^2$, we get
\begin{equation}\label{degw}
    \deg(w) = \gamma -\frac{\alpha+\beta}{2} < \gamma.
\end{equation}
For start, we consider $(m, n) = (0, 0)$ and, using (\ref{rekurzija_vm}) and (\ref{rekurzija_wn}), we get that $z_0 = z_1 = \pm\frac{1}{2}(cr\pm st)$. By (\ref{degw}) and (\ref{ineq20}), we get $\gamma \leq 2\alpha + \beta$, which by Lemma \ref{degdminuslemma} implies $\deg(d_{-})\leq \alpha$. Thus, $\alpha > 0.$

In the case $(m, n)=(0, 1)$, we again observe the equalities (\ref{z0z1}) and (\ref{lemma2a}). As before we conclude $y_1 = \pm r$ and $z_1 = \pm s.$ Analogously, by (\ref{degw}) and (\ref{deg1}), we get $\gamma \leq \alpha + 2\beta$ which by Lemma \ref{degdminuslemma} leads us to conclusion that $\deg(d_{-})\leq \beta.$ Since $\deg (z_1)=\deg (s)=\frac{\alpha+\gamma}{2}$, inequality (\ref{ineq20}) leads us to $\gamma \geq 2\alpha + \beta$, so we easily get $\deg(d_{-})\geq\alpha.$

The next case that we deal with is $(m, n) = (1, 0)$, where similarly as in the previous case we get $x_0 = \pm r, \ z_0 = \pm t$ for $z_1 = \pm\frac{1}{2}(cr\pm st)$. We get $\alpha + 2\beta \leq \gamma \leq 2\alpha + \beta$ which implies $\alpha = \beta$ and $\gamma = 3\alpha.$ 

Finally, for $(m, n)=(1, 1)$, by following similar arguments as before, we obtain $z_0 = \pm t$ and $z_1 = \pm s$ and consequently, by (\ref{deg1}) and (\ref{ineq20}), we get $\beta \leq \deg(d_{-}) < \gamma$ and $\alpha \geq0$ with $\gamma \geq \alpha+2\beta.$
\end{proof}

\section{Precise definition of the initial terms }

Now it is time to determine all the possible initial terms of the recurring sequences $(v_m)_{m\geq0}$ and $(w_n)_{n\geq0}$ for the extension of the $D(4)$-triple $\{a, b, c\}$ in $\mathbb{Z}[i][X]$. 

\begin{lemma}
\label{initial2}
\begin{enumerate}\mbox{}
    \item If the equation $v_{2m}=w_{2n}$ has a solution, then $z_0 = z_1$.

    \item  If the equation $v_{2m+1}=w_{2n}$ has a solution, then either $(z_0, z_1) = (\pm2, \pm s)$ or $(z_0, z_1) = (\pm s, \pm 2)$ or $z_1 = \frac{1}{2}(sz_0\pm cx_0)$ (where $x_0$ is not a constant polynomial.)
    \item If the equation $v_{2m} = w_{2n+1}$ has a solution, then either $(z_0, z_1) = (\pm t, \pm 2)$ or $(z_0, z_1) = (\pm s, \pm 2)$ or 
    $z_0 = \frac{1}{2}(tz_1\pm cy_1),$ (where $y_1$ is not a constant polynomial.)

    \item If the equation $v_{2m+1}=w_{2n+1}$ has a solution, then either $(z_0, z_1) = (\pm 2, \frac{1}{2}(\pm cr \pm st))$, or $(z_0, z_1) = (\frac{1}{2}(\pm cr \pm st), \pm 2)$, or $\frac{1}{2}(sz_0+cx_0) = \frac{1}{2}(tz_1\pm cy_1)$
or   $\frac{1}{2}(sz_0-cx_0) = \frac{1}{2}(tz_1\pm cy_1)$.
\end{enumerate}
\end{lemma}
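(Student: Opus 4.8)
The plan is to reduce each equality $v_i = w_j$ modulo $c$ by means of Lemma~\ref{initial1}, and then to split into the four parity cases of the statement. Throughout I will use that $\deg c = \gamma$, that $s^2 = ac+4 \equiv 4$, $t^2 = bc+4 \equiv 4$ and $z_0^2 \equiv z_1^2 \equiv 4 \pmod c$ (the last two by (\ref{d0}) and (\ref{d1})), that $2v_1 = sz_0 + cx_0$ and $2w_1 = tz_1 + cy_1$, and that by Lemma~\ref{lema_rj_pellove_jedn} both $\deg z_0 \le \tfrac{3\gamma-\alpha}{4}$ and $\deg z_1 \le \tfrac{3\gamma-\beta}{4}$ are strictly less than $\gamma$, so that a polynomial of degree $<\gamma$ divisible by $c$ must be $0$. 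Part~1 is then immediate: from $v_{2m} = w_{2n}$ and Lemma~\ref{initial1} we get $z_0 \equiv v_{2m} = w_{2n} \equiv z_1 \pmod c$, and $\deg(z_0 - z_1) < \gamma$ forces $z_0 = z_1$.

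Parts~2 and~3 are interchanged by the symmetry $(a,s,x_0,v_m,z_0) \leftrightarrow (b,t,y_1,w_n,z_1)$ (which also swaps the two Pellian equations), so it is enough to carry out Part~2. From $v_{2m+1} = w_{2n}$ and Lemma~\ref{initial1} we obtain $v_1 \equiv z_1 \pmod c$, equivalently $sz_0 \equiv 2z_1 \pmod c$. If $\deg v_1 < \gamma$, then $v_1 - z_1$ has degree $<\gamma$ and is divisible by $c$, hence $v_1 = z_1$: this is the third alternative, the sign of $x_0$ recording the choice of fundamental solution of (\ref{jdba_pellova_prva}), and a degree comparison of $sz_0$ and $cx_0$ in $2v_1 = sz_0 + cx_0$ (using $\deg s = \tfrac{\alpha+\gamma}{2}$, $\deg x_0 \le \tfrac{\alpha+\gamma}{4}$ and the bounds on $\deg z_0$) shows that $\deg v_1 < \gamma$ is incompatible with $x_0$ being constant. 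If $\deg v_1 \ge \gamma$, I would combine $sz_0 \equiv 2z_1 \pmod c$ with $s^2 \equiv z_0^2 \equiv z_1^2 \equiv 4 \pmod c$ and the Pellian equations (\ref{jdba_pellova_prva}), (\ref{jdba_pellova_druga}), appealing to Lemma~\ref{alpha0} in the subcase where $z_0$ or $z_1$ is constant (where (\ref{jdba_pellova_prva}) forces $d_0 = 0$, $z_0 = x_0 = \pm 2$); this leaves exactly $(z_0,z_1) = (\pm 2,\pm s)$, with $x_0 = \pm 2$, and $(z_0,z_1) = (\pm s,\pm 2)$, with $\alpha = 0$, $a = \pm 2i$, $x_0 = 0$.

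Part~4 follows the same scheme. From $v_{2m+1} = w_{2n+1}$ and Lemma~\ref{initial1} we get $v_1 \equiv w_1 \pmod c$, i.e.\ $sz_0 \equiv tz_1 \pmod c$. If $\deg(v_1 - w_1) < \gamma$, then $v_1 = w_1$, one of the two generic alternatives $\tfrac12(sz_0 \pm cx_0) = \tfrac12(tz_1 \pm cy_1)$. Otherwise one of $z_0,z_1$ turns out to equal $\pm 2$; by the $a \leftrightarrow b$ symmetry take $z_0 = \pm 2$, so (\ref{jdba_pellova_prva}) gives $x_0 = \pm 2$, $d_0 = 0$. Then multiplying $sz_0 \equiv tz_1 \pmod c$ by $t$ and reducing with $t^2 \equiv 4 \pmod c$ gives $2z_1 \equiv \pm st \pmod c$, so $c \mid 2z_1 \mp st$; a degree count yields $2z_1 \mp st = cQ$ with $\deg Q = \tfrac{\alpha+\beta}{2} = \deg r$, and squaring, reduced modulo $c$ with $z_1^2 \equiv 4$, $s^2 \equiv 4$, $t^2 \equiv 4$ and $stz_1 \equiv \pm 8 \pmod c$, identifies $Q = \pm r$, that is $z_1 = \tfrac12(\pm cr \pm st)$. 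The symmetric subcase $z_1 = \pm 2$ produces $(z_0,z_1) = (\tfrac12(\pm cr \pm st),\pm 2)$.

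The step I expect to be the main obstacle is the one just glossed over in Parts~2--4, namely showing that the relevant initial terms either literally coincide (the generic alternative) or else one of $z_0, z_1$ is the trivial Pellian solution $\pm 2$. This requires a careful bookkeeping of the degrees of $v_1, w_1$ against those of $z_0, z_1$, drawing on the sharp bounds of Lemmas~\ref{lema_rj_pellove_jedn}, \ref{kongruencije_rj} and \ref{lema_stupnjevi} (and possibly on the finer congruences modulo $c^2$ of Lemma~\ref{lemma 5.1}), after which the companion of the degenerate term is pinned down explicitly via $s^2 \equiv t^2 \equiv 4 \pmod c$ and a single squaring.
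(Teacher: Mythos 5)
Your overall strategy coincides with the paper's: reduce $v_i=w_j$ modulo $c$ via Lemma \ref{initial1}, use $\deg z_0,\deg z_1<\gamma$ to turn congruences into equalities, and separate a generic case (the initial terms of the two sequences literally coincide) from degenerate ones. Part 1 is complete and identical to the paper's argument, and your handling of the $z_0=\pm2$ branch of Part 4 via the product $(\pm st+cr)(\pm st-cr)$ is essentially the paper's.

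There is, however, a genuine gap exactly where you flag one: the dichotomy ``either the initial terms coincide or one of $z_0,z_1$ is $\pm2$ or $\pm s$'' in Parts 2--4 is asserted rather than proved, and it is the heart of the lemma. The paper's mechanism is the factorization
\[
(cx_0+sz_0)(cx_0-sz_0)=4c^2-4ac-4z_0^2 ,
\]
whose right-hand side has degree exactly $2\gamma$; when $x_0$ is non-constant one factor has degree $>\gamma$, so the other has degree $<\gamma$ and, being congruent to $2z_1$ modulo $c$, must equal $2z_1$. (The analogue $\tfrac14(tz_1+cy_1)(tz_1-cy_1)=bc-c^2+z_1^2$ serves Part 3, and both serve Part 4.) This identity, absent from your write-up, is what produces the generic alternative with an undetermined sign and what forces $x_0$ to be constant in the residual case. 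Relatedly, your split on ``$\deg v_1<\gamma$'' versus ``$\deg v_1\ge\gamma$'' does not cleanly separate generic from degenerate: with $x_0$ non-constant it can happen that $\tfrac12(sz_0+cx_0)$ has degree $\ge\gamma$ while $\tfrac12(sz_0-cx_0)$ has degree $<\gamma$ and equals $z_1$ --- still the generic third alternative. You gesture at repairing this by flipping the sign of $x_0$, but then the case where both signs give degree $\ge\gamma$ still needs the identity above to conclude $x_0$ is constant, after which Lemma \ref{alpha0} (for $\alpha=0$) and the direct computation for $z_0=\pm2$ (whose subcase $\alpha=\beta=\gamma$ the paper eliminates using Lemma \ref{degsc}) yield the listed degenerate pairs. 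The skeleton is right, but the load-bearing step is missing.
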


\begin{proof}

 \emph{1.} For $v_{2m} = w_{2n}$ we use the congruences from Lemma \ref{initial1} and get $z_0 \equiv z_1 \pmod{c}$. According to  (\ref{deg1}), we conclude $\deg(z_0)<\gamma,$ $\deg(z_1) < \gamma$, and consequently $z_0 = z_1$.
    
 \emph{2.} We have $v_{2m+1} = w_{2n}.$ According to congruences from Lemma \ref{initial1} we conclude $v_1 \equiv z_1 \pmod{c}$, or more precisely $sz_0 + cx_0 \equiv 2z_1 \pmod{c}$, i.e. $sz_0 \equiv 2z_1 \pmod{c}$.\\
    First, we observe the case $z_0=\pm 2$ and $x_0=\pm 2$. If $\alpha<\gamma$, for $z_0 = 2,$ we get $z_1 = s$ and for $z_0=-2$ we get $z_1=-s$. If $\alpha=\beta=\gamma$, from Lemma \ref{degsc} we know $c=a+b\pm 2r$, and by observing degrees of $v_{2m+1}$ and $w_{2n}$ we get a contradiction.\\
    For $z_0$ which is not a constant polynomial, we know from Lemma \ref{kongruencije_rj} that $\deg(z_0) \geq \frac{\gamma}{2},\  \deg(x_0) \geq \frac{\alpha}{2}.$ 
    We first assume that $x_0$ is a constant polynomial, meaning $\alpha=0$.  Lemma \ref{alpha0} implies $z_0=\pm s$. Since $s^2\equiv 4\ (\bmod \ c)$, from $sz_0 \equiv 2z_1 \pmod{c}$ we have $z_1= 2$ if $z_0=s$ and $z_1=-2$ if $z_1=-s$.\\
    If $x_0$ is not a constant, then from
\begin{equation}\label{lemma41-2}
(cx_0+sz_0)(cx_0-sz_0) = 4c^2 - 4ac - 4z_0^2,
\end{equation}
    we conclude that one of two polynomials, $(cx_0+sz_0)$ or $(cx_0-sz_0)$ has degree less that $\gamma$, and it is congruent to $2z_1 \pmod{c}$. Hence, equality $z_1=\frac{1}{2}(cx_0\pm sz_0)$ holds.

 \emph{3.} 
 According to Lemma \ref{initial1} we have $z_0 \equiv \frac{1}{2}(tz_1 + cy_1) \pmod{c}$. \\
    For $z_1 = \pm 2$, we first observe a case when $\beta<\gamma$ and get  $2z_0 =tz_1$, implying $(z_0,z_1)=\pm(t,2)$.
    If $\beta=\gamma$, then from Lemma \ref{degdminuslemma} we have $\deg (d_-)=0$ and from Remark \ref{Rem_dminus} we know $d_-=a=\pm 2i$ or $c=a+b\pm 2r$.
    In the first case, we get $z_0=\pm 2$. Then $y_1=\pm 2$ and $x_0=\pm 2$, so $v_1=\pm s \pm c$ and $w_1=\pm t\pm c$. Since $c=-b+2a$ and $t=\pm i(b-a)$, we see that $\deg (v_1)=\deg (w_1)=\gamma$. By using Lemma \ref{lema_stupnjevi} we get a contradiction. In second case, when $c=a+b\pm 2r$, we get $z_0=\pm s$.\\
    If $z_1$ is not a constant polynomial, then we know that $\deg(z_1) \geq \frac{\gamma}{2}$ and $\deg(y_1)\geq \frac{\beta}{2}$.  We have 
    \begin{equation}\label{initial200}
    \frac{1}{4}(tz_1+cy_1)(tz_1-cy_1) = bc - c^2 +z_1^2
    \end{equation}
    and one of the polynomials $\frac{1}{2}(tz_1+cy_1)$ or $\frac{1}{2}(tz_1-cy_1)$ has degree less than $\gamma$ and they are both congruent to $z_0 \pmod {c}$. So, one of these polynomials is $z_0$.

\emph{4.} Finally, in the last case we have $v_{2m+1} = w_{2n+1}$, and analogously as in previous cases, we conclude $sz_0 \equiv tz_1 \pmod{c}$. If $x_0, y_1$ are not constants, then again we conclude that one of the polynomials $\frac{1}{2}(cx_0+sz_0), \frac{1}{2}(cx_0-sz_0)$ and one of the polynomials $\frac{1}{2}(cy_1+tz_1), \frac{1}{2}(cy_1-tz_1)$ have degrees less than $\gamma$ and are congruent to each other modulo $c$, so they have to be equal.

    If $z_0 = \pm 2$, we get $4z_1 \equiv \pm2  st \pmod{c}$, which can be expressed as $4z_1 \equiv \pm
    2 (st \pm cr) \pmod{c}$. In the equality $$4(\pm st + cr)(\pm st -cr) = 16ac+16bc+64-16c^2, $$ one of the polynomials within parentheses has degree less than $\gamma$, the other one obviously has degree $\gamma + \frac{\alpha+\beta}{2}$. So, $\frac{1}{2}(\pm st \pm cr) = z_1$ and in this case $\deg (z_1) \leq \gamma - \frac{\alpha+\beta}{2}$. Recall that we have $cd_-+4 = z_1^2$, so $\deg (z_1)=\frac{\gamma}{2}+\frac{\deg (d_-)}{2}$.
    \\ If $\deg(z_1) < \gamma -\frac{\alpha+\beta}{2}$, Lemma \ref{degdminuslemma} implies $d_-=0$, $\beta=\gamma$ and $z_1 = \pm 2$.
     \\ If $\deg(z_1) = \gamma -\frac{\alpha+\beta}{2}$, because it has to be $\deg(z_1)\leq \frac{3\gamma-\beta}{4}$, it follows $\gamma \leq 2\alpha + \beta$. As a special case, let's observe the case $\beta = \gamma$. If $\alpha > 0$, we get $\deg(z_1)<\frac{\gamma}{2}$. So, in this case, we have $d_-=0$ and $z_1 = \pm 2$. If $\alpha = 0$, then $\deg(z_1) = \frac{\gamma}{2}$. So, $\deg(d_-)=0$ and $d_-=a$ while $z_1 = \pm s$.

    Now we deal with the case when $z_0\neq \pm 2$ and $x_0$ is a constant. As above, $z_0 = \pm s$, so $\pm \frac{1}{2} s^2 \equiv \frac{1}{2}tz_1 \pmod{c}$, or $\pm 2 \equiv \frac{1}{2}tz_1 \pmod{c}$. After multiplying the congruence by $t$, we obtain $\pm 2t \equiv \pm 2z_1 \pmod{c}$. If $\beta < \gamma$, we have $z_1 = \pm t$ and $y_1^2 = b^2 + 4$, a contradiction. If $\beta = \gamma$, we can write $\pm 4\equiv tz_1 \pm cy_1 \pmod{c}$. If $\deg(z_1)\geq \frac{\gamma}{2}$, then $\deg(y_1)\geq \frac{\beta}{2}$ and $y_1 $ is not a constant. As above, we obtain a contradiction.
    
   Consider now a general case when $\alpha \leq \beta \leq \gamma$ and $z_1 = \pm 2$. After multiplying the congruence $sz_0 \equiv tz_1 \pmod{c}$ by $s$, we get $4z_0 \equiv \pm 2st \pmod{c}$ and we can write $4z_0 \equiv \pm 2(cr\pm st) \pmod{c}$. Since $\deg(z_0) < \gamma - \frac{\alpha+\beta}{2}$, we conclude $z_0 = \pm\frac{1}{2}(cr \pm st)$. As in the case $z_1=\pm\frac{1}{2}(cr \pm st)$ we can observe separately the case $\beta=\gamma$. If $\alpha>0$, we have  $d_-=0$.  Since $ad_-+4=z_0^2$, then $z_0=\pm 2$. If $\alpha=0$ then $\deg(d_-)=0$ and $d_-=a$ imply $z_0=\pm s$.
 \hfill\qedhere
\end{proof}

By using Lemmas \ref{degdminuslemma} and \ref{degreesd} we get the next result.
\begin{lemma}\label{lemma 3.7}
  Let $\{a, b, c\}$ be a $D(4)$-triple in $\mathbb{Z}[i][X]$ with $\beta < \gamma = \alpha + 2\beta$. In this case, $(a, b, d_-, c)$ is one of the following:
  \begin{enumerate}[i)]
      \item $(a,b,a+b+ 2r,r(r+ a)(b+ r))$ and $s=\pm(a(b+r)+2)$, $t=\pm (b(a+r)+2)$,
      \item $(a,b,a+b-2r,r(r- a)(b- r))$ and $s=\pm(a(b-r)+2)$, $t=\pm(b(r-a)-2)$,
      \item $( \pm 2i, \ b, \ -b\pm 4i, \ \mp 2ib^2- 8b \pm 10i)$ and $s=\pm(\mp2b+4i)$, $t=\pm (bir\pm r)$.
  \end{enumerate}
\end{lemma}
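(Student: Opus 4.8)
\emph{Overall strategy.} Since $d_{-}$ is a regular extension of $\{a,b,c\}$, the quadruple $\{a,b,c,d_{-}\}$ is regular, i.e. $(a+b-c-d_{-})^{2}=(ab+4)(cd_{-}+4)=r^{2}(cd_{-}+4)$; this relation is quadratic in $c$ and will pin $c$ down once $d_{-}$ is known. So the whole problem reduces to determining $d_{-}$. First, since $\beta<\gamma$, Lemma \ref{degreesd}(1) forbids $d_{-}=0$ (that alternative forces $\beta=\gamma$), so Lemma \ref{degdminuslemma} together with $\gamma=\alpha+2\beta$ gives $\deg(d_{-})=\gamma-\alpha-\beta=\beta$. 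In particular $\{a,b,d_{-}\}$ is a $D(4)$-triple whose two elements of largest degree, $b$ and $d_{-}$, both have degree $\beta$.

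\emph{Determining $d_{-}$.} Apply to the triple $\{a,b,d_{-}\}$ the reasoning of Lemma \ref{degdminuslemma} (and Remark \ref{Rem_dminus}) in the equal-top-degree situation: its own smaller regular extension $E:=d_{-}(\{a,b,d_{-}\})$ satisfies $E=0$ or $\deg(E)=\beta-\alpha-\beta=-\alpha$. The second alternative forces $\alpha=0$ and $\deg(E)=0$; then $a$ and $E$ are both constant while $b,d_{-}$ are not, so if $E\neq a$ the set $\{a,b,d_{-},E\}$ would be a genuine $D(4)$-quadruple with two constant elements, contradicting Lemma \ref{lema_samo_jedan_konstantan}; hence $E=a$.

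\emph{Branch A: $E=0$.} Then $\{a,b,d_{-}\}$ is regular, i.e. $a^{2}+b^{2}+d_{-}^{2}-2ab-2ad_{-}-2bd_{-}-16=0$, which rearranges to $(d_{-}-a-b)^{2}=4(ab+4)=4r^{2}$, so $d_{-}=a+b\pm2r$. Putting $d_{-}=a+b+2r$ into $(a+b-c-d_{-})^{2}=r^{2}(cd_{-}+4)$ gives $(c+2r)^{2}=r^{2}c(a+b+2r)+4r^{2}$, i.e. $c\bigl(c+4r-r^{2}(a+b+2r)\bigr)=0$; discarding $c=0$ and using $r^{2}=ab+4$ yields $c=r(r+a)(b+r)$, and then a direct computation gives $ac+4=(a(b+r)+2)^{2}$, $bc+4=(b(a+r)+2)^{2}$, hence $s=\pm(a(b+r)+2)$, $t=\pm(b(a+r)+2)$, which is case (i). The choice $d_{-}=a+b-2r$ (replace $r$ by $-r$) gives case (ii).

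\emph{Branch B: $\alpha=0$ and $E=a$.} Writing out $d_{-}(\{a,b,d_{-}\})=a$ (or using Vieta on $w^{2}-Sw+P=0$ with $S=2(a+b+d_{-})+abd_{-}$ and $P=a^{2}+b^{2}+d_{-}^{2}-2ab-2ad_{-}-2bd_{-}-16$) yields the identity $(b-d_{-})^{2}=a^{2}bd_{-}+4a(b+d_{-})+16$. Its right‑hand side has degree $2\beta$, so the leading coefficients of $b,d_{-}$ must be distinct; comparing leading coefficients and using that $bd_{-}+4$ is a square forces $a^{2}+4$ to be a square in $\mathbb{Z}[i]$, and since $a\neq0$ the factorization $(a-k)(a+k)=-4$ in $\mathbb{Z}[i]$ leaves only $a=\pm2i$ (with $k=0$), exactly as in the proof of Lemma \ref{degreesd}(2)(a) and Lemma \ref{alpha0}. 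With $a^{2}=-4$ the displayed identity collapses to $(b+d_{-}-2a)^{2}=0$, so $d_{-}=2a-b=-b\pm4i$. Substituting $d_{-}=-b+4i$ (taking $a=2i$; $a=-2i$ is symmetric) into $(a+b-c-d_{-})^{2}=r^{2}(cd_{-}+4)$ gives a quadratic in $c$ whose roots are $c=2i=a$ and $c=-2ib^{2}-8b+10i$; since $c\neq a$ we get $c=-2ib^{2}-8b+10i$, and then $ac+4=4(b-2i)^{2}$, $bc+4=r^{2}(bi+1)^{2}$, so $s=\pm(-2b+4i)$ and $t=\pm r(bi+1)$, which is case (iii).

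\emph{Main difficulty.} The only genuinely delicate step is Branch B: proving that the exceptional equal-top-degree sub-triple forces $a=\pm2i$, which requires care because $\{a,b,d_{-},E\}$ is then the degenerate "quadruple" with $E=a$, so one cannot invoke the $D(4)$-quadruple machinery directly and must instead compare leading coefficients and exploit the factorizations of $-4$ in $\mathbb{Z}[i]$ (this is what distinguishes the Gaussian-integer setting). The recovery of $c$, $s$ and $t$ in Branches A and B is routine polynomial algebra; the only bookkeeping there is matching the $\pm$ signs of $d_{-}$, $s$, $t$ so that they appear as stated.
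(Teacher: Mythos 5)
Your proof is correct and follows essentially the route the paper intends (the paper omits the proof, citing Lemmas \ref{degdminuslemma} and \ref{degreesd}): force $\deg(d_-)=\beta$, apply the $d_-$-analysis of Remark \ref{Rem_dminus} to the sub-triple $\{a,b,d_-\}$ whose top two degrees are equal, and recover $c$ as its upper regular extension. The one wobbly spot is your Branch B justification that $a^2+4$ is a square "by comparing leading coefficients" of $(b-d_-)^2=a^2bd_-+4a(b+d_-)+16$ (that comparison only yields $(B-D)^2=a^2T^2$, which is vacuous); the clean source is (\ref{uvw}) applied to the regular extension $E=a$ of $\{a,b,d_-\}$, giving $a^2+4=aE+4=u^2$ directly, after which your factorization of $-4$ in $\mathbb{Z}[i]$ is exactly the paper's argument from Lemma \ref{alpha0}.
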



We consider the intersection of the recurrent sequences $(\ref{rekurzija_vm})$ and $(\ref{rekurzija_wn})$, i.~e.~we are interested in the solution of the equation $v_m = w_n$.  The next proposition treats all possible $(m, n)$ such that the condition $\{0, 1, 2\} \cap \{m, n\} \neq \emptyset$ and Lemma \ref{degineq} hold, proving that the only possibilities obtained in these cases are $d = d_+$ or $\deg(d)<\gamma$. 

\begin{proposition}\label{proposition_1_jurasic}
Let $\{a, b, c\}$ be a polynomial $D(4)$-triple. We assume that $v_m = w_n$. If $\{0, 1, 2\} \cap \{m, n\} \neq \emptyset$, then either $\deg(d) < \gamma$ or $d = d_+$. For $(m, n) \in \{ (1, 1), (1, 2), (2, 1), (2, 2)\}$ we get $d = d_+.$
\end{proposition}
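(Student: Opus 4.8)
The plan is to exhaust the small-index cases of the equation $v_m=w_n$ by combining three tools already developed in the excerpt: the degree formulas of Lemma \ref{lema_stupnjevi} (together with the bounds \eqref{deg1}, \eqref{ineq20} on $\deg z_0,\deg x_0,\deg z_1,\deg y_1$), the gap relation $n-1\le m\le 2n+1$ of Lemma \ref{degineq}, and the structural identification of the initial terms in Lemma \ref{initial2}. First I would treat the genuinely ``small'' cases where one of $m,n$ lies in $\{0,1,2\}$ and the other is forced to be small by Lemma \ref{degineq}: if $m=0$ then $n\in\{0,1\}$, if $n=0$ then $m\in\{0,1\}$, if $m=1$ then $n\in\{0,1,2\}$, if $n=1$ then $m\in\{0,1,2,3\}$, and if $m=2$ then $n\in\{1,2,3\}$, while $n=2$ gives $m\in\{1,2,3,4,5\}$. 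For each such pair I would write down the explicit polynomials $v_m,w_n$ from the recurrences \eqref{rekurzija_vm}, \eqref{rekurzija_wn}, read off the parity of the indices, and invoke the appropriate part of Lemma \ref{initial2} to pin down $(z_0,z_1)$.

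Next, for each resulting configuration of $(z_0,z_1)$ I would compute $\deg(v_m)$ and $\deg(w_n)$ using \eqref{degvm}, \eqref{degwn} and compare. In many sub-cases the degrees are simply incompatible (the gap principle of Lemma \ref{degreesd} on $\alpha,\beta,\gamma$ rules them out), so those pairs cannot give a solution at all. In the remaining sub-cases I would show that the common value $v_m=w_n=:z$ satisfies either $z=z_0=z_1=\pm\frac12(cr\pm st)$-type relations forcing $\deg(d)<\gamma$ via $cd+4=z^2$ and Lemma \ref{degdminuslemma}, or else $z$ has degree exactly $\alpha+\beta+\gamma+\text{(something)}$ matching $d=d_+$ by \eqref{degdplus}. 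Concretely, the cases $(0,0)$ and $(0,1),(1,0),(1,1)$ with the ``constant'' branches of Lemma \ref{initial2} push us into $\deg(d)<\gamma$, whereas when both indices force the non-constant branches ($z_0=\pm t$, $z_1=\pm s$, coming from $v_{2m+1}=w_{2n+1}$ with $m,n\ge1$) the degree count is exactly that of $d_+$. The last sentence of the proposition — $(m,n)\in\{(1,1),(1,2),(2,1),(2,2)\}$ yields $d=d_+$ — is the cleanest part: in all four of these the relevant Lemma \ref{initial2} branch is the non-constant one, Lemma \ref{kongruencije_rj} gives $\deg z_0,\deg z_1\ge\gamma/2$, and then \eqref{degvm}–\eqref{degwn} force $\deg(v_1)$ and $\deg(w_1)$ to their maximal allowed values, which after substituting back into $d=(v_m^2-4)/c$ yields $\deg(d)=\alpha+\beta+\gamma$, i.e. $d=d_+$ by \eqref{degdplus} and Remark \ref{degdplusminus}.

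The main obstacle I anticipate is bookkeeping in the ``mixed parity'' cases — for instance $v_3=w_1$ or $v_1=w_2$ or $v_5=w_2$ — where Lemma \ref{initial2} offers several alternatives for $(z_0,z_1)$ (including the $(\pm2,\pm s)$, $(\pm s,\pm2)$, $(\pm t,\pm2)$ exceptional pairs), and for each alternative one must separately run the degree comparison and also watch for the constant-polynomial degeneracies ($\alpha=0$, $\beta=\gamma$, $a=\pm2i$) that Lemma \ref{alpha0} and Remark \ref{Rem_dminus} govern. I would organize this as a table indexed by $(m,n)$ and by the Lemma \ref{initial2} branch, and in each cell either derive a contradiction from \eqref{deg1}, \eqref{ineq20}, Lemma \ref{degineq}, Lemma \ref{degreesd}, or conclude $\deg(d)<\gamma$ or $d=d_+$. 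The only real subtlety beyond routine degree arithmetic is making sure that when a sub-case produces $v_m=w_n$ with a degree strictly between the $d_-$-range and $\deg(d_+)$, it is genuinely excluded by the gap inequalities on $\alpha,\beta,\gamma$ rather than merely by an ad hoc estimate; this is where I would lean most heavily on Lemma \ref{degreesd}.
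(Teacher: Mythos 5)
Your enumeration of the index pairs via Lemma \ref{degineq} and your use of Lemma \ref{initial2} to pin down $(z_0,z_1)$ match the paper's framework, but there is a genuine gap in how you propose to reach the conclusion $d=d_+$. A degree computation can at best show $\deg(d)=\alpha+\beta+\gamma=\deg(d_+)$; it cannot show that $d$ \emph{is} the specific polynomial $d_+$, which is what the proposition asserts. (And even the degree claim fails in general: e.g.\ for $v_2=w_2$ with $z_0=z_1=\pm2$ one gets $\deg(v_2)=\tfrac{\alpha+\gamma}{2}+\deg(v_1)$, whence $\deg(d)=2\deg(v_2)-\gamma$, which need not equal $\alpha+\beta+\gamma$.) The paper closes this gap by exact algebra on the Pellian system: for $(m,n)=(2,2)$ it derives the identity $sx_0-ty_1=(b-a)z_0$ from the recurrences, squares it and combines it with (\ref{jdba_pellova_prva})--(\ref{jdba_pellova_druga}) to get $tx_0-sy_1=\pm2(b-a)$, and then eliminates to obtain $2x_0\pm at=\pm rs$, i.e.\ $x_0=\pm\tfrac12(rs\pm at)$ and hence $v_2=\pm\tfrac12(st\pm cr)$, so that $d=(v_2^2-4)/c$ is literally $d_\pm$. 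Nothing in your plan produces such an identification; "the relevant Lemma \ref{initial2} branch is the non-constant one" is also unjustified for $v_{2}=w_{2}$, where Lemma \ref{initial2} only gives $z_0=z_1$ and the constant branches remain possible a priori.

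You also underestimate the mixed-parity cases. For $(m,n)=(2,3)$, after the two exceptional branches are killed by degree comparisons (which your plan would handle), the main branch $z_0=\tfrac12(tz_1\pm cy_1)$ requires a long chain of exact identities: from $v_2=w_3$ one gets $az_0+sx_0=2y_1+2bz_0$, then the factorization $(az_0+sx_0)(az_0-sx_0)=4a^2-4ac-4x_0^2$ forces $\alpha=\beta$, then $\deg(b-a)=0$, then $c=a+b\pm2r$ with $b-a=k$ constant, leading to $r=\pm a\pm2$, $s=\pm(t\pm4)$, and finally $z_0=\pm2$ and $\gamma=0$, a contradiction. This is not "routine degree arithmetic" or a table of incompatibilities resolvable by Lemma \ref{degreesd}; it is the same kind of explicit resolution of the fundamental solutions as in the $(2,2)$ case. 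To repair your proposal you would need to add, for each surviving cell of your table, an algebraic argument that determines $x_0,y_1,z_0,z_1$ (or derives a contradiction) exactly, in the spirit of \cite[Proposition 1]{dujjur}.
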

\begin{proof}
From Lemma \ref{degineq} and the condition $\{0, 1, 2\} \cap \{m, n\} \neq \emptyset$ we get 
\begin{align*}(m, n) \in \{ &(0, 0), (0, 1), (1, 0), (1, 1), (1, 2), (2, 1),\\
&(3, 1), (2, 2), (2, 3), (3, 2), (4, 2), (5, 2)\}.
\end{align*}

{To illustrate specific details and different techniques, we provide proof only for the cases $(m, n) = (2, 2)$ and $(m, n) = (2, 3)$. Other cases can be proved similarly. The proof follows ideas from \cite[Proposition 1]{dujjur}}.

Let  $(m, n) = (2, 2)$, i.e. $z = v_2 = w_2$. From Lemma \ref{initial2}, we know that $z_0 = z_1$. Hence, (\ref{rekurzija_vm}) and (\ref{rekurzija_wn}) together with (\ref{jdbe_osnovne}) imply
\begin{equation}\label{m2n21}
sx_0-ty_1 = (b-a)z_0.\end{equation}
From the initial system of Pellian equations (\ref{jdba_pellova_prva}) and (\ref{jdba_pellova_druga}) we have
\begin{equation}\label{m2n22}
(b-a)^2z_0^2 = 4(b-a)^2 + (b-a)(cy_1^2-cx_0^2),
\end{equation}
and after squaring (\ref{m2n21}), we get
\begin{equation}\label{m2n23}
(sx_0-ty_1)^2 = (tx_0-sy_1)^2+(b-a)(cy_1^2-cx_0^2).
\end{equation}
Combining (\ref{m2n21}), (\ref{m2n22}) and (\ref{m2n23}), it is obtained
\begin{equation}\label{m2n24}
tx_0-sy_1 = \pm 2 (b-a).
\end{equation}
From (\ref{jdba_pellova_prva}) and (\ref{jdba_pellova_druga})  we have
$s^2(bx_0^2+4(a-b)) = as^2y_1^2,$
and (\ref{m2n24}) leads to
$a(tx_0\mp 2(b-a))^2 = as^2y_1^2.$
Hence,
\begin{equation*}\label{m2n26}
(b-a)(4x_0^2\pm 4atx_0+a^2t^2) = (b-a)(ab+4)(ac+4),
\end{equation*}
which leads to $2x_0\pm at =  \pm rs$.
Since $x_0 = \pm \frac{1}{2}( rs \pm at)$, from (\ref{jdba_pellova_prva}) we get $z_0=\pm\frac{1}{2}(cr\pm st)$, and since $z_1=z_0$, we have  $y_1=\pm\frac{1}{2}(rt\pm bs)$ from (\ref{jdba_pellova_druga}). There are $8$ combinations to observe, but only two of them satisfy (\ref{m2n21}) and (\ref{m2n24}), namely $x_0=\frac{1}{2}( rs \pm at)$, $z_0=-\frac{1}{2}(cr\pm st)$, $y_1=\frac{1}{2}(rt\pm bs)$ and $x_0=-\frac{1}{2}( rs \pm at)$, $z_0=\frac{1}{2}(cr\pm st)$, $y_1=-\frac{1}{2}(rt\pm bs)$. From those we obtain  $v_2 =\pm \frac{1}{2}( st \pm cr)$. It is now easy to see $d=(v_2^2-4)/c=d_{\pm}$, i.e. $d=d_+$ or $\deg (d)<\gamma$.

Let $(m, n) = (2, 3)$. We have $z = v_2 = w_3$.
We observe all possible $(z_0,z_1)$ from Lemma \ref{initial2}. First, consider the case $z_1 = \pm 2$ and $z_0 = \pm t.$ From (\ref{jdba_pellova_prva}) and (\ref{jdba_pellova_druga}), we get $x_0 = \pm r$ and $y_1 = \pm 2$. From the proof of Lemma \ref{initial2} we know that in this case $\beta < \gamma$. Then $\deg (v_1)=\gamma\pm(\alpha+\beta)/2$ and $\deg (w_1)=\gamma$. From Lemma \ref{lema_stupnjevi}
 we have $\deg(w_3) = \beta + 2\gamma$ and $\deg(v_2)\in\{(2\alpha+\beta+3\gamma)/2,(3\gamma-\beta)/2\}$. Since $v_2=w_3$, we get obvious contradictions in both cases.
   
 Now, let's deal with the case when $z_1 = \pm 2$ and $z_0 = \pm s$. According to the proof of Lemma \ref{initial2},  we have $c=a+b\pm2r$ and $\beta=\gamma$. In this case, $y_1 = \pm 2$ and from (\ref{jdba_pellova_prva}) we have $x_0^2 = a^2 + 4$ which implies $\deg (x_0)=\alpha=0$ and $x_0=0$, $a=\pm 2i$. Since we also know that $t=\pm(b\pm r)$ it is easy to see from (\ref{rekurzija_wn}) and (\ref{rekurzija_vm}) that $\deg (w_1)\in\{\gamma,\gamma/2\}$ and $\deg (v_1)=\gamma$. From Lemma \ref{lema_stupnjevi} we have $\deg (w_3)\in \{3\gamma,5\gamma/2\}$ and $\deg (v_2)=3\gamma/2$. So $v_2=w_3$ cannot hold for $\gamma>0$.

   Now, let's observe the case $z_1 \neq \pm 2$. Notice that (\ref{rekurzija_vm}) and (\ref{rekurzija_wn}) and $v_2=w_3$ imply 
   \begin{equation}\label{v2w3}
z = z_0 + \frac{1}{2}c(az_0+sx_0) = \frac{1}{2}tz_1+\frac{1}{2}c(btz_1+3y_1)+\frac{1}{2}bc^2y_1.
    \end{equation}
   
   From Lemma \ref{initial2}, we have $z_0 = \frac{1}{2}(tz_1\pm cy_1)$, such that $\deg (z_0)<\gamma$. First, we assume 
    $z_0 = \frac{1}{2}(tz_1-cy_1).$
    Then (\ref{v2w3}) implies
    \begin{equation}\label{v2w3add1}
    az_0 + sx_0 = b(tz_1+cy_1) + 4y_1.
    \end{equation}
    Since one of the polynomials $\frac{1}{2}(tz_1\pm cy_1)$ has the degree $\gamma + \deg(y_1)$, this must be the polynomial $\frac{1}{2}(tz_1+cy_1)$. By (\ref{v2w3add1}) we get $\deg(az_0 + sx_0) = \beta + \gamma + \deg(y_1)$. But, on the other hand, from (\ref{deg1}) we have $$\deg(az_0 + sx_0) \leq \max\{\deg(az_0), \deg(sx_0)\}\leq \frac{3\alpha+3\gamma}{4},$$ which implies $\gamma \leq -\beta$, a contradiction.
    
    Now we assume 
    $z_0 = \frac{1}{2}(tz_1+cy_1).$ Notice that $w_1=z_0$.
From (\ref{v2w3}) we have
\begin{equation}\label{v2w3add21}
    az_0 + sx_0 = 2y_1 + 2bz_0.
\end{equation}
 Since $z_0 \neq \pm 2$, by Lemma \ref{kongruencije_rj}, it is obtained $\deg(z_0)\geq \frac{\gamma}{2}$ and $\deg(x_0)\geq \frac{\alpha}{2}$. If $x_0$ is a constant then $\alpha = 0$. By Lemma \ref{alpha0} we have $z_0=\pm s$ and $x_0^2=a^2+4$, so $\deg(z_0)= \frac{\gamma}{2}$ and $x_0=0$. Then (\ref{v2w3add21}) becomes $az_0=2y_1+2bz_0$ and after comparing degrees on both sides of that equality we have $\gamma/2=\beta+\gamma/2$, which is a contradiction with $\beta>0$.
So, $x_0$ cannot be a constant. Since 
\begin{equation}\label{v2w3add3}
(az_0+sx_0)(az_0-sx_0) = 4a^2 -4ac-4x_0^2,
\end{equation}
 we get that one of the polynomials $az_0\pm sx_0$ has a degree less or equal to $\frac{\alpha+\gamma}{2}+\deg(x_0)$ and the other one has degree  $\frac{\alpha+\gamma}{2}-\deg (x_0)$. 
But (\ref{v2w3add21}) implies 
   $ \deg(az_0+sx_0) = \beta + \deg(z_0)\geq \beta+\gamma/2,$
so we must have $\deg(az_0+sx_0) = \frac{\alpha+\gamma}{2} + \deg(x_0)$. From  $\beta +  \deg(z_0)=\frac{\alpha+\gamma}{2}+\deg(x_0)$ and 
$\deg (z_0)=\frac{\gamma-\alpha}{2}+\deg (x_0)$ 
we conclude $\alpha = \beta$. If we transform (\ref{v2w3add21}) into 
\begin{equation}\label{v2w3add5}
    sx_0-az_0 = 2y_1+2z_0(b-a),
\end{equation}
we get 
\begin{equation}\label{v2w3add6}
    \deg(2y_1+2z_0(b-a))\leq \frac{\alpha+\gamma}{2}-\deg(x_0).
\end{equation}
If $\deg(b-a)>0$, then (\ref{v2w3add6}) leads us to the conclusion $\deg(x_0)<\frac{\alpha}{2}$, a contradiction. Hence, $\deg(b-a)=0$ and by (\ref{v2w3add6})  we obtain $\deg(z_0) = \frac{\gamma}{2}$, so $\deg(x_0) = \frac{\alpha}{2}$.  Additionally, we can rearrange (\ref{v2w3add5}) into the equation
\begin{equation}\label{v2w3add7}
    sx_0-2y_1-bz_0 = z_0(b-a),
\end{equation}
where the degree of the right-hand side is $\frac{\gamma}{2}$. From $z_0 = \frac{1}{2}(tz_1+cy_1)$ and (\ref{jdbe_osnovne}), we get $sx_0-2y_1-bz_0 = sx_0-\frac{1}{2}t(bz_1+ty_1)$, so
\begin{equation}\label{gammapol}
    \deg(sx_0-\frac{1}{2}t(bz_1+ty_1)) = \frac{\gamma}{2}.
\end{equation}
  First, let's observe the case $0<\alpha = \beta < \gamma.$ From (\ref{jdbe_osnovne}) and (\ref{jdba_pellova_druga}), we conclude that one of the polynomials $\frac{1}{2}(bz_1\pm ty_1)$ has a degree $\beta+\deg(z_1)$
 and the other  $\gamma - \deg(z_1)$. None of these possibilities satisfies (\ref{gammapol}).\\
   It remains to observe the case $0< \alpha = \beta = \gamma$. In this case we have $\deg(x_0) = \deg(z_0) = \deg(z_1) = \deg(y_1) = \frac{\gamma}{2}$. According to Lemma \ref{degsc} we get $c=a+b\pm 2r$. We set $b-a=k$, for $k\in \mathbb{Z}[i]$. From (\ref{jdbe_osnovne}) we get
   \begin{equation}\label{jdba_a_b_k}a^2+ka+4 = r^2.\end{equation} 
   If we denote the leading coefficients of the polynomials $a$ and $r$ by $A_1$ and $R_1$, respectively, it follows $A_1 = \pm R_1$. Note that in the case $A_1=-R_1$ we have $s=\pm(a-r)$ and $t=\pm(b-r)$ and if $A_1=R_1$ then $s=\pm(a+r)$ and $t=\pm(b+r)$. 
   Equality (\ref{jdba_a_b_k}) transforms into $(a+ 2)^2 - 4a + ka = r^2$.
   We conclude $$a(k- 4) = (r-a- 2)(r+a+ 2).$$
   One of the polynomials $r-a- 2$ and $r+a+ 2$ is a constant polynomial. Let's first assume that $r+a+ 2$ is a constant polynomial, i.e. $A_1=-R_1$. If we observe these polynomials as polynomials in $\mathbb{C}[X]$, we have $a\cdot p=r-a- 2$ where $p=(k-4)/(r+a+ 2)$ is a constant polynomial in $\mathbb{C}[X]$. Hence, $a(p+1)=r- 2$. After comparing leading coefficients, we have $A_1(p+1)=R_1=-A_1$, i.e. $p+1=-1$. So, 
$$k- 4=-2(r+a+ 2)=-2(r+a)- 4,$$
which implies 
$k=-2(r+a).$
Inserting it into previous equations leads to $r=-a\pm 2$, $b=a\mp 4$, $c=4a\pm 8$, $s=\pm(2a\mp 2)$ and $t=\pm(2a\mp 6)$. Notice that $s=\pm(t\mp 4)$. 

If we observe the case when $r-a- 2$ is a constant polynomial, we get $k=2(r-a)$, $r=a\pm 2$, $b=a\pm 4$ and $c=4a\pm 8$. 
Also $s = \pm( 2a \pm 2)$ and $t = \pm (2a \pm 6)$, so $s=\pm(t\pm 4)$.
  \\    After inserting everything calculated into (\ref{gammapol}), we get
    $$\deg(t(\pm x_0-\frac{1}{2}(bz_1+ty_1))\pm 4x_0) = \frac{\gamma}{2}.$$
    This implies $x_0 =\pm \frac{1}{2}(bz_1+ty_1)$ and 
    $\pm 4x_0 = (b-a)z_0$. Since $b-a=\pm 4$, we also have $x_0 = \pm z_0$. 
    From (\ref{jdba_pellova_prva}) we easily get $ z_0 = \pm2$ which implies $\gamma = 0$, a contradiction. 
\end{proof}

We can introduce the following gap principle which will be used in the main theorem. The proof is analogous as in \cite[Lemma 5]{dujjur},  so it is omitted.

\begin{lemma}\label{gapdeg}
Let $\{a, b, c, d\}$ be a polynomial $D(4)$-quadruple and let $\alpha, \beta, \gamma, \delta$ be the degrees of the polynomials $a, b, c, d$, respectively. We assume $\alpha \leq \beta \leq \gamma \leq \delta$. Then either $$\delta \geq \frac{3\beta +5\gamma}{2}>\gamma \ \ \text{or} \ \ d = d_+.$$
\end{lemma}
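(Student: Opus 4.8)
The plan is to locate $d$ inside the binary recurrences and then argue by degrees. Since $\{a,b,c,d\}$ is a $D(4)$-quadruple, $d$ extends the triple $\{a,b,c\}$, so by Lemma \ref{lema_rj_pellove_jedn} together with (\ref{rekurzija_vm})--(\ref{rekurzija_wn}) there are integers $m,n\ge 0$ with $z:=v_m=w_n$ and $cd+4=z^2$. Because $\gamma>0$ and $d\neq 0$ (if $z^2=4$ then $cd=0$, impossible), $z$ is non-constant and $\delta=2\deg(z)-\gamma=2\deg(v_m)-\gamma=2\deg(w_n)-\gamma$; moreover $n-1\le m\le 2n+1$ by Lemma \ref{degineq}.

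First I would dispose of the low indices. If $\{0,1,2\}\cap\{m,n\}\neq\emptyset$, Proposition \ref{proposition_1_jurasic} yields $\deg(d)<\gamma$ or $d=d_+$; since $\delta\ge\gamma$ the first alternative is impossible, so $d=d_+$ and we are done. Hence we may assume $m\ge 3$ and $n\ge 3$. Now a crude degree count based on Lemma \ref{lema_stupnjevi} settles most remaining cases. If $n\ge 4$, then $\deg(z)=\deg(w_n)\ge 3\cdot\frac{\beta+\gamma}{2}+\frac{\gamma}{2}=\frac{3\beta+4\gamma}{2}$, so $\delta\ge 3\beta+3\gamma\ge\frac{3\beta+5\gamma}{2}$. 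If $n=3$ and $m\ge 5$, then $\deg(z)=\deg(v_m)\ge 4\cdot\frac{\alpha+\gamma}{2}+\frac{\gamma}{2}=2\alpha+\frac{5\gamma}{2}$, so $\delta\ge 4\alpha+4\gamma\ge\frac{3\beta+5\gamma}{2}$ since $\beta\le\gamma$. Finally, if $n=3$ and $\beta=\gamma$, then $\deg(z)=\deg(w_3)=(\beta+\gamma)+\deg(w_1)\ge\beta+\tfrac32\gamma$, so $\delta\ge 2\beta+2\gamma=\frac{3\beta+5\gamma}{2}$. The only configurations not covered are $n=3$ with $m\in\{3,4\}$ and $\beta<\gamma$.

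For this remaining case I would fall back on the precise description of the initial data. When $\beta<\gamma$, Lemmas \ref{degdminuslemma} and \ref{degreesd} force $\gamma\ge\alpha+\beta$ and place the triple in one of the cases 2(b), 3(a)--(d) of Lemma \ref{degreesd}; in particular $z_0\neq\pm 2$ and $z_1\neq\pm 2$. For $(m,n)=(3,3)$, part (4) of Lemma \ref{initial2} then forces $\tfrac12(sz_0\pm cx_0)=\tfrac12(tz_1\pm cy_1)$, i.e.\ (after possibly replacing $x_0$ by $-x_0$) $v_1=w_1$; substituting this back into (\ref{rekurzija_vm})--(\ref{rekurzija_wn}) gives $(s^2-t^2)v_1=sz_0-tz_1$, and since $\deg(s^2-t^2)=\gamma+\deg(a-b)$, $\deg(v_1)\ge\gamma/2$ and $\deg(z_0),\deg(z_1)<\gamma$, this is impossible when $\alpha<\beta$ and reduces the case $\alpha=\beta$ to $\deg(a-b)<\alpha/2$, which one then eliminates by inserting the explicit shapes of $z_0,z_1$ (hence of $x_0,y_1$) from Lemma \ref{degreesd} and comparing leading coefficients exactly as in the proof of Proposition \ref{proposition_1_jurasic}. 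The case $(m,n)=(4,3)$ is handled the same way, now starting from part (3) of Lemma \ref{initial2}, which gives $z_0=\tfrac12(tz_1\pm cy_1)=w_1$ and, after substitution, $(s^3-2s)v_1=(s^2+t^2-2)z_0-tz_1$; the analogous degree comparison again leaves only finitely many shapes to check directly. In every surviving sub-case one ends with $z=\pm\tfrac12(st\pm cr)$, so $d=d_{\pm}$, and since $\delta\ge\gamma>\deg(d_-)$ we must have $d=d_+$.

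The hard part is precisely the last paragraph: pure degree bookkeeping is insufficient for $(m,n)\in\{(3,3),(4,3)\}$ with $\beta<\gamma$, so one has to exhaust the finitely many admissible shapes of $(z_0,x_0,z_1,y_1)$ allowed by Lemmas \ref{degreesd} and \ref{initial2} and carry out the elimination by hand, in the spirit of \cite[Lemma 5]{dujjur}.
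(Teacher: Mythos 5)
Your reduction to $m,n\ge 3$ via Proposition \ref{proposition_1_jurasic}, and the degree counts that dispose of $n\ge 4$, of $n=3$ with $m\ge 5$, and of $n=3$ with $\beta=\gamma$, are all correct. The gap is in the last paragraph. The claim that $\beta<\gamma$ ``places the triple in one of the cases 2(b), 3(a)--(d) of Lemma \ref{degreesd}; in particular $z_0\neq\pm 2$ and $z_1\neq\pm 2$'' is unjustified and in fact false: Lemma \ref{degreesd} describes the fundamental solutions of the class from which $d_-$ arises, not of the class containing the solution $z$ attached to the given $d$, and Lemma \ref{initial2} explicitly permits a $\pm 2$ entry in every parity case (compare Lemma \ref{initial3}, parts 4.a)--b), where $(z_0,z_1)$ has a $\pm 2$ entry under the sole constraint $\gamma\le 2\alpha+\beta$, which is perfectly compatible with $\beta<\gamma$). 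Consequently your reduction of $(m,n)=(3,3)$ to $v_1=w_1$ and of $(4,3)$ to $z_0=w_1$ silently discards the sub-cases in which one of $z_0,z_1$ is $\pm 2$ (or $\pm s$, $\pm t$), and the elimination you do set up is, by your own admission, only sketched. As written the proof is therefore incomplete.

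The remaining cases do in fact yield to pure degree bookkeeping, provided you use a sharper input than $\deg(w_1)\ge\gamma/2$. From $w_1=\frac{1}{2}(tz_1+cy_1)$ and $w_1'=\frac{1}{2}(tz_1-cy_1)$ one has $w_1w_1'=bc+z_1^2-c^2$, as in (\ref{initial200}); when $\beta<\gamma$ the bounds (\ref{ineq20}) give $\deg(bc)<2\gamma$ and $\deg(z_1^2)<2\gamma$, so $\deg(w_1)+\deg(w_1')=2\gamma$, while
$\deg(w_1')\le\max\bigl(\tfrac{\beta+\gamma}{2}+\deg(z_1),\,\gamma+\deg(y_1)\bigr)\le\tfrac{\beta+5\gamma}{4}$.
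Hence $\deg(w_1)\ge\frac{3\gamma-\beta}{4}$, and for every $n\ge 3$ Lemma \ref{lema_stupnjevi} gives
$\deg(w_n)\ge\beta+\gamma+\frac{3\gamma-\beta}{4}=\frac{3\beta+7\gamma}{4}$, i.e.\ $\delta=2\deg(w_n)-\gamma\ge\frac{3\beta+5\gamma}{2}$, with no case analysis on $(z_0,z_1)$ at all. Replacing your final paragraph by this estimate closes the argument.
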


We can assume that $\{a, b, c, d'\}$ is an irregular polynomial $D(4)$-quadruple where $\alpha, \beta, \gamma, \delta$ are the degrees of the polynomials $a, b, c, d'$ respectively, $\alpha \leq \beta \leq \gamma$, \ $\beta, \gamma > 0$, and $\deg(d')=\delta$ such that $\delta$ is minimal possible among all irregular polynomial $D(4)$-quadruples over Gaussian integers. We will prove that there is no such quadruple. 

 In Lemma \ref{gap0} we have already shown if $d=d_-$, then $m,n\in\{0,1\}$. Now we consider other cases of interest.

\begin{lemma}\label{gap1}
Let $\{a, b, c\}$ be a $D(4)$-triple in $\mathbb{Z}[i][X]$. Let $v_m=w_n$ and let $d = \dfrac{v_m^2-4}{c}$.
\begin{enumerate}[a)]
    \item If $d = d'$, then $v_m = w_n = \pm z$, for $m, n\geq 3$,
    \item If $0 \in\{m, n\}$, then $d=d_{-}$ or $d=0\neq d_{-}$ or $d = \pm 2i \neq d_{-}$,
    \item If $(m, n) = (1, 1)$, then $d=d_{-}$ or $d=0\neq d_{-}$ or $d = \pm 2i \neq d_{-}$  or $d = d_{+}$ and $\gamma \geq \alpha + 2\beta$.
\end{enumerate}
\end{lemma}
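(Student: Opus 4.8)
\emph{Part (a).} Because $\{a,b,c,d'\}$ is irregular, $d'\notin\{d_+,d_-\}$, and by the minimality of the counterexample $\deg(d')=\delta\ge\gamma$. If $\{0,1,2\}\cap\{m,n\}\ne\emptyset$, Proposition \ref{proposition_1_jurasic} gives $\deg(d)<\gamma$ or $d=d_+$; the first contradicts $\deg(d)=\delta\ge\gamma$, the second contradicts irregularity. Hence $m,n\ge 3$. Finally $v_m=w_n$ forces $v_m^{2}=cd'+4=z^{2}$ by (\ref{jdb_jednakosti_za_d}), so $v_m=\pm z$, and likewise $w_n=\pm z$.

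\emph{Reduction for (b) and (c).} By Lemma \ref{degineq}, $0\in\{m,n\}$ forces $(m,n)\in\{(0,0),(0,1),(1,0)\}$, and with $(m,n)=(1,1)$ these are the only configurations. In the three cases of (b) the common value $v_m=w_n$ equals $z_0$ or $z_1$, of degree $<\gamma$ by (\ref{deg1})--(\ref{ineq20}); so $\deg(d)=2\deg(v_m)-\gamma<\gamma$, whence $d\ne d_+$ there. For each configuration the plan is: apply the appropriate part of Lemma \ref{initial2} to cut $(z_0,z_1)$ down to a short explicit list, intersect with Lemma \ref{degreesd}, discard the entries whose induced degree for $v_m$ or $w_n$ contradicts Lemma \ref{lema_stupnjevi} or (\ref{degv1})--(\ref{degw1}), and for the survivors compute $d=(v_m^{2}-4)/c$ from (\ref{rekurzija_vm})--(\ref{rekurzija_wn}) and (\ref{jdbe_osnovne}).

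\emph{The cases $0\in\{m,n\}$.} For $(0,0)$, Lemma \ref{initial2}(1) gives $z_0=z_1$, hence by Lemma \ref{degreesd} we are in case 1, 2(a), 2(b) or 3(a); since $\deg(z_0)<\gamma$ we get $z_0\in\{\pm 2,\pm s,\pm\tfrac12(st-cr)\}$, so $d=(z_0^{2}-4)/c$ is $0$, $a$, or $d_-$, and $z_0=\pm s$ forces $x_0^{2}=a^{2}+4$, hence $a=\pm 2i$ by Lemma \ref{alpha0}; in every sub-case $d\in\{d_-,0,\pm 2i\}$. The cases $(1,0),(0,1)$ are (almost) vacuous: Lemma \ref{initial2}(2)--(3) leaves $(z_0,z_1)$ among $(\pm t,\pm 2)$, $(\pm s,\pm 2)$ and a relation $z_1=\tfrac12(sz_0\pm cx_0)$ (resp. $z_0=\tfrac12(tz_1\pm cy_1)$) with $x_0$ (resp. $y_1$) non-constant; the $(\pm t,\pm 2)$ entry gives $d=b$ but forces $c\mid 2-z_0$, impossible since $\gcd(c,t)$ is a constant and $\deg(z_0)<\gamma$; the relation entry makes $d$ the larger of the two regular extensions of $\{a,c,d_0\}$ (resp. $\{b,c,d_1\}$), of degree $\ge\gamma$, against $\deg(d)<\gamma$ (the exception $d_0=0$, resp. $d_1=0$, falls into Lemma \ref{degreesd} case 1 with $d=0=d_-$); and the $(\pm s,\pm 2)$ entry, using that $a^{2}+4$ and $b^{2}+4$ (and the quadratics $c^{2}-4ac-16$, $c^{2}-4bc-16$) can be squares in $\mathbb Z[i][X]$ only when $a$, resp. $b$, is the constant $\pm 2i$, together with a degree comparison, forces $\alpha=0$ and $d=a=\pm 2i$. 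This establishes (b).

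\emph{The case $(1,1)$ and the main obstacle.} Now $v_1=w_1$ reads $sz_0+cx_0=tz_1+cy_1$, and Lemma \ref{initial2}(4) gives $(z_0,z_1)=(\pm 2,\tfrac12(\pm cr\pm st))$, its mirror, or a relation $\tfrac12(sz_0\pm cx_0)=\tfrac12(tz_1\pm cy_1)$. Intersected with Lemma \ref{degreesd}, the $(\pm 2,\cdot)$ and $(\cdot,\pm 2)$ alternatives collapse as before to $d\in\{d_-,0,\pm 2i\}$; in particular $(z_0,z_1)=(\pm 2,\pm 2)$ is impossible, since the Pell relations then force $ab+16$ to be a square of a polynomial, so $a,b$ would be constant. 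The relation alternative, after eliminating the Pell unknowns exactly as in the proof of Proposition \ref{proposition_1_jurasic} for $(m,n)=(2,2)$, forces $(z_0,z_1)=(\pm t,\pm s)$ (that is, Lemma \ref{degreesd} case 3(d), in which $\gamma\ge\alpha+2\beta$ already holds) with $x_0=\pm r$, $y_1=\pm r$, so $v_1=\tfrac12(\pm st\pm cr)$ and $d=(v_1^{2}-4)/c\in\{d_+,d_-\}$; and $d=d_+$ needs $\deg(v_1)=\gamma+\tfrac{\alpha+\beta}{2}$, compatible with $\deg(v_1)\le\tfrac{\alpha+5\gamma}{4}$ from (\ref{degv1}) exactly when $\gamma\ge\alpha+2\beta$, the recorded alternative. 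I expect the main obstacle to be this bookkeeping: in each case one must identify the genuinely low-degree branch among $\tfrac12(cr\pm st)$, $\tfrac12(sz_0\pm cx_0)$, $\tfrac12(tz_1\pm cy_1)$ (their leading coefficients being linked by $r^{2}=ab+4$, $s^{2}=ac+4$, $t^{2}=bc+4$, which forces exact cancellations), rule out the unwanted large extensions via Lemma \ref{lema_stupnjevi}, and only then read off $d$; several branches survive the crude degree estimates and are eliminated only by the sharper bounds (\ref{deg1})--(\ref{degw1}), and the borderline situation $\alpha=\beta=\gamma$ with $c=a+b\pm 2r$ requires a separate leading-coefficient computation.
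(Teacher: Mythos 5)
Part (a) is fine and coincides with the paper's argument. The problem is in parts (b) and (c), where you replace the paper's short argument by an explicit classification of initial terms, and that classification rests on an invalid inference. For $(m,n)=(0,0)$ you argue: $z_0=z_1$, ``hence by Lemma \ref{degreesd} we are in case 1, 2(a), 2(b) or 3(a)'', and read off $z_0\in\{\pm 2,\pm s,\pm\tfrac12(cr\pm st)\}$. But Lemma \ref{degreesd} is not an exhaustive list of admissible pairs $(z_0,z_1)$ with $z_0=z_1$; it only describes the initial terms of the \emph{particular} solution class that produces $d=d_-$ (its proof starts from $cd_-+4=w^2$ and Lemma \ref{gap0}). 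For an arbitrary solution with $v_0=w_0$ all you actually know is $\deg(z_0)<\gamma$, hence $\deg(d)<\gamma$; nothing in your argument excludes a further, irregular extension $d\notin\{d_-,0,\pm 2i\}$ of small degree, and excluding it is precisely the content of the lemma. The paper does this by a mechanism you never invoke in (b)/(c): such a $d$ would make $\{a,b,d,c\}$ an irregular quadruple whose largest degree is $\gamma$, whereas Lemma \ref{gapdeg} applied to the minimal irregular quadruple forces its largest degree to be at least $\tfrac{3\beta+5\gamma}{2}>\gamma$, contradicting the minimality of $\delta$. You use minimality only in part (a); in (b) and (c) it is indispensable and absent. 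The same gap reappears in your $(1,0)/(0,1)$ discussion: when $z_1=\tfrac12(sz_0\pm cx_0)$ with the sign chosen so that $\deg(z_1)<\gamma$ (which is the sign Lemma \ref{initial2} actually delivers), the resulting $d$ is the \emph{smaller} regular extension of $\{a,c,d_0\}$ and has degree $<\gamma$, so your dismissal ``of degree $\ge\gamma$, against $\deg(d)<\gamma$'' does not apply to the surviving branch.

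A second, smaller defect: in the $(1,1)$ ``relation'' branch you claim the elimination runs ``exactly as in the proof of Proposition \ref{proposition_1_jurasic} for $(m,n)=(2,2)$'' and forces $(z_0,z_1)=(\pm t,\pm s)$. In the $(2,2)$ case that computation starts from $z_0=z_1$, supplied by Lemma \ref{initial2}; for $v_1=w_1$ you only have $sz_0+cx_0=tz_1+cy_1$ with $z_0\neq z_1$ in general, so the identity $sx_0-ty_1=(b-a)z_0$ is unavailable and the forcing is unsubstantiated. None of this case work is needed: Proposition \ref{proposition_1_jurasic} gives $\deg(d)<\gamma$ or $d=d_+$; in the first alternative minimality plus Lemma \ref{gapdeg} leaves only $d\in\{d_-,0,\pm 2i\}$, and in the second $\pm v_1=w_+$ has degree $\gamma+\tfrac{\alpha+\beta}{2}$, which together with (\ref{degv1}) yields $\gamma\ge\alpha+2\beta$ --- this last step is the one piece of (c) you do have correctly.
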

\begin{proof}
    
    \emph{a)} If $d=d'$, by Proposition \ref{proposition_1_jurasic} we conclude $m, n\geq 3.$ 
    
    \emph{b)} If $0\in\{m, n\}$, then from Proposition \ref{proposition_1_jurasic} we have $\deg(d)<\gamma$. By Lemma \ref{degdminuslemma}, we can have $d = d_-$. Since $\deg(d_+)>\gamma$, we conclude $d\neq d_+$. Using Lemma \ref{gapdeg} 
    and the minimality assumption, the only possible irregular quadruples $\{a, b, c, d\}$ are those with $d=0$ or $d=\pm 2i.$
    
    \emph{c)} Again, by Proposition \ref{proposition_1_jurasic}, we either have $\deg(d)<\gamma$ or $d=d_+$. If $\deg(d)<\gamma$, for $\{a, b, c, d\}$ irregular $D(4)$-quadruple, by the minimality assumption, we obtain $d=0$ or $d=\pm 2i$ or $d=d_-$. If $d=d_+$, (\ref{uvw}) and (\ref{degdplus}) imply $\deg(w_+) = \gamma+\frac{\alpha+\beta}{2}$. Since $w_+=v_1$, inequality $\gamma \geq \alpha +2\beta$ is obtained from (\ref{degv1}).
\end{proof}

By using Lemmas \ref{gap0} and \ref{gap1} we can also emphasize some more details about the initial terms and degrees by listing each case from the proof of Lemma \ref{initial2} separately. The proof follows ideas from \cite[Lemma 4.3]{fj_19} and \cite[Lemma 4.3]{glavni}.

\begin{lemma}\label{initial3}
\mbox{}
\begin{enumerate}
    \item If $v_{2m}=w_{2n}$, then
    \begin{enumerate}[a)]
        \item $z_0 = z_1 = \pm 2$, or
        \item $z_0 = z_1 = \pm s$ and $\alpha =0$, or
        \item $z_0 = z_1 = \pm\frac{1}{2}(cr \pm st)$ and $\alpha >0, \ \alpha +\beta \leq \gamma \leq 2\alpha + \beta$.
    \end{enumerate}
    \item If $v_{2m+1}=w_{2n}$, then 
    \begin{enumerate}[a)]
    \item $(z_0,z_1)\in\{( 2, s),(-2,-s)\}$, $\gamma\geq 2\alpha+\beta$, or
    \item $(z_0,z_1)\in\{( s, 2),(-s,-2)\}$, $\alpha=0$, ($x_0 = 0, \ a=\pm 2i$), or
    \item $(z_0, z_1)=(\pm t, \pm \frac{1}{2}(cr\pm st))$ and $\alpha = \beta$ and $\gamma = 3\alpha$.
    \end{enumerate}
    \item If $v_{2m}=w_{2n+1}$, then
    \begin{enumerate}[a)]
        \item $(z_0, z_1) \in\{ ( t, 2),(-t,-2)\}$ and $\beta < \gamma$, or
        \item $(z_0, z_1) = (\pm s, \pm 2)$, $c=a+b\pm 2r$, $\alpha=0$ and $\beta=\gamma$
        or
        \item $(z_0, z_1) = (\pm \frac{1}{2}(cr \pm st), \pm s)$ and $\alpha \geq 0$ and $2\alpha + \beta \leq \gamma \leq \alpha + 2\beta$ (special case is $(z_0, z_1) = (\pm s, \pm s)$ and $\alpha = 0, \beta = \gamma$).
    \end{enumerate}
    \item  If $v_{2m+1} = w_{2n+1}$, then
    \begin{enumerate}[a)]
    \item $(z_0, z_1) = (\pm 2, \pm \frac{1}{2}(cr\pm st))$ and $\gamma\leq 2\alpha +\beta$,
    if $\beta=\gamma$ we have these special subcases
    \begin{enumerate}[i)]
        \item $(z_0, z_1) = (\pm 2, \pm 2)$, $z_0=z_1$, $\alpha >0$, $d_-=0$,
        \item $(z_0, z_1) = (\pm 2, \pm s)$, $\alpha=0$, or
    \end{enumerate}
    \item $(z_0, z_1) = ( \pm \frac{1}{2}(cr\pm st),\pm 2)$ and $\gamma\leq 2\alpha +\beta$,
    if $\beta=\gamma$ we have these special subcases
    \begin{enumerate}[i)]
        \item $(z_0, z_1) = (\pm 2, \pm 2)$, $z_0=z_1$, $\alpha >0$, $d_-=0$,
        \item $(z_0, z_1) = (\pm s, \pm 2)$, $\alpha=0$,  or
    \end{enumerate}
   \item  $(z_0,z_1)\in\{( t, s),(-t,-s)\}$ and $\gamma \geq \alpha + 2\beta$. 
   
    \end{enumerate}
\end{enumerate}
\end{lemma}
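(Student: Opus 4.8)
The plan is to obtain Lemma~\ref{initial3} by sharpening, case by case, the coarse classification produced in the proof of Lemma~\ref{initial2}. That lemma already splits the problem into the four parity classes $v_{2m}=w_{2n}$, $v_{2m+1}=w_{2n}$, $v_{2m}=w_{2n+1}$, $v_{2m+1}=w_{2n+1}$, and in each of them lists a short set of possibilities for the pair $(z_0,z_1)$: some are explicit (such as $(\pm 2,\pm s)$, $(\pm s,\pm 2)$, or $(\pm 2,\pm\frac12(cr\pm st))$) and some are ``generic'' (of the form $z_1=\tfrac12(sz_0\pm cx_0)$ or $z_0=\tfrac12(tz_1\pm cy_1)$ with the companion variable non-constant). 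The two tasks are therefore: (i) eliminate the generic options or show they collapse onto explicit ones, and (ii) attach to each surviving pair the precise relation between $\alpha$, $\beta$, $\gamma$ claimed in the statement.

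For task (ii) I would simply cross-reference with Lemma~\ref{degreesd}. Every explicit pair $(z_0,z_1)$ that survives is either the trivial pair $(\pm 2,\pm 2)$, or the pair $(\pm s,\pm s)$, or one of the pairs that Lemma~\ref{degreesd} already associates with the element $d_-$; and Lemma~\ref{degreesd} together with Lemma~\ref{degdminuslemma} attaches to each of those pairs exactly the corresponding range for $\gamma$ in terms of $\alpha,\beta$ (for instance $z_0=z_1=\pm\tfrac12(cr\pm st)$ forces $\alpha>0$ and $\alpha+\beta\le\gamma\le 2\alpha+\beta$, the pair $(\pm t,\pm s)$ forces $\gamma\ge\alpha+2\beta$, the pair $(\pm 2,\pm\frac12(cr\pm st))$ forces $\gamma\le 2\alpha+\beta$, and so on). The parity of $(m,n)$ then selects which pairs can occur in each item: by Lemma~\ref{initial1} one has $v_{2m}\equiv z_0$, $v_{2m+1}\equiv v_1$, $w_{2n}\equiv z_1$, $w_{2n+1}\equiv w_1\pmod c$, so an equation between two even-index (resp. two odd-index) terms forces an equality of residues modulo $c$ between $z_0$ and $z_1$ (resp. $v_1$ and $w_1$), and since all the polynomials involved have degree $<\gamma$ by \eqref{deg1}--\eqref{ineq20} the congruence becomes an equality. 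This is exactly the bookkeeping already performed in the proof of Lemma~\ref{initial2}, now run with the sharper list.

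For task (i), the generic options are handled by a degree count of the kind used in the proof of Proposition~\ref{proposition_1_jurasic}. Using the factorizations from the proof of Lemma~\ref{initial2} (the identities of the shape $(cx_0+sz_0)(cx_0-sz_0)=4c^2-4ac-4z_0^2$ and its $t,y_1$ analogue), together with $\deg s=\tfrac{\alpha+\gamma}{2}$, $\deg t=\tfrac{\beta+\gamma}{2}$, $\deg r=\tfrac{\alpha+\beta}{2}$, the bounds \eqref{deg1}, \eqref{ineq20} and the lower bounds $\deg z_0,\deg z_1\ge\gamma/2$ from Lemma~\ref{kongruencije_rj}, one pins down the degree of the ``small'' factor and of $d_0$ (resp.\ $d_1$); this either forces $z_0=\pm t$ (resp.\ $z_1=\pm s$) --- whence the Pellian equations \eqref{jdba_pellova_prva}--\eqref{jdba_pellova_druga} give $x_0=\pm r$ (resp.\ $y_1=\pm r$) and hence $z_1=\pm\tfrac12(cr\pm st)$ (resp.\ $z_0=\pm\tfrac12(cr\pm st)$), so the generic option collapses onto the explicit pair of Lemma~\ref{degreesd} with $\alpha=\beta$, $\gamma=3\alpha$ --- or else the inequalities leave no room and the option is impossible. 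In parallel, Lemmas~\ref{gap0} and~\ref{gap1} together with the minimality of $\delta$ exclude every pair $(z_0,z_1)$ not on the list: such a pair would force the solution $v_m=w_n$ to give $d=d'$ with $m,n\ge 3$ while coinciding with $d_+$ or having $\deg(d)<\gamma$ yet $d\notin\{0,\pm 2i,d_-\}$, which is impossible. The degenerate range $\beta=\gamma$ is treated separately: there Lemma~\ref{degdminuslemma} gives $\deg(d_-)=0$, so by Remark~\ref{Rem_dminus} either $d_-=0$, $c=a+b\pm 2r$ (giving $z_0=z_1=\pm 2$, the subcases 4.(a.i), 4.(b.i)) with $s,t$ the explicit linear combinations of $a,b,r$ from \eqref{jdba_c_regularni1}, or $d_-=a=\pm 2i$, $\alpha=0$, $c=-b\pm 4i$, and then using $\deg t=\deg(b-a)=\gamma$, $\deg r=\gamma/2$ for this triple one compares $\deg v_m$ with $\deg w_n$ via Lemma~\ref{lema_stupnjevi} to rule out all but the listed index combinations.

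The main obstacle is the bookkeeping rather than any single hard step: one must keep track of the eight sign combinations hidden in expressions like $\pm\tfrac12(cr\pm st)$, match each parity class of $(m,n)$ with the correct residue equation, and --- most delicately --- handle the boundary $\beta=\gamma$, where the leading-coefficient cancellations in the degree identities actually occur, so that the generic degree arguments degenerate and one is forced to substitute the explicit forms of $c$, $s$, $t$ from Remark~\ref{Rem_dminus} before the argument goes through. Carrying this out uniformly over the four parity classes, while making sure every pair appearing in the statement is reached and no spurious pair survives, is the bulk of the work.
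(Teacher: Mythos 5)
Your proposal is correct and follows essentially the same route as the paper: start from the classification in Lemma \ref{initial2}, use the factorization/degree arguments together with Lemma \ref{kongruencije_rj} and the bounds \eqref{deg1}--\eqref{ineq20} to resolve the generic options, invoke Lemmas \ref{gap0} and \ref{gap1} (hence Proposition \ref{proposition_1_jurasic} and the minimality of $\delta$) to reduce to $d\in\{d_-,0,\pm 2i\}$, read off the degree ranges from Lemma \ref{degreesd} and Lemma \ref{degdminuslemma}, and treat the boundary $\beta=\gamma$ separately via Remark \ref{Rem_dminus}. The paper itself only writes out cases 2 and 4 in detail, and your outline matches the ingredients it uses there.
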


\begin{proof}
We present only a short proof of cases \emph{2.}\@ and \emph{4.} Other cases are proven similarly.

\emph{2.} We deal with the equation $v_{2m+1}=w_{2n}$. 
        \\ \emph{a)} 
    In the case $(z_0, z_1) = (\pm 2, \pm s)$, from (\ref{ineq20}) we have 
    $$\frac{\alpha+\gamma}{2}=\deg (z_1)\leq \frac{3\gamma-\beta}{4},$$
    implying $\gamma\geq 2\alpha + \beta.$
    \\ \emph{b)}  If $(z_0, z_1) = (\pm s, \pm 2)$, $x_0$ is a constant and $\alpha=0$. Also, from $x_0^2 = a^2 +4$, we conclude $x_0 = 0, \ a=\pm 2i$. 
    \\ \emph{c)}  The last possibility in this case is $z_1 = \frac{1}{2}(sz_0\pm cx_0)$ and $x_0$ is not a constant. Using (\ref{rekurzija_vm}) and (\ref{rekurzija_wn}) we get $\pm v_1 = w_0.$ 
    
    According to Lemma \ref{gap1}, we deal with one of the following possibilities $d=d_{-}$ or $d=0\neq d_{-}$ or $d = \pm 2i \neq d_{-}$. The case when $d = d_{-}$ is described in 3.c) of Lemma \ref{degreesd}. For $d=0\neq d_{-}$ we have $\frac{1}{2}(sz_0 \pm cx_0) = \pm 2$, which means that one of the polynomials $sz_0\pm cx_0$ is a constant. 
    If $\alpha = \gamma$, by Lemma \ref{gap1} we would get $\gamma = 0$, which cannot hold. So, we have $\alpha < \gamma$. Since 
    $$(sz_0 + cx_0)(sz_0-cx_0) = 4z_0^2+4ac-4c^2,$$ we conclude that $\deg((sz_0+cx_0)(sz_0-cx_0))=2\gamma$. One of these polynomials is a constant, and the other has a degree $2\gamma$ which is a contradiction because of (\ref{ineq20}). In the last case, when $d = \pm 2i \neq d_{-}$, we have $\frac{1}{2}(sz_0 \pm cx_0) = \pm s$. From $ac+4=s^2$, we see that $g=\gcd(s,c)$ is a constant polynomial, so $\frac{s}{g} | x_0$. Since $\deg\frac{s}{g}=\deg (s)=\frac{\alpha+\gamma}{2} $, this is not possible for $x_0\neq 0$, because it would be a contradiction with  (\ref{deg1}).
\medskip 

    \emph{4.} By Lemma \ref{initial2} we can have $(z_0, z_1) = (\pm2, \pm\frac{1}{2}(cr \pm st))$ or $(z_0, z_1) = (\pm\frac{1}{2}(cr \pm st), \pm2)$ or $sz_0 \pm cx_0 = tz_1 \pm cy_1$. 
\\ \emph{a)} In the case $(z_0, z_1) = (\pm2, \pm\frac{1}{2}(cr \pm st))$, as in Lemma \ref{degreesd} 3.a)
we have $\gamma\leq 2\alpha+\beta$. From the proof of Lemma \ref{initial2} we see special subcases.

\emph{b)} If $(z_0, z_1) = (\pm\frac{1}{2}(cr \pm st), \pm2)$ we make similar conclusions as in case a). 

\emph{c)} The proof of this case is the same as \cite[Lemma 4.3 (4)]{glavni}.
    \end{proof}

\section{Proof of Theorem \ref{tm_glavni}}\label{theproof}

Analogously as in \cite[Lemma 2.14.]{moj_cetvorke} and \cite[Lemma 4.4]{glavni} we can prove the next lemma.
\begin{lemma}\label{lemma 4.4}
Let $(v_{z_0,m})_{m\geq0}$ denote a sequence $(v_m)_{m\geq0}$ with an initial value $z_0$ and $(w_{z_1,n})_{n\geq0}$ denote a sequence $(w_n)_{n\geq0}$ with an initial value $z_1$. It holds that
$$v_{t,m}=v_{-\frac{1}{2}(cr-st),m+1}=-v_{\frac{1}{2}(cr-st),m+1}$$
and
$$v_{-t,m+1}=v_{\frac{1}{2}(cr-st),m}=-v_{\frac{1}{2}(-cr+st),m}$$  for each $m\geq 0$. Also, for $m\geq 0$
$$v_{t,m+1}=v_{\frac{1}{2}(cr+st),m} \quad\textit{ and }\quad v_{-t,m}=v_{\frac{1}{2}(-cr-st),m+1}.$$ 
Similarly, for each $n \geq 0$ it holds
$$w_{s,n}=w_{-\frac{1}{2}(cr-st),n+1}=-w_{\frac{1}{2}(cr-st),n+1}$$
and
$$w_{-s,n+1}=w_{\frac{1}{2}(cr-st),n}=-w_{\frac{1}{2}(-cr+st),n}.$$ 
Also, for $n\geq 0$
$$w_{s,n+1}=w_{\frac{1}{2}(cr+st),n} \quad\textit{ and }\quad w_{-s,n}=w_{\frac{1}{2}(-cr-st),n+1}.$$ 
\end{lemma}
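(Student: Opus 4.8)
The plan is to reduce the whole lemma to one algebraic observation together with the uniqueness of solutions of a second-order linear recurrence. The recurrence (\ref{rekurzija_vm}) has characteristic polynomial $X^{2}-sX+1$ with roots $\xi=\tfrac12\bigl(s+\sqrt{ac}\bigr)$ and $\bar{\xi}=\tfrac12\bigl(s-\sqrt{ac}\bigr)$ (note $s^{2}-4=ac$ by (\ref{jdbe_osnovne})), so $\xi\bar{\xi}=1$ and $\xi$ is an automorphism of the quadratic form $aZ^{2}-cX^{2}$ attached to (\ref{jdba_pellova_prva}). Advancing the index of $(v_{m})$ by one step corresponds to multiplying the Pell vector $v_{m}\sqrt{a}+x_{m}\sqrt{c}$ by $\xi$, and retreating by one step corresponds to multiplying by $\bar{\xi}$; the analogous role for $(w_{n})$ is played by $\eta=\tfrac12\bigl(t+\sqrt{bc}\bigr)$.

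First I would record the closed form. A short induction on (\ref{rekurzija_vm}) shows that if $(v_{m})$ starts from an admissible pair $(z_{0},x_{0})$, i.e.\ one with $az_{0}^{2}-cx_{0}^{2}=4(a-c)$, then $v_{m}\sqrt{a}+x_{m}\sqrt{c}=(z_{0}\sqrt{a}+x_{0}\sqrt{c})\,\xi^{m}$ for all $m\ge0$; and since $\xi^{2}=s\xi-1$, its first coordinate indeed satisfies $v_{m+2}=sv_{m+1}-v_{m}$. Consequently every sequence named in the lemma is the unique solution of this recurrence having prescribed values at two consecutive indices, and likewise for the $w$-sequences. Hence it suffices to verify each claimed equality at two consecutive values of the index --- say the two lowest ones --- after which uniqueness (equivalently, a one-line induction) propagates it to all $m\ge0$.

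Next I would carry out the finitely many base computations. For a vector attached to $z_{0}=\pm t$, relation (\ref{jdbe_osnovne}) forces the companion to satisfy $x_{0}^{2}=r^{2}$, so $x_{0}=\pm r$; multiplying out then yields, for instance,
\[
(t\sqrt{a}+r\sqrt{c})\,\xi=\tfrac12\bigl((st+cr)\sqrt{a}+(at+sr)\sqrt{c}\bigr),\qquad
(t\sqrt{a}+r\sqrt{c})\,\bar{\xi}=\tfrac12\bigl((st-cr)\sqrt{a}+(sr-at)\sqrt{c}\bigr),
\]
and the remaining sign choices are treated the same way. The companions of the vectors $\tfrac12(\pm cr\pm st)\sqrt{a}+(\cdots)\sqrt{c}$ occurring on the right-hand sides are precisely $u_{\pm}=\tfrac12(sr\pm at)$ from (\ref{uvw}), so in every instance the product is again one of the Pell vectors appearing in the lemma. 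Reading off first coordinates turns these products into the displayed $v$-identities; the forms carrying a leading minus sign follow from the linearity of (\ref{rekurzija_vm}), by which negating the initial pair negates the whole sequence. The $w$-identities are obtained verbatim after the substitution $a\leftrightarrow b$, $s\leftrightarrow t$, $x_{0}\leftrightarrow y_{1}$, $\xi\leftrightarrow\eta$, noting that $r$ is unchanged because $r^{2}=ab+4$.

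I expect the only genuine obstacle to be bookkeeping rather than mathematics. The symbol $v_{z_{0},m}$ designates a definite sequence only after the companion $x_{0}$ in (\ref{rekurzija_vm}) has been fixed, so the delicate point is to confirm that the companion produced by multiplying by $\xi^{\pm1}$ coincides with the one implicitly attached to the target initial value on the other side of each identity (for example, the sequences $v_{\pm t,m}$ must be taken with companion $+r$ in order for the stated equalities to hold as written). The explicit companions $x_{0}\in\{\pm r,\ u_{\pm}\}$ computed above are exactly what guarantees this match, and once it has been checked the lemma collapses to the routine verifications indicated.
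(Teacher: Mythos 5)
Your proposal is correct and follows essentially the argument the paper delegates to the cited analogues (\cite[Lemma 2.14]{moj_cetvorke}, \cite[Lemma 4.4]{glavni}): one computes the companion $x_0=\pm r$ (resp.\ $y_1=\pm r$) forced by the Pellian equation, checks that one application of the fundamental automorphism $\tfrac12(s+\sqrt{ac})$ (resp.\ $\tfrac12(t+\sqrt{bc})$) carries the initial Pell vector for $\pm t$ (resp.\ $\pm s$) to the one for $\tfrac12(\pm cr\pm st)$ with companion $u_\pm$ (resp.\ $v_\pm$), and concludes by the uniqueness of a solution of the second-order recurrence with prescribed consecutive values. Your closing remark about matching companions is exactly the bookkeeping point on which the lemma's sign conventions rest, so the verification is complete.
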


This lemma implies that while proving Theorem \ref{tm_glavni} and considering cases from Lemma \ref{initial3}, we can simplify certain cases by reducing them to ones that have already been proven. However, there are situations where the degrees do not match, requiring us to consider these subcases separately. More precisely,
 \begin{itemize}
     \item cases \textit{2.c)} and \textit{4.c)} are reduced to case \textit{1.c)},
     \item case \textit{3.c)} is reduced to  case \textit{1.c)} but we must observe separately case $\alpha=0$ and case $2\alpha+\beta<\gamma\leq\alpha+2\beta$, 
     \item case \textit{4.a)} is reduced to  case \textit{2.a)} but we must also observe case $\gamma<2\alpha+\beta$ and, separately, case $\beta=\gamma$,
     \item case \textit{4.b)} is reduced to  case \textit{3.a)} but we must consider separately case $\beta=\gamma$.
 \end{itemize} 

We want to find all extensions of a $D(4)$-triple $\{a, b, c\}$ to a $D(4)$-quadruple $\{a, b, c, d\}$ in $\mathbb{Z}[i ][X]$. We reduce the problem of finding those extensions to
the problem of the existence of a suitable solution of equation $v_m=w_n$, where $(v_m)_{m\geq0}$ and $(w_n)_{n\geq 0}$ 
are binary recurrence sequences defined by (\ref{rekurzija_vm}) and (\ref{rekurzija_wn}), for some initial values $(z_0, x_0)$
and $(z_1, y_1)$. In Lemma \ref{initial3} we described all possible initial terms and relations between degrees of polynomials that hold in each case. We will prove that neither
of them leads to an irregular $D(4)$-quadruple with $d$ such that $\deg (d)>\gamma$. More precisely, we will show that in all cases we will have $d=d_+$ or $\deg (d)<\gamma$.

\begin{remark}\label{rem_smanjena}
    In general, for a fixed $z_0$ we have to consider two possibilities $\pm x_0$. Since we also consider the case $-z_0$, we can compare the sequence (\ref{rekurzija_vm}) with the values $(z_0,-x_0)$ and $(-z_0,x_0)$ \begin{align*} v_0&=z_0,\quad v_1=\frac{1}{2}(sz_0+c(-x_0)),\\ v'_0&=-z_0,\quad v'_1= \frac{1}{2}(s(-z_0)+cx_0)=-v_1. \end{align*} Note that if $z=v_m$ is a solution of the equation (\ref{jdba_pellova_prva}), then $-z=v'_m=-v_m$ is also a solution, but the same $d$ is obtained from (\ref{jdb_jednakosti_za_d}). Thus, it is sufficient to observe only one possibility for $x_0$, but both possibilities $\pm z_0$. The same observation holds for $z_1$ and $y_1$.
\end{remark}

\begin{proof}[Proof of Theorem \ref{tm_glavni}]
The proof is divided into parts according to the cases from Lemma \ref{initial3}.

\emph{Case 1.a)} $v_{2m}=w_{2n},\ z_0=z_1=\pm2$.\\
From (\ref{d0}), (\ref{d1}) and Remark \ref{rem_smanjena} we have $x_0= 2$ and $y_1= 2$. Inserting those values in the congruences from Lemma \ref{lemma 5.1} yields
\begin{equation}\label{kongruencija1.a}
\pm am^2+ sm\equiv \pm bn^2+ tn \ (\bmod \ c).
\end{equation}
We assume that $m,n\neq 0$, since from Proposition \ref{proposition_1_jurasic} we have that $m=0$ or $n=0$ implies $\deg(d)<\gamma$. \par
First, let $\beta<\gamma$. Congruence (\ref{kongruencija1.a}) must be an equality $\pm am^2+ sm=\pm bn^2+ tn$ since degrees of the polynomials on both sides are less than $\deg (c)=\gamma$. By observing degrees of polynomials on both sides, we get $\alpha=\beta$. Using that observation and  $\deg(v_1)=\deg(w_1)$, Lemma \ref{lema_stupnjevi} yields $m=n$. Now, we have an equality 
$$\pm m(a-b)= t+ s.$$
By multiplying it with $s-t$ and using $s^2-t^2=c(a-b)$ we get $c=\pm m(s-t)$ which cannot hold since $\deg (s- t)\leq\frac{\beta+\gamma}{2}<\gamma$.

Now, let $\beta=\gamma$. As stated in Remark \ref{Rem_dminus}, we have $d_-=0$ or $d_-=a=\pm 2i$.  
If $d_-=a=\pm 2i$ then $c=-b\pm 4i$, $v_1=\pm s+ c$, $w_1=\pm t + c$, so $\deg(b)=\deg(c)=\deg(v_1)=\deg(w_1)=\gamma$. Lemma \ref{lema_stupnjevi} now implies 
$m+\frac{1}{2}=2n,$
which cannot hold since $m$ and $n$ are integers. 

If $d_-=0$ then $c=a+b\pm 2r$, $s=\pm(r\pm a)$ and $t=\pm (b\pm r)$. 
  From (\ref{kongruencija1.a}) we see that there exists $k\in\mathbb{Z}[i]$ such that
\begin{equation}\label{kongruencija1.a_2}
\pm am^2\pm(r\pm a)m-(\pm bn^2\pm(b\pm r)n)=k(a+b\pm2r).\end{equation} 

First, let us observe the case $\alpha<\beta$, i.e.\@ $\alpha<\deg (r)<\beta$.
By comparing coefficients in (\ref{kongruencija1.a_2}) next to $b$, $r$, and $a$, respectively, we get a system of equations 
\begin{align}
     \pm n^2\pm n&=k,\label{k_prva}\\
    \pm m\pm n&=2k, \label{k_druga}\\
    \pm m^2\pm m&=k\label{k_treca}.
\end{align}
It is easy to conclude that $k\in\mathbb{Z}$ and from (\ref{k_druga}) that $m$ and $n$ must have the same parity. After observing all possibilities for signs, the only solution is $(m,n)=(2,2)$. After inserting it in expressions from Lemma \ref{lema_stupnjevi} and using that $\deg (v_1)=\deg (w_1)=\gamma$, we get $\alpha=\beta$, which is a contradiction.

It remains to consider the case $\alpha=\beta=\gamma$. Let denote by $A$, $B$ and $C=A+B\pm 2\sqrt{ AB }$ leading coefficients of $a$, $b$ and $c=a+b\pm 2r$. Also, let $S$ and $T$ denote leading coefficients of $s$ and $t$. If $v_1=\pm s+ c$ and $w_1=\pm t+ c$ are both polynomials of degree less than $\gamma$, then $S=\pm C$ and $T=\pm C$. On the other hand, $s^2=ac+4$ implies $C^2=S^2=AC$ and, similarly, $C^2=T^2=BC$. It follows that $A=B=C$ and $C=2C\pm 2 R$, where $R$ is a leading coefficient of the polynomial $r$ and $R^2= AB =C^2$ holds. It is now obvious that this is only true for $A=B=C=0$, which cannot hold.
\\So at least one of the polynomials $v_1$ and $w_1$ has a degree equal to $\gamma$. If $\deg(v_1)=\gamma$,  Lemma \ref{lema_stupnjevi} implies
$\frac{3}{2}\gamma\geq \deg(w_1)=\gamma(2m-2n+1)\geq \gamma$. The only possibility is $\deg(w_1)=\gamma$ and $2m-2n+1=1$, i.e. $m=n$. 
By inserting $m=n$ in the equation (\ref{kongruencija1.a}) we get
$$\pm m^2(a-b)\equiv \pm m(s\pm t)\ (\bmod\ c).$$
It is not hard to see from Remark \ref{Rem_dminus} that $s\pm t\in\{\pm c,\pm (a-b)\}$.
If $s\pm t=\pm c$, then $\pm m^2(a-b)\equiv 0\ (\bmod \ c)$ and if $s\pm t=\pm (a-b)$ then $\pm(m^2\pm m)(a-b)\equiv 0\ (\bmod \ c)$. Notice that $c=b-a\pm 2s$ so $a-b\equiv \pm 2s \ (\bmod \ c)$. This implies 
$$\pm M s\equiv 0 \ (\bmod \ c),$$
where $M\in \{2m^2,2m^2\pm 2m\}$ is an integer. 
From $ac+4=s^2$ we see that if a polynomial $p\in\mathbb{Z}[i][X]$ divides $c$ and $s$ then it also divides $4$, which implies that $p$ is a constant. Then, $\frac{c}{p}$, a polynomial of degree $\gamma>0$, divides a constant polynomial $M$, which holds only if $M=0$. The only possibility is $(m,n)=(1,1)$, i.e. $(2m,2n)=(2,2)$, which, by Proposition \ref{proposition_1_jurasic}, implies $d=d_+$.

\medskip
\emph{Case 1.b)} $v_{2m}=w_{2n}$, $z_0=z_1=\pm s$ and $\alpha=0$.\\
In this case $d_0=a=\pm2i$. Using (\ref{d0}), (\ref{d1}) and Remark \ref{rem_smanjena} we get $d_0=d_1=a=\pm 2i$, $x_0=0$ and $y_1=r.$ 
From Lemma \ref{lemma 5.1} we have
\begin{equation*}
    \pm asm^2\equiv \pm bsn^2+ trn\ (\bmod \ 2c).
\end{equation*}
Again, we assume $m,n\neq 0$ and multiply this congruence with $s$. After using Lemma \ref{lemma 5.2} and $s^2\equiv 4 \ (\bmod \ c)$ we get
\begin{equation}\label{kongruencija1b}
    \pm 4am^2=\pm 4bn^2+2n(a+b-d_-)\ (\bmod \ c).
\end{equation}

If $\beta<\gamma$, (\ref{kongruencija1b}) is an equality.
If $\deg(d_-)<\beta$, by comparing leading coefficients in that equation, we get $0=\pm4n^2+ 2n$, which is not possible for an integer $n\geq 1$. Hence, $\deg(d_-)=\beta$ and by Lemma \ref{degdminuslemma} we have $\deg(d_-)=\gamma+\beta$, so $\gamma=2\beta$. From Lemma \ref{lemma 3.7} we have that $d_-=-b\pm 4i=-b+2a$ or $d_-=a+b\pm 2r$. In both cases, by comparing coefficients in (\ref{kongruencija1b}), we get a contradiction with $m,n\neq 0$.

If $\beta=\gamma$, from Lemma \ref{degreesd} we have $d_-=0$ or $d_-=a=\pm 2i$. If $d_-=0$ then $c=a+b\pm 2r$. From (\ref{kongruencija1b}) there exists $k\in\mathbb{Z}[i]$, such that 
$$\pm4am^2-(\pm 4bn^2+ 2n(a+b))=k(a+b\pm 2r),$$
where we observe all possibilities for signs. Comparing leading coefficients on both sides of that equation gives
$\pm 4n^2+2n=\pm k$ and then $2k=0$ i.e. $k=0$ and $2n(\pm2n+ 1)=0$, which cannot hold for $n\geq 1$.

If $d_-=a=\pm 2i$, from Remark \ref{Rem_dminus} we have  $c=-b+2a$. 
Inserting in (\ref{kongruencija1b}) yields
$$\pm 4am^2\mp 4bn^2- 2nb=k(-b+2a),\quad k\in\mathbb{Z}[i].$$
 By comparing the coefficients we get
$\mp4n^2-2n=-k$ and $\pm 2m^2=k$, i.e. $ n(\mp2n-1)= \pm m^2$. On the other hand, since $v_1=\pm\frac{1}{2}s^2$ and $w_1=\frac{1}{2}(\pm ts\pm cr)=\frac{1}{2}(\pm (br-ar)\pm (-br+2ar))$, we have $\deg(v_1)=\gamma$ and $\deg(w_1)\in\left\{\frac{3\gamma}{2},\frac{\gamma}{2}\right\}.$ Together with Lemma \ref{lema_stupnjevi} this implies $m=2n$ or $m=2n-1$. The system of these two equations has only solution $(m,n)=(1,1)$, i.e.\@ $(2m,2n)=(2,2)$ which by Proposition \ref{proposition_1_jurasic} implies $d=d_+$.

\medskip
\emph{Case 1.c)} $v_{2m}=w_{2n},\ z_0=z_1=\pm\frac{1}{2}(cr\pm st)$ and $\alpha>0,\ \alpha+\beta\leq \gamma \leq 2\alpha+\beta$. Note that the value of $z_0$ is $\pm\frac{1}{2}(cr+st)$ or $\pm\frac{1}{2}(cr-st)$, depending on which of the two polynomials has a lower degree. 

Since $\alpha>0$, we have $\beta<\gamma$. Also, $x_0=\frac{1}{2}(at\pm rs)$ and $y_1=\frac{1}{2}(bs\pm rt)$ can be shown as, for example, in \cite{ff}.\\
First, let us observe the case $\alpha=\beta<\gamma$. Similarly as in \cite[Lemma 8]{dif2}, we have
$$\deg(v_1),\deg(w_1)\in \left\{\frac{3\gamma-\alpha}{2},\frac{\gamma+\alpha}{2}\right\}.$$
After considering all four possibilities in expressions from Lemma \ref{lema_stupnjevi}
and by comparing the degrees of the polynomials $v_{2m}$ and $w_{2n}$, we conclude that $m=n$. 
We use congruences from Lemma \ref{lemma 5.1}, and after multiplying by $st$ and using $(st)^2\equiv16\ (\bmod \ c)$, we obtain
\begin{equation}\label{eq_kong}
\pm 16(am(m\pm 1)-bn(n\pm 1))\equiv 4rst(n- m)\ (\bmod \ c).
\end{equation}
Since $m=n$, equality $16m(m\pm 1)(a-b)=0$ holds. This implies $m=n=1$. Hence, from Proposition \ref{proposition_1_jurasic} we have $d=d_+$.

Now, we observe the case $\alpha<\beta<\gamma\leq 2\alpha+\beta.$
In (\ref{eq_kong}), after applying Lemma \ref{lemma 5.2}, we see that the equality
$$\pm 16(am(m\pm 1)-bn(n\pm 1))= 8(a+b-d_-)(n- m)$$
holds.
Notice that $\deg(d_-)\leq \alpha<\beta$. By comparing leading coefficients we have $\pm 2(-n(n\pm1))=n- m$, and then
\begin{equation}\label{jdba_12}\pm 2am(m\pm 1)= (a-d_-)(n- m).\end{equation}
If $\deg (d_-)<\alpha$, then also $\pm 2m(m\pm 1)=n- m$ so $n(n\pm1)=\pm m(m\pm 1)$. Since $m$ and $n$ are positive integers, the only possibility is $m=n=1$ which implies $d=d_+$. On the other hand, if $\deg (d_-)=\alpha$, from the previous equality we have two possibilities to observe. The first possibility is $d=ka$, for some $k\in \mathbb{Z}[i]$, but then $a^2k+4=x^2$, i.e. $(a\sqrt{k}-x)(a\sqrt{k}+x)=-4$, which cannot hold for a non-constant $a$. The second possibility is that the right-hand side of (\ref{jdba_12}) vanishes, i.e.\@ $n=m$, and then $m(m\pm 1)=0$ implies $m=n=1$ and $d=d_+$. This finishes the proof of case \emph{1.c)}.\par 
\medskip
As in \cite{fj_19}, we can reduce other cases of this proof to this one by using Lemma \ref{lemma 4.4}, but it remains to prove this case for the other possibilities besides $\deg(d_-)\leq \alpha$.
The remaining cases are:
\begin{enumerate}[1.]
    \item $\alpha<\deg(d_-)<\beta$, $(2\alpha+\beta<\gamma<\alpha+2\beta)$,
    \item $\deg(d_-)=\beta$, $(\gamma=\alpha+2\beta)$,
    \item $\deg(d_-)>\beta$, $(\gamma>\alpha+2\beta)$.
\end{enumerate}
We omit the details of the proof since it is analogous as in \cite{fj_19}.

\medskip
\emph{Case 2.a)} $v_{2m+1}=w_{2n},\ (z_0,z_1)\in\{(2, s),(-2,-s)\}$ and $\gamma\geq 2\alpha+\beta$.\\
From (\ref{jdba_pellova_prva}) and (\ref{jdba_pellova_druga}) we have $x_0= 2$ and $y_1= r$. 
 
From Lemma \ref{lemma 5.1} we have
\begin{equation}\label{kong_2a_prva}
\pm asm(m+1)+ 2(2m+1)\equiv \pm bsn^2+ rtn\ (\bmod \ c).
\end{equation}
After multiplying by $s$ and using Lemma \ref{lemma 5.2}, we get 
\begin{equation}\label{kong_dokaz_2a}
     \pm4 am(m+1)+ 2s(2m+1)\equiv \pm4bn^2+ 2n(a+b-d_-)\ (\bmod \ c).
\end{equation}
\par Let us first observe the case $\beta< \gamma$. From Lemma \ref{degdminuslemma} we know $\alpha \leq \deg (d_-)<\gamma$, so we conclude that congruence (\ref{kong_dokaz_2a}) is an equality
\begin{equation}\label{jdba_dokaz_2a}
     \pm 4 am(m+1)+ 2s(2m+1)= \pm 4bn^2+2 n(a+b-d_-).
\end{equation}
Also, it is easy to see that $\deg (v_1)=\gamma$ and $\deg (w_1)\in\left\{\gamma+\frac{\alpha+\beta}{2},\gamma-\frac{\alpha+\beta}{2}\right\}$. We insert these values into expressions from Lemma \ref{lema_stupnjevi}, and $v_{2m+1}=w_{2n}$ implies
 \begin{equation}\label{eq: 2a_stup}
     2m(\alpha+\gamma)=(2n-1)(\beta+\gamma)\pm(\alpha+\beta).
 \end{equation}
We assume $m,n\geq 0$. First, the case $\beta<\frac{\alpha+\gamma}{2}=\deg (s)$ is observed. By (\ref{jdba_dokaz_2a}) we see that $\deg (d_-)=\frac{\alpha+\gamma}{2}$ and from Lemma \ref{degdminuslemma} we have $\gamma=3\alpha+2\beta$. Together with $\beta<\frac{\alpha+\gamma}{2}$ this implies $\alpha\neq 0$. Then $\deg(as)=3\alpha+\beta$, $\deg(bs)=2\alpha+2\beta$ and $\deg(rt)=2\alpha+2\beta$ are all less than $\gamma$ implying that (\ref{kong_2a_prva}) is an equality. Now observe the case $\alpha\neq \beta$. If we denote leading coefficients of polynomials $a,b,c,r,s,t$ with $A,B,C,R,S,T$, respectively, then we see that 
$$\pm BSn^2=RTn.$$
After squaring and using $R^2=AB$, $S^2=AC$ and $T^2=BC$, we get $n=1$. Lemma \ref{degineq} implies $1\leq m\leq 2$. Inserting each option in (\ref{eq: 2a_stup}) yields a contradiction with $0<\alpha\leq \beta$.  If $\alpha=\beta$, then (\ref{eq: 2a_stup}) implies $m=n-1/3$ or $m=n-2/3$, which cannot hold.
\par 
If $\beta>\frac{\alpha+\gamma}{2}$, then $2\beta-\alpha>\gamma$.  Also, by observing (\ref{jdba_dokaz_2a}), we see that  $\deg (d_-)=\beta$ must hold, so  $\gamma=2\beta+\alpha$, which implies $\alpha<0$, a contradiction. \par 
If  $\beta=\frac{\alpha+\gamma}{2}$, we have $\gamma=2\beta-\alpha$ and, since $\gamma\geq 2\alpha+\beta$, we have $\beta\geq 3\alpha$ and $\gamma\geq 5\alpha.$ Let $\alpha=0$. Then $\gamma=2\beta=2\beta+\alpha$, so we have one of the options from Lemma \ref{lemma 3.7}. If $c=r(r\pm a)(b\pm r),$ then $d_-=a+b\pm 2r$ and $s=\pm(a(b\pm r)+2)$.
We insert that in (\ref{jdba_dokaz_2a}) and get
$$
     \pm 4 am(m+1)\pm 2(ab\pm ar+2)(2m+1)= \pm 4bn^2\pm 4nr.
$$
We compare coefficients next to polynomials $b$ and $r$ and get a system
\begin{align*}
    \pm 2a(2m+1)&=4n^2,\\
    \pm 2a(2m+1)&=4n,
\end{align*}
which doesn't have solutions in positive integers $m,n,a$.
If $c=\mp 2ib^2- 8b \pm 10i,$ then $a=\pm 2i$, $d_-=-b+ 2a$ and $s=\pm(\mp2b+4i)$. Again, we insert it in (\ref{jdba_dokaz_2a}) and get 
$$
\pm 4am(m+1)+\pm2(\mp2b+4i) (2m+1)= \pm 4bn^2+2 n(2b-a).
$$
We compare coefficients next to polynomial $b$ and get $\pm(2m+1)=\pm n^2+n$, which cannot hold since $\pm n^2+n$ is an even and $2m+1$ is an odd integer.\\ 
Let $\alpha>0$. Then $\alpha\leq\deg (d_-)=\beta-2\alpha<\beta$. We use (\ref{jdba_dokaz_2a}) to define a polynomial
$$g:=nd_-+(\pm 2m(m+1)-n)a=(\pm 2n^2+n)b-(2m+1)s$$
and see that $\deg (g)\leq \deg (d_-)$.
 First, assume that $\deg (g)=\deg (d_-)=\beta-2\alpha$.
If we rewrite congruence (\ref{kong_2a_prva}) such that $s$ is collected on the left-hand side and square it, we get
\begin{equation}\label{kong_2a_kvadrirano}
    4(am(m+1)-bn^2)^2\equiv 4nr^2-4n(2m+1)rt+4(2m+1)^2 \ (\bmod \ c). 
\end{equation}
From (\ref{uvw}) and the definition of polynomial $g$, we have 
\begin{equation}\label{jednakost_2a}
    (\pm2n^2+n)rt=\pm gs\pm (2m+1)s^2\pm2(\pm 2n^2+n)v_-
\end{equation}
and $\deg (v_-)=\beta - \alpha.$ Denote $k_b=\pm 2n^2+n>0$ and $k_s=2m+1>0$. Now we multiply (\ref{kong_2a_kvadrirano}) with $k_b^2$, insert (\ref{jednakost_2a}) and use $s^2\equiv 4\ (\bmod \ c )$ and $k_bb=g+k_ss$  to get a congruence
\begin{align}\label{kong_vminus_2a}
    4k_b^2m^2&(m+1)^2a^2-8k_b^2n^2m(m+1)ab+4n^4g^2+8k_sn^4gs+16k_s^2n^4\\
    \nonumber
    &\equiv 4k_b^2nab+16k_b^2n+4k_s^2k_b^2-4k_bk_sn(\pm gs\pm 4k_s\pm2k_bv_-)\ (\bmod \ c).
\end{align}
Both sides of (\ref{kong_vminus_2a}) have a degree less than $\gamma=2\beta-\alpha$, which implies that (\ref{kong_vminus_2a}) is equality.
If $\beta>3\alpha$, then $\deg (gs)=2\beta-2\alpha>\deg (ab)$, hence, polynomial $gs$ has the highest degree in the said equality. We conclude 
$2n^3=\pm k_b=\pm (\pm2n^2+n)$, which doesn't have a solution in positive integers. Then it must hold $\beta=3\alpha$, so $\deg (gs)=\deg (ab)$.
If we compare degrees in the equality arising from (\ref{kong_vminus_2a}), we see that 
$$8k_b^2n^2m(m+1)ab+8k_sn^4gs-4k_b^2nab\pm 4k_bk_sn gs$$
must be a polynomial of degree less than or equal to $2\alpha$. 
Furthermore, since $k_ss=k_bb-g,$ we get that polynomial 
$$b(8k_b^2n^2m(m+1)a+8k_bn^4g-4k_b^2na \pm4k_b^2ng)$$
also has a degree less than $\beta$, which is only possible if the polynomial in parentheses is equal to zero. This implies that $a$ divides $g$ in $\mathbb{C}[X]$, and from the definition of $g$, we get that $a$ divides $d_-$. Hence, $d_-=\lambda a$, $\lambda\in\mathbb{C}$, which together with (\ref{uvw}) implies 
$4=(u_--\sqrt{\lambda}a)(u_-+\sqrt{\lambda}a).$
This cannot hold since at least one of the polynomials on the right-hand side is not a constant polynomial.\\
If $\deg (g)<\deg (d_-)$, then $\deg (d_-)=\alpha$ and $\beta=3\alpha$, so $\alpha>0$ and the same conclusion can be derived.

Now, it remains to observe $\beta=\gamma$. From $\gamma\geq 2\alpha+\beta$ we have $\alpha=0$. From Remark \ref{Rem_dminus} we know that $d_-=0$ or $d_-=a=\pm 2i$. If $d_-=0$, then $c=a+b\pm 2r$ and $s=\pm(a\pm r)$. From congruence (\ref{kong_dokaz_2a}) we conclude that there exists $k\in\mathbb{Z}[i]$ such that
\begin{equation}
\label{jdba_2_a_kraj}
\pm 4 am(m+1)\pm 2(a\pm r)(2m+1)\mp 4 bn^2-(2a+2b)n=k(a+b\pm 2r).
\end{equation}
By observing a leading coefficient (next to $b$)  on both sides of this equation, we get
$\mp4 n^2- 2n=k,$
which implies that $k$ is an even integer. Inserting that equality into (\ref{jdba_2_a_kraj}) leaves a new equation with a leading coefficient next to the polynomial $r$ yielding 
$\pm (2m+1)= k,$
meaning $k$ is an odd integer, which is a contradiction. In the last case, we have $d_-=a=\pm 2i$, $c=-b+2a$ and $s=\pm ir$. Again, we conclude that there exists $k\in\mathbb{Z}[i]$ such that
\begin{equation}
\label{jdba_2_a_kraj_2}
\pm 8 im(m+1)\pm 2ir(2m+1)\mp4 bn^2-2bn=k(-b\pm 4i).
\end{equation}
By observing the leading coefficients next to $b$ and $r$, we have $\mp4 n^2- 2n=-k$ and $\pm 2i(2m+1)=0$, which cannot hold.
\medskip 

We will reduce the case \textit{4.a)} for $\beta<\gamma$ to this case so we must also observe option $\gamma <2\alpha +\beta$. We have $\alpha>0$ and $\deg (d_-)<\alpha$.  If $\beta>\frac{\alpha+\gamma}{2}$, observing coefficients next to $b$ in (\ref{jdba_dokaz_2a}) implies $\pm 4n^2+2n=0$, which doesn't have a solution in positive integers. If $\beta<\frac{\alpha+\gamma}{2}$, then (\ref{jdba_dokaz_2a}) yields $2m+1=0$, which cannot hold. It remains to observe the case $\beta=\frac{\alpha+\gamma}{2}=\deg (s)$. We have $\gamma=2\beta- \alpha$, so $\alpha<\beta<3\alpha$. 
On the other hand, from (\ref{ineq20}) we get
$$\deg (z_1)=\deg (s)=\beta\leq \frac{3\gamma-\beta}{4}=\frac{5\beta-3\alpha}{4},$$
which implies $\beta\geq 3\alpha$, a contradiction.


\medskip 
\emph{Case 2.b)} $v_{2m+1}=w_{2n},\ (z_0,z_1)\in\{( s, 2),(-s,-2)\}$ and $\alpha=0$.\\
We also have $x_0=0$, $a=\pm 2i$ and $y_1=2$.  From Lemma \ref{lemma 5.1}, after using $s^2=ac+4$ and dividing by $c$ we have
\begin{equation}\label{kong_2_b}
     \pm a \pm 2 am(m+1)\equiv \pm 2bn^2+ 2tn\ (\bmod \ c).
\end{equation}
If $\beta<\gamma$ this congruence is equality so we get $n=0$ and $2m(m+1)=1$ which cannot hold. On the other hand, if $\beta=\gamma$, we have two possibilities from Remark \ref{Rem_dminus}. \\
First, we can have $d_-=0$, $c=a+b\pm 2r$ and $t=\pm(b\pm r)$. As before, we conclude that there exists $k\in\mathbb{Z}[i]$ such that congruence (\ref{kong_2_b}) gives equality
$$\pm a \pm 2 am(m+1)-(\pm 2bn^2\pm 2(b\pm r)n)=k(a+b\pm 2r).$$
By observing leading coefficients as before, we get $k=\pm 2n^2\pm 2n$ and $0=\pm 2n\pm 2k$ which cannot hold. \\
Second, we have a possibility that $a=\pm 2i$, $c=-b+2a$ and $t=\pm i(b-a)$. Similarly as before, from existence of $k\in\mathbb{Z}[i]$ such that
$$\pm a \pm 2 am(m+1)-(\pm 2bn^2\pm 2i(b-a)n)=k(-b+2a),$$
we get $k=\pm 2n^2\pm 2in$. Inserting that in the previous equality yields an equation
$$\pm 1\pm 2m(m+1)\pm 2in=\pm 4n^2\pm 4in,$$
where we observe all combinations of the signs. Now we see that $n=0$, which implies $2m(m+1)=\pm 1$ but it cannot hold for an integer $m$.

\medskip
\emph{Case 2.c)} This case can be reduced to  case \textit{1.c)} by using Lemma \ref{lemma 4.4}.

\medskip
 \emph{Case 3.a)}  $v_{2m}=w_{2n+1}$, $(z_0,z_1)\in\{(t,2),(-t,-2)\}$ and $\beta<\gamma$. 
 Also, $x_0= r$, $y_1= 2$. From Lemma \ref{lemma 5.1}, after dividing by $c$, we get
 \begin{equation}\label{kong_3a}
   \pm atm^2+ rsm\equiv \pm btn(n+1)+ 2(2n+1)\ (\bmod \ c).  
 \end{equation}
We multiply the congruence (\ref{kong_3a}) by $t$, use the fact that $t^2\equiv 4 \ (\bmod \ c)$ and 
 Lemma \ref{lemma 5.2} to get
\begin{equation}\label{kong_3a_d}
     \pm 4am^2 + 2m(a+b-d_-)\equiv \pm 4bn(n+1)+2t(2n+1)\ (\bmod \ c).
\end{equation} From Lemma \ref{degreesd} we see that $d_-\neq 0$, so Lemma \ref{degdminuslemma} implies $0\leq \deg (d_-)=\gamma-\alpha-\beta<\gamma$. On the other hand, $\beta<\deg (t)=\frac{\beta+\gamma}{2}<\gamma$  implying that congruence (\ref{kong_3a_d}) is an equality
\begin{equation}\label{eq_3a_d}
    \pm 4am^2 + 2m(a+b-d_-)= \pm 4bn(n+1)+ 2t(2n+1).
\end{equation}
We assume $m,n\neq 0$. Now we observe degrees on both sides of that equation and conclude that since $2(2n+1)\neq 0$ then $\deg (t)=\det (d_-)=\frac{\beta+\gamma}{2}$. Since $\deg (d_-)=\gamma-\alpha-\beta$, we have $\gamma=3\beta+2\alpha$. 
Then also $\deg(at)=\deg(rs)=2\alpha+2\beta< \gamma$ and $\deg(bt)=3\beta+\alpha\leq \gamma$.
First, let us observe the case $\alpha\neq \beta$ and $\alpha\neq 0$. Then $\deg (at)<\deg(bt)<\gamma$ and  congruence (\ref{kong_3a}) is an equality. It implies $n(n+1)=0$, which cannot hold for an integer $n\geq 0$.

Now it remains to observe cases $\beta>\alpha=0$ and $\alpha=\beta$.
It is easy to see that $\deg (v_1)\in\{\gamma+\frac{\alpha+\beta}{2},\gamma-\frac{\alpha+\beta}{2}\}$ and $\deg (w_1)=\gamma$. Then from Lemma \ref{lema_stupnjevi} we get 
\begin{equation}\label{3a_st}
(2m-1)(3\alpha+3\beta)\pm(\alpha+\beta)=4n(2\beta+\alpha).
\end{equation}
If $\alpha=\beta$, we get $6m-3\pm 1=6n$ which cannot hold for integers $m$ and $n$. \\ If $\alpha=0$, we get $3m=4n+1$ or $3m=4n+2$ in (\ref{3a_st}). 
Also, we have $\gamma=3\beta$ and $\deg (t)=\deg (d_-)=2\beta=\alpha+2\beta$. That means that $\{a,b,d_-\}$ is a $D(4)$-triple that satisfies conditions of Lemma \ref{lemma 3.7} and, since $c=d_+(a,b,d_-)$, we have one of the options
\begin{enumerate}[i)]
    \item $d_-=r(r\pm a)(b\pm r)$ and $t=\pm(ab^2\pm abr+3b\pm r)$, or
    \item $a=\pm 2i$, $d_-=\mp 2ib^2-8b\pm 10 i$ and $t=\pm(\mp 2b^2+5ib\pm 2)$. 
\end{enumerate} 
If case i) holds, $d_-=ab^2+4b\pm arb\pm r^3+ar^2$. Now we compare leading coefficients (next to $ab^2$) in (\ref{eq_3a_d}) and get $m=2n+1$ which is a contradiction since $m,n\geq 0$ and $3m=4n+1$ or $3m=4n+2$. In case \textit{ii)} we also compare leading coefficients (next to $b^2$) and get $im=\pm(2n+1)$ which is an obvious contradiction. 

Let us emphasise that we didn't use $\gamma \geq \alpha+2\beta $ in the proof, so we can reduce case \textit{4.b)} to this one. 
 
\medskip
 \emph{Case 3.b)} $v_{2m}=w_{2n+1}$, $z_0=\pm s,\ z_1=\pm 2$, $\alpha=0$ and $\beta=\gamma$. \\
We have $x_0^2=a^2+4$, so $x_0=0$ and $d_0=a=\pm 2i$ by (\ref{jdb_jednakosti_za_d}). Also $c=a+b\pm 2r$ and $y_1=2$.  From Lemma \ref{lemma 5.1} we have
 \begin{equation}\label{kong_3b}
 \pm2s \pm casm^2\equiv \pm2t+c(\pm btn(n+1)+2(2n+1))\ (\bmod \ c^2).
 \end{equation}
 It holds that $2(s\pm t)=2(a-b)$ or  $2(s\pm t)=\pm 2(a\pm r+b\pm r)=\pm 2c$. In the first case, there would exist $k\in\mathbb{Z}[i]$ such that $2(a-b)=k\cdot (a+b\pm 2r)$. But since $0=\alpha<\deg (r)<\beta$ we would have $k=- 2$ and $k=0$, a contradiction. In the second case, we replace  $\pm 2(s\pm t)$ with $\pm 2c$ and divide (\ref{kong_3b}) with $c$ and get a congruence 
  \begin{equation}
 \pm2 \pm asm^2\equiv \pm btn(n+1)+2(2n+1)\ (\bmod \ c).
 \end{equation}
 Since, in this case, $st\equiv \pm 4\ (\bmod \ c)$, if we multiply previous congruence with $s=\pm(a\pm r)$ we get  that equality
  \begin{equation}\label{jednakost_3b}
 \pm2(a\pm r) \pm 4am^2 \pm 4bn(n+1)\pm 2(2n+1)(a\pm r)=k(a+b\pm 2r)
 \end{equation}
 holds for a $k\in\mathbb{Z}[i]$.
 Since $0=\alpha<\deg (r)<\beta$, we compare polynomials on both sides and get a system
 \begin{align*}
     \pm 4n(n+1)&=k\\
      \pm 2\pm 2(2n+1)&= 2k\\
      \pm 2\pm 4m^2\pm 2(2n+1)&=k.
 \end{align*}
 Again we assume $m,n\neq 0$, which implies that this system doesn't have solutions in positive integers.

\medskip
 \emph{Case 3.c)} This case can be reduced to case \textit{1.c)} by using Lemma \ref{lemma 4.4}. Specially, Case \textit{3.c)} when $\alpha=0$ reduces to  case \textit{1.b)}.
 
 \medskip
 \emph{Cases 4.a), 4.b) and 4.c)} As mentioned earlier, case \textit{4.c)} can be reduced to  case \textit{1.c)}. Also case \textit{4.a)} for $\beta<\gamma$ can be reduced to \textit{2.a)} and the case \textit{4.b)} $\beta<\gamma$ can be reduced to case \textit{3.a)} It remains to prove special subcases (arising from $\beta=\gamma$) of \textit{4.a)} and \textit{4.b)}.
 
 In \textit{4.a)i)} and \textit{4.b)i)} we have $(z_0,z_1)=(\pm 2,\pm 2)$ and $z_0=z_1$, $x_0= 2$, $y_1= 2$. Also, from Lemma \ref{degsc} we know $c=a+b\pm 2r$, $s=\pm(r\pm a)$, $t=\pm(b\pm r)$. Notice that $c=\pm(t\pm s)$, i.e. $t\equiv \pm s \ (\bmod \ c)$, and also $st\equiv \pm s^2\equiv \pm 4 \ (\bmod \ c)$.  From Lemma \ref{lemma 5.1} we get
 \begin{equation}\label{eq_4_2}
 \pm2(t-s) \equiv  c(\pm (btn(n+1)-asm(m+1))+ 2(2n+1)- 2(2m+1))\ (\bmod \ c^2).
 \end{equation}
 
 Let us first observe the case $t\equiv -s \ (\bmod \ c)$.
 Since the right hand side of the congruence (\ref{eq_4_2}) is divisible by $c$, we conclude that $c$ divides $\pm2(t-s)$, i.e.
$\pm 2(t-s)\equiv 0\ (\bmod \ c),$
so $t\equiv -s \ (\bmod \ c)$ implies
$\pm 4s\equiv 0\ (\bmod \ c).$
Then from $s^2=ac+4$, we get that $c$ divides $64$, which cannot hold since $\gamma>0$.

If $t\equiv s\ (\bmod \ c)$, we have, more precisely, that $c=\pm(t-s)$, so we can divide (\ref{eq_4_2}) by $c$ and get
\begin{equation}\label{rq_4_a_b}
\pm 2\equiv \pm (btn(n+1)-asm(m+1))+ 2(2n+1)- 2(2m+1)\ (\bmod \ c).
\end{equation}
If $\alpha=\beta=\gamma$, as in case \emph{1.a)} we get $m=n$ so  we have
\begin{equation}\label{kong_4a}
\pm 2\equiv \pm n(n+1)(bt-as)\ (\bmod \ c).
\end{equation}
Observe that for $c=a+b\pm 2r$ we have $bt=ct-at\mp 2rt\equiv -at\mp 2rt\ (\bmod \ c)$, which together with $t\equiv s \ (\bmod \ c)$ and $st\equiv\pm 4 \ (\bmod \ c)$ yields
$$bt-as\equiv -a(t+s)\mp 2rt\equiv -2at\mp 2rt\equiv \pm 8\ (\bmod \ c).$$
So, from congruence (\ref{kong_4a}) we conclude that $c$ divides a constant polynomial, which is possible only for polynomial $0$, i.e. 
$\pm 2 \pm 8 n(n+1)=0.$
This equation doesn't have a solution in integers. 
\\
If $\alpha<\beta$ then $\alpha<\deg (r)<\beta$. We multiply (\ref{rq_4_a_b}) with $s$ and get
\begin{equation*}
\pm 2s\equiv \pm4 (bn(n+1)-am(m+1))+s( 2(2n+1)- 2(2m+1))\ (\bmod \ c).
\end{equation*}
Then there exists $k\in\mathbb{Z}[i]$ such that
\begin{align*}
\pm 2(r\pm a)= \pm4&(bn(n+1)-am(m+1))
\\&\pm(r\pm a)( 2(2n+1)- 2(2m+1))+k(a+b\pm 2r).
\end{align*}
When we compare leading coefficients (next to $b$), we get $k=\pm 4n(n+1)\in\mathbb{Z}$. Inserting that in the previous equation cancels the polynomial $b$ and we can compare the leading coefficients next to $r$, divide by $2$ and get
$\pm 1=\pm 2(n-m)\pm k.$
This cannot hold since there is an odd integer on the left-hand side and an even integer on the right-hand side of the equation.

 In \textit{4.a)ii)}  we observe degrees in $v_{2m+1}=w_{2n+1}$ as described in Lemma \ref{lema_stupnjevi} and get $m=2n\pm\frac{1}{2}$, which is an obvious contradiction since $m$ and $n$ are integers. 
 
 In \textit{4.b)ii)},
 from Lemma \ref{lemma 5.1} we see that 
 $\pm s^2\equiv \pm 2t\ (\bmod \ c)$.
 Notice that $t=\pm i(b-a)=\pm ic\pm 4$, hence $t\equiv 4\ (\bmod \ c)$. This yields $\pm 4\equiv \pm 8 \ (\bmod \ c)$ which implies that $c$ divides a constant polynomial, which is not possible.
 \end{proof}


For further research, one can consider the problem of the existence of $D(n)$-$m$-tuples in other rings of polynomials, such as polynomials with real or complex coefficients. In \cite{djint} it is proved that the size of a set of polynomials with complex coefficients having the property that the product of any two coefficients plus 1 is a perfect square is bounded above by $7$. However, it is not clear what the expected true upper bound is. It would be interesting to study the analogous problem for polynomial $D(4)$-$m$-tuples with real or complex coefficients and compare these results with those obtained in the case of polynomial Diophantine $m$-tuples.

 
 \section*{Acknowledgements}
The authors are supported by the Croatian Science Foundation, grant HRZZ-IP-2018-01-1313.

\bigskip 

\noindent Marija Bliznac Trebje\v{s}anin\\
{Faculty of Science \\University of Split\\ 21 000 Split\\ Croatia}
\\{marbli@pmfst.hr}
\bigskip 

\noindent Sanda Buja\v{c}i\'{c} Babi\'{c}\\
{Faculty of Mathematics\\University of Rijeka\\ 51 000 Rijeka\\ Croatia}
\\ 
{sbujacic@math.uniri.hr}


\end{document}